\newcommand{\F}{\mathscr{F}}
\newcommand{\LST}{\mathscr{L}}
\newcommand{\E}{\mathscr{E}}
\newcommand{\N}{\mathbb{N}}
\newcommand{\ta}{\mathscr{T}}
\newcommand{\Le}{\left}
\newcommand{\Ri}{\right}
\newcommand{\NZ}{\mathbb{N}_0}
\newcommand{\Span}{{\rm span}}
\newcommand{\ad}{{\rm ad}}
\newcommand{\rank}{{\rm rank}}
\newcommand{\0}{\mathbf{0}}
\newcommand{\x}{\mathbf{x}}
\newcommand{\y}{\mathbf{y}}
\newcommand{\m}{\mathbf{n}}
\newcommand{\n}{\mathbf{n}}
\newcommand{\ie}{{\em i.e.,} }
\newcommand{\eg}{{\em e.g.,} }
\newcommand{\hot}{{\rm h. o. t. }}
\newcommand{\ddx}{\frac{\partial}{\partial x}}
\newcommand{\ddy}{\frac{\partial}{\partial y}}
\newcommand{\ddz}{\frac{\partial}{\partial z}}
\newtheorem{thm}{Theorem}[section]
\newtheorem{cor}[thm]{Corollary}
\newtheorem{lem}[thm]{Lemma}
\newtheorem{prop}[thm]{Proposition}
\theoremstyle{definition}
\newtheorem{defn}[thm]{Definition}
\newtheorem{rem}[thm]{Remark}
\numberwithin{equation}{section}
\def\Blem {\begin{lem}}
\def\Elem {\end{lem}}
\def\be {\begin{equation}}
\def\ee {\end{equation}}
\def\ba {\begin{eqnarray}}
\def\ea {\end{eqnarray}}
\def\bes {\begin{equation*}}
\def\ees {\end{equation*}}
\def\bas {\begin{eqnarray*}}
\def\eas {\end{eqnarray*}}
\def\bpr {\begin{proof}}
\def\epr {\end{proof}}
\begin{document}
\baselineskip=18pt
\renewcommand {\thefootnote}{ }

\pagestyle{empty}

\begin{center}
\leftline{}
\vspace{-0.500 in}
{\Large \bf Bifurcation control and universal unfolding for Hopf-zero singularities with leading solenoidal terms } \\ [0.3in]

{\large Majid Gazor$^{a,b}$\footnote{$^a\,$Corresponding author. Phone: +98(31) 33913634; Fax: +98(31) 33912602; Email: mgazor@cc.iut.ac.ir; n.sadri@math.iut.ac.ir.} and Nasrin Sadri\(^{a}\)}

\vspace{0.105in} {\small {\em $^{a}$Department of Mathematical Sciences,
Isfahan University of Technology
\\[-0.5ex]
Isfahan 84156-83111, Iran\\
$^{b}$School of Mathematics,
Institute for Research in Fundamental Sciences (IPM),\\
P.O. Box: 19395-5746, Tehran, Iran }}

\vspace{0.15in}

{ \bf { Dedicated to Professor James Murdock on the occasion of his 70th birthday }}

\vspace{0.05in}

\vspace{0.15in}



\vspace{0.05in}

\noindent
\end{center}

\vspace{-0.10in}

\baselineskip=16pt

\:\:\:\:\ \ \rule{5.88in}{0.012in}

\begin{abstract}

In this paper we introduce {\it universal asymptotic unfolding normal forms} for nonlinear singular systems. Next, we propose an approach to find the parameters of a parametric singular system that they play the role of the universal unfolding parameters. These parameters {\it effectively} influence the local dynamics of the system. We propose a systematic approach to locate local bifurcations in terms of these parameters. Here, we apply the proposed approach on Hopf-zero singularities whose the first few low degree terms are incompressible. In this direction, we obtain novel orbital and parametric normal form results for such families by assuming a nonzero quadratic condition. Moreover, we give a truncated universal asymptotic unfolding normal form and prove the finite determinacy of the steady-state bifurcations for two most generic subfamilies of the associated {\it amplitude} systems. We analyze the local primary bifurcations of equilibria and limit cycles, and the secondary Hopf bifurcation of invariant tori. The results are successfully implemented and verified using {\sc Maple}. By employing the proposed approach, we design an effective multiple-parametric quadratic state feedback controller for a singular system on a three dimensional central manifold with two imaginary uncontrollable modes. We illustrate how our program systematically identifies the distinguished (universal unfolding) parameters, derives the estimated transition varieties in terms of these parameters, and locates the local primary and secondary bifurcations of equilibria, limit cycles and invariant tori. This approach is useful in designing efficient nonlinear feedback controllers (single or multiple inputs) for local bifurcation control in engineering problems.

\vspace{0.10in} \noindent {\it Keywords:} \ Bifurcation control; universal asymptotic unfolding; primary and secondary bifurcations; Hopf-zero singularity; solenoidal vector fields.

\vspace{0.10in} \noindent {\it 2010 Mathematics Subject Classification}:\, 34C20; 34A34.

\noindent \rule{5.88in}{0.012in}
\end{abstract}

\vspace{0.2in}

\section{Introduction }

Any small perturbation of a singular differential system may substantially change the qualitative dynamics of the system. Therefore for such engineering singular problems, a practical approach is to design a controller to inhibit its real world dynamics rather than being dominated by the mathematical modeling imperfections. This can be achieved by computing the {\it universal unfolding} of the differential system and be used for a {\it bifurcation controller design}. Bifurcation control has many applications in engineering problems such as power, electronics, and mechanical systems; predicting and preventing voltage collapse and oscillation in power networks, high-performance circuits and oscillator designs; \eg see \cite{ChenBifuControl,ChenBifControl2000}. The idea here is to design a controller for a nonlinear system so that the system follows a certain bifurcation branch and thus, it behaves as desired. Recently, normal form theory has been used for local bifurcation control; see \cite{ChenBifuControl,KangKrener,KangIEEE,Kang98,Kang04}. In this paper we describe how parametric normal form theory can propose effective feedback controller designs for an engineering problem. We apply it to Hopf-zero singularity, \ie
\be\label{Eq1}
\dot{x}:= f(x, y, z), \quad \dot{y}:=z+ g(x, y, z), \quad \dot{z}:=-y+ h(x, y, z),
\ee for \((x,y,z)\in \mathbb{R}^{3},\) where \(f, g, h\) do not have linear and constant terms. Additionally, we assume that certain first few low degree terms of equation \eqref{Eq1} constitute a solenoidal vector field (that is, the case \(I\) in \eqref{Cases}). Here, a solenoidal vector field refers to a volume-preserving vector field. Throughout this paper, we will interchangeably use the terms {\it vector field}, denoted by \(f\ddx+(z+g)\ddy+(-y+h)\ddz\) or \((f, z+g, -y+h),\) and the {\it differential system} \eqref{Eq1}.

\pagestyle{myheadings} \markright{{\footnotesize {\it M. Gazor and N. Sadri \hspace{2.2in} Bifurcation control and universal unfolding  }}}

Normal form computations of vector fields can not be performed up to an infinite degree Taylor expansion and thus, truncation at a finite degree is unavoidable. When the truncated normal form and the original system have the same {\it qualitative properties}, the system is called {\it finitely determined}; see
\cite[Page vi]{MurdBook}. We recall that {\it qualitative properties} are defined as invariances of a given equivalence relation. Thus, a system is finitely
determined when it is equivalent to one of its truncated normal forms; \eg see \cite[Fold bifurcation (3.1), Lemma 3.1]{Kuznetsov} for an easy example.
The dynamics of a Hopf-zero singular system given by equation \eqref{Eq1} may not be finitely determined with respect to topological equivalence (see \cite[Definition 2.15]{Kuznetsov} and \cite[Page 191]{ChowBook}) and many dynamical properties such as {\it heteroclinic orbit breakdowns} and {\it \v{S}il'\'{n}ikov bifurcations} can not be detected through truncated normal form computations; \eg see
\cite{DumortierHopfZ,SearaHopfZero,BroerHopfZero}. However, normal form computations are still useful for the analysis of
{\it finitely determined dynamical properties}. Finite determinacy naturally raises notions of {\it \(n\)-equivalence relation} and the {\it \(n\)-universal asymptotic unfolding} defined by Murdock \cite{MurdBook,Murd98}.

Murdock \cite{MurdBook,Murd98} defines two systems as {\it \(n\)-equivalent} when they share all their \(n\)-jet (\(n\)-degree Taylor expansion) determined properties. The \(n\)-asymptotic unfolding is defined based on the \(n\)-equivalence relation and it seems the most natural way of defining a versal unfolding amenable to computation and normal form analysis; \eg see \cite[Theorem 1]{Murd09}. We slightly modify versal asymptotic unfolding and call it {\it versal asymptotic unfolding normal form}, that is, a versal asymptotic unfolding for our simplest (orbital) normal form system. This also exhibits the \(n\)-jet determined properties of all small perturbations of the original system.

In order to illustrate the dynamics invariant (or those not invariant) under \(n\)-equivalence relation, in subsection \ref{SubsecFinite}, we prove that bifurcations and stabilities of equilibria and small limit cycles for a generic subfamily are invariant under \(2\)-equivalence relation while the secondary Hopf bifurcation of invariant torus is invariant under \(3\)-equivalence relation. Moreover, two more degenerate subfamilies are also discussed.
The proof uses contact-equivalence relation (see \cite[Page 166]{GolubitskyStewartBook} and \cite{GazorKazemi}) and is based on computations of certain modules, called {\it high order term} modules. High order term module of a map \(v\) is a module over the ring of all scalar maps (germs) and is generated by all monomial vectors like \(p\) so that \(v+p\) is contact-equivalent to \(v.\)
We further discuss bifurcation of a heteroclinic cycle surrounding a continuous family of invariant tori for a \(2\)-jet normal form system; see subsection \ref{BifAnala}. Detection of the saddle-saddle connection simply uses the first integral of the truncated normal form and demonstrates an important advantage of our normal form representation in conservative-nonconservative terms. This heteroclinic cycle is not invariant under \(2\)-equivalence relation and thus, it will not be pursued in our bifurcation control; also see \cite[Page 225]{LangfHopfSteady}.

A parametric vector field \(v(\x, \mu)\) (for \(\mu\in \mathbb{R}^p\) from a small neighborhood of the origin and \(\x\in \mathbb{R}^3\)) is called a {\it perturbation} or an {\it unfolding} for \(w(\x)\) when \(v(\x, \0)= w(\x)\). In other words, \(v(\x, \mu)= w(\x)+p(\x, \mu)\) where \(p(\x, \0):= \0.\)
The parameterized family of perturbations
\bes v(\x, \mu):= w(\x)+ \sum_{\{i\,|\, |\m_i|\leq n\}} \mu_{i}\x^{\m_i}, \hbox{ for } \m_i=(m_1, m_2, m_3)\in (\mathbb{N}\cup\{\0\})^3, |\m_i|=m_1+m_2+m_3, \ees is called the \(n\)-{\it jet general perturbation} for \(w(\x)\), that is \(w(\x)\) plus all monomial perturbations of degree less than or equal to \(n.\) In order to simplify such systems, we consider a smooth locally invertible change of state variables
\be\label{Changx}\x=\phi(\y, \mu), \hbox{ where the Jacobian } D_\y\phi \hbox{ at the origin is invertible and } \phi(\0, \mu)=\0, \ee
and time rescaling
\be \label{Tchang}\tau= T(\y, \mu) t, \hbox{ with } T(\0, \0)\neq0, \ee
where \(t\) and \(\tau\) denote old and new time indices. Using \eqref{Changx} and \eqref{Tchang}, we may transform \(v(\x, \mu)\) into an {\it orbitally equivalent vector field} as
\be\label{Orb}\tilde{v}(\y, \mu):=T(\y, \mu)(D_\y\phi)^{-1} v(\phi(\y, \mu), \mu).\ee
Parametric vector fields will be considered modulo an equivalence, a transformation described above in \eqref{Orb}.
The idea is to express such family into its simplest normal form; that is, an equivalent vector field with the least possible number of monomial {\it terms} in its \(n\)-degree truncated (\(n\)-jet) Taylor-expansion for any \(n\). It is known that the {\it simplest} orbital normal form is {\it unique} (\ie the normalized coefficients are uniquely determined in terms of the original system) when a {\it formal basis style} is chosen.

We recall that a {\it style} is a rule on how to choose a complement to the {\it removable space}, and the {\it removable space} is defined as the space spanned by all terms that can be simplified from the system using the equivalence \eqref{Orb}. Thus, normal form style determines what terms are simplified and what terms shall remain in the normal form system. The inner product, semisimple, \(\mathfrak{sl}(2)\), simplified and formal basis styles are among the main examples; see \cite[Page ix]{MurdBook} and \cite{Murd16,GazorYuSpec}. A {\it formal basis style} basically determines the priority of elimination between different alternative omittable terms. More precisely, in normal form computations there exist alternative monomial terms for elimination from a given vector field and the rule on how to choose these alternative terms is called {\it formal basis style}. Formal basis style is determined by an ordering on a formal basis for all vector fields (\eg all monomial vector fields) so that those succeeding others are in priority of elimination; see \cite{GazorYuFormal,GazorYuSpec}. We assume that a formal basis style has been fixed.

From now on, we use the word {\it ``equivalence''} instead of the word {\it ``orbital equivalence''} given by equation \eqref{Orb}, unless it is stated otherwise. Recall that an \(n\)-jet of a vector field refers to its \(n\)-degree Taylor expansion.

\begin{defn}
A parametric vector field \(v(\x, \nu)\) is called an \(n\)-{\it versal asymptotic unfolding normal form} for \(w(\x)\) if
\begin{enumerate}
  \item[a.]\label{itm1} for each small perturbation \(w(\x)+p(\x, \epsilon)\) of \(w(\x),\) there exists
  a polynomial map \(\nu(\epsilon)\) such that \(\nu(\0)=\0\) and \(w(\x)+p(\x, \epsilon)\) is equivalent to \(v(\x, \nu(\epsilon))\) modulo degrees of higher than or equal to \(n+1\).
  \item[b.] the \(n\)-jet of \(w(\x, \0)\) is the \(n\)-jet of the simplest orbital normal form for \(v(\x)\).
\end{enumerate}
We call a parametric system \(v(\x, \nu),\) an {\it \(n\)-universal asymptotic unfolding normal form} for \(w(\x)\) when
\begin{itemize}
  \item \(v(\x, \nu)\) is an \(n\)-versal asymptotic unfolding normal form for \(w(\x)\).
  \item The \(n\)-jet of \(v(\x, \nu)\) is the \(n\)-jet of the simplest orbital normal form for the \(n\)-jet general perturbation of \(w(\x)\).
\end{itemize} Then, we refer to the parameter \(\nu\) by {\it universal unfolding parameter}; also see \cite{DumortierSingularities,AnosovArnold} and \cite[Definition 2.18]{Kuznetsov}.
\end{defn}

\begin{thm}
For any vector field \(w(\mathbf{x})\) and any given natural number \(n\), there always exists an \(n\)-universal asymptotic unfolding normal form \(v(\x, \nu).\) Besides, for any perturbation \(w(\x)+p(\x, \epsilon)\), the map \(\nu(\epsilon)\) in item (a) is unique modulo degrees that do not affect \(n\)-jet of  \(v(\x, \nu)\).
\end{thm}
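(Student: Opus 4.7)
The plan is to construct $v(\mathbf{x},\nu)$ by applying the parametric simplest orbital normal form procedure to the $n$-jet general perturbation, and then to recover an arbitrary perturbation as a specialization of it. Concretely, first form
\begin{equation*}
G(\mathbf{x},\mu):=w(\mathbf{x})+\sum_{\{i\,|\,|\mathbf{m}_i|\leq n\}}\mu_i\,\mathbf{x}^{\mathbf{m}_i},
\end{equation*}
a family depending on finitely many scalar parameters $\mu=(\mu_i)$ with $G(\mathbf{x},\mathbf{0})=w(\mathbf{x})$. Apply the parametric equivalence \eqref{Orb}, with $\phi(\mathbf{y},\mu)$ and $T(\mathbf{y},\mu)$ chosen as formal power series in $(\mathbf{y},\mu)$ vanishing appropriately at the origin, and reduce $G$ degree-by-degree in $\mathbf{y}$. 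At each degree $k\leq n$ the homological equation decomposes the grade-$k$ parametric terms into a removable part (absorbed into $\phi,T$) and a complementary part selected by the fixed formal basis style. This produces a uniquely determined simplest parametric normal form $v(\mathbf{y},\nu)$, in which the surviving parameters are polynomials $\nu=\nu(\mu)$ with $\nu(\mathbf{0})=\mathbf{0}$. By construction the $n$-jet of $v(\mathbf{x},\nu)$ coincides with the $n$-jet of the simplest orbital normal form of the $n$-jet general perturbation, establishing condition (b).

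To verify condition (a), let $w(\mathbf{x})+p(\mathbf{x},\epsilon)$ be an arbitrary small perturbation. Taylor-expand $p$ in $\mathbf{x}$ up to degree $n$; the coefficient of $\mathbf{x}^{\mathbf{m}_i}$ is a smooth function of $\epsilon$ vanishing at $\epsilon=\mathbf{0}$, which we approximate by a polynomial $\mu_i(\epsilon)$ to an order high enough to match any prescribed finite precision in $\epsilon$. Then
\begin{equation*}
w(\mathbf{x})+p(\mathbf{x},\epsilon)\,\equiv\,G(\mathbf{x},\mu(\epsilon))\quad\text{modulo degrees}\ \geq n+1\text{ in }\mathbf{x}.
\end{equation*}
Substituting $\mu=\mu(\epsilon)$ into the parametric transformations $\phi,T$ built in the first step yields an equivalence transforming $G(\mathbf{x},\mu(\epsilon))$ into $v(\mathbf{x},\nu(\mu(\epsilon)))$, again modulo the same tail. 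Setting $\nu(\epsilon):=\nu(\mu(\epsilon))$ gives the required polynomial map with $\nu(\mathbf{0})=\mathbf{0}$.

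For uniqueness, the formal basis style pins down a unique complement to the removable space at each degree, so the coefficients of the truncated normal form $v(\mathbf{x},\nu)$ are uniquely determined by $\nu$ up to order $n$. Any two polynomial maps $\nu(\epsilon),\tilde\nu(\epsilon)$ satisfying (a) therefore produce the same $n$-jet of $v$, which forces $\nu$ and $\tilde\nu$ to agree modulo those monomials in $\epsilon$ that act trivially on the $n$-jet of $v(\mathbf{x},\cdot)$. The hardest part of the plan is not the bookkeeping above but the underlying parametric reduction step: one must ensure that the combined action of $\phi$ and $T$ on parametric terms — which mixes contributions from derivatives of $\phi$ in $\mathbf{y}$, time rescalings, and the already-normalized lower-degree tail — produces the same removable subspace and the same style-selected complement as in the unparametric case, and that the dependence on $\mu$ stays polynomial throughout. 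This is the content of the parametric simplest normal form theorem in Murdock's framework \cite{MurdBook,Murd09}, which we invoke here after verifying its hypotheses for the equivalence relation \eqref{Orb}.
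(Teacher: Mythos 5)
Your overall route is the same as the paper's: both arguments rest on the existence and uniqueness of the simplest (parametric) orbital normal form of the $n$-jet general perturbation, with the surviving coefficients giving the polynomial map $\nu$, and uniqueness of $\nu(\epsilon)$ inherited from uniqueness of the simplest normal form coefficients. (The paper invokes the orbital version of [GazorYuSpec, Lemma~4.3] where you invoke Murdock's framework, but that is the same black box.)

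There is, however, one genuine gap. When you write that ``by construction the $n$-jet of $v(\x,\nu)$ coincides with the $n$-jet of the simplest orbital normal form of the $n$-jet general perturbation, establishing condition (b),'' you are verifying the second bullet of the \emph{universal} definition, not item (b) of the \emph{versal} definition, which requires that $v(\x,\0)$ have as its $n$-jet the simplest orbital normal form of $w(\x)$ itself. That property is not automatic for an arbitrary fixed formal basis style: in the parametric homological equation at a given grade there may be alternative terms available for elimination, one $\mu$-independent and one $\mu$-dependent, and if the style elects to remove the $\mu$-dependent one, a $\mu$-independent term survives in $v(\x,\nu)$ that would have been removed in the non-parametric normalization of $w$ alone. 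Then $v(\x,\0)$ is a normal form of $w$ but not the \emph{simplest} one, and item (b) fails. The paper closes this by stipulating that the formal basis style must give priority of elimination to parameter-independent terms over $\mu$-dependent ones; your proof needs the same stipulation (or an argument that your fixed style already has this property). Aside from this, your handling of (a) --- polynomially approximating the $\epsilon$-dependent Taylor coefficients of $p$ and composing with the parametric transformations --- is fine, since discrepancies of high order in $\epsilon$ are absorbed into the ``modulo degrees that do not affect the $n$-jet'' clause.
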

\bpr As we have claimed in \cite{GazorYuSpec}, the orbital normal form version of \cite[Lemma 4.3]{GazorYuSpec} also holds. Hence once a formal basis style is chosen, the simplest orbital normal form of the \(n\)-jet general perturbation of \(w(\x)\) exists and thus, its \(n\)-jet gives rise to \(n\)-universal asymptotic unfolding normal form \(v(\x, \nu).\) The orbital normal form of a given perturbation \(w(\x)+p(\x, \epsilon)\) readily gives the polynomial map \(\nu(\epsilon)\) in item (a). In order to achieve item (b), one needs to choose formal basis style appropriately. In fact the priority of elimination should be effectively given to parameter-independent terms rather than those terms depending to \(\mu_i\). The claim about uniqueness of \(\nu(\epsilon)\) directly follows from the uniqueness of simplest orbital normal form coefficients.
\epr

The universal asymptotic unfolding normal form facilitates the (finitely determined) local bifurcation analysis in terms of the unfolding parameters. However in practical engineering problems, the mathematical models are mostly involved with parameters such as control parameters; see \cite{ChenBifuControl}. Therefore, a practically useful approach needs to locate the bifurcations in terms of the original (control) parameters of a parametric (control) system. This has been rarely performed in the existing normal form literature; see \cite{YuNonlinearity} and
\cite[Pages 99-126]{ChenBifuControl} for Hopf bifurcation control. Hence, a useful normal form analysis needs to compute the relations between the unfolding parameters and the parameters of the original system. This reveals the impact of control parameters on our nonlinear control system. More details have been given in section \ref{secPNF} and the results have successfully been implemented for Hopf-zero singularities with dominant solenoidal terms in our {\sc Maple} program. In order to achieve this goal, we first need to derive orbital normal form of equation \eqref{Eq1} and then, parametric normal forms of its (small) multiple-parametric perturbations (perturbations depending on several variables).

The only existing normal form literature on hypernormalization (simplification beyond classical normal forms) of Hopf-zero singularity is due to the authors of \cite{ChenHopfZ,YuHopfZero,AlgabaHopfZ}. All of these papers have assumed two non-zero quadratic conditions (along with other non-resonance conditions for degrees higher than two) given by
\be\label{Alg} f_{xx}(\0)\left(g_{yx}(\0)+h_{zx}(\0)\right) \neq0. \ee This paper aims to complete the results on hypernormalization of Hopf-zero singularity with non-zero quadratic part. Therefore, throughout this paper we assume a non-zero condition as
\be\label{a01}f_{yy}(\0)+f_{zz}(\0)\neq0,\ee also see \cite{GazorMokhtari}. Note that right hand side of inequalities \eqref{Alg} and \eqref{a01} represent certain coefficients from the classical normal forms. Equation \eqref{Eq1} is equivalent to the following normal form (in cylindrical coordinates) (see \cite[Lemma 3.1]{GazorMokhtari})
\be\label{First2ndlevel}
\dot{x}= \rho^{2}+\sum^{\infty}_{i=2} a_{i} x^i, \quad \dot{\rho}=\sum^{\infty}_{i=0} b_{i} x^{2i+1}\rho, \quad \dot{\theta}= 1+\sum^{\infty}_{i=0} c_{i} x^{2i+1}.
\ee
We further assume that there exists \(a_{k}\neq0\) for some \(k.\) Now define
\be\label{sr}
r:=\min\{i\mid a_{i}\neq 0,i\geq 1\} , \quad s:=\min\{j\mid b_{j}\neq 0,j\geq 1\}.
\ee
In this paper we compute the orbital normal forms of the system \eqref{Eq1}, by assuming \eqref{a01} and
\be\label{rs} r<s,\ee
and also parametric normal forms for any of its multiple-parametric perturbations. Here, we assume that \(s<\infty.\) The case \(s=\infty\)
consists of all solenoidal Hopf-zero vector fields and is discussed in \cite{GazorMokhtariInt}. The results on the other two cases (\(r>s\) and \(r=s\)) are in progress and appear elsewhere.

Assuming that \eqref{a01} and \eqref{rs} hold, we prove in Theorem \ref{ONFs+1level} that the system \eqref{First2ndlevel} is equivalent to
\bes
\dot{x}=2\rho^2+ a_r x^{r+1}+\sum^{\infty}_{k=s}\beta_kx^{k+1}, \quad
\dot{\rho}=-\dfrac{a_r (r+1)}{2}x^r\rho+\frac{1}{2}\sum^{\infty}_{k=s}\beta_{k}x^k\rho, \quad
\dot{\theta}=1+ \sum^{r}_{k=1}\gamma_{k}x^k,
\ees where \(\beta_{k}=0\) for \(k\equiv_{2(r+1)}-1\)  and \(k\equiv_{2(r+1)}s.\)
Finally, we prove that any multiple-parametric perturbation of the system \eqref{First2ndlevel} (for \(r<s\)) can be transformed into the \((s+1)\)-th level parametric normal form (Theorem \ref{MainThm} part I)
\ba
\dot{x}&=&2\rho^2+a_{r\0}x^{r+1}+\sum_{-1\leq i< r+1}{a_{i\n}}x^{i+1}\mu^{\n}+\sum_{0\leq i < s,i\neq r}{b_{i\n}} x^{i+1}\mu^{\n}+\sum_{k=s}\beta_{k\n}x^{k+1}\mu^{\n},\\\nonumber
\dot{\rho}&=&-\dfrac{a_{r\0}(r+1)}{2}x^r\rho+\sum_{-1\leq i< r+1}{a_{i\n}}(\dfrac{i+1}{2})\rho\mu^{\n}+\sum_{0\leq i < s,i\neq r}\frac{1}{2}{b_{i\n}} x^{i}\rho\mu^{\n}+\sum_{k=s}\frac{1}{2}\beta_{{k\n}}x^{k}\rho\mu^{\n},\\\nonumber
\dot{\theta}&=&1+ \sum^{r}_{k=1}\gamma_{k\n}x^k\mu^{\n},
\ea
and its \(s+1\)-universal asymptotic unfolding is given by (Theorem \ref{MainThm} part II)
\ba
\dot{x}&=& 2\rho^2+a_rx^{r+1}+\beta_sx^{s+1}+\sum_{1\leq i\leq r}\nu_{i}x^{i-1}+\sum_{i=r+1}^{N}\nu_{i}x^{k_{i}+1}, \\\nonumber
\dot{\rho}&=& -\dfrac{a_{r}(r+1)}{2}x^r\rho+\frac{1}{2}\beta_sx^{s}\rho-\sum_{1\leq i\leq r}\frac{(i-1)}{2}\nu_{i}x^{i-2}\rho+\frac{1}{2}
\sum_{i=r+1}^{N}\nu_ix^{k_i}\rho,
\\\nonumber
\dot{\theta}&=&1+\sum^{r}_{i=1}(\gamma_i+\omega_{i})x^{i}.
\ea Here, \(N=r+s-\lfloor\frac{s}{2(r+1)}\rfloor,\) \(\n=(m_1,\ldots, m_p)\in(\N\cup\{0\})^p,\) the parameters \(\mu:=(\mu_1, \ldots, \mu_p)\in \mathbb{R}^p,\) \(\nu_i, \omega_{i}\in \mathbb{R},\) \(\mu^\n:= \mu_1^{m_1}\ldots\mu_p^{m_p},\) and the coefficients \(a_{i\n}, b_{i\n}, \beta_{i\n}, \gamma_{i\n}, \beta_s, \gamma_i\in \mathbb{R}.\)
The family associated with assumptions \eqref{a01} and \eqref{rs} are large enough so that they may appear near stagnation points associated with perturbations of incompressible fluid flows, three dimensional magnetic field lines and well-known systems such as Michelson system.

The rest of this paper is organized as follows. In section \ref{sec2} we provide the time rescaling structure and introduce our normal form style.
Section \ref{sec3} presents our orbital normal form results. Parametric and universal asymptotic unfolding normal forms for \(s<\infty\) are
given in section \ref{secPNF}. Normal forms in cylindrical coordinates have a phase component (\ie \(\dot{\theta}\)-component in equation \eqref{First2ndlevel}) associated with angle coordinate and it is a common approach to ignore the phase component for bifurcation analysis. The obtained planar system is called {\it amplitude system}. In section
\ref{SecBF} using contact-equivalence relation, we prove that steady-state bifurcations associated with the amplitude systems for \(r:=1\) and \(2\) are
finitely determined.
Then by assuming that \(r=1\) and \(2\) we discuss a limited (not a complete) bifurcation analysis of the universal asymptotic unfolding normal forms. In section \ref{SecBC} we explain how to identify the {\it parameters} of a parametric system, \ie those playing the role of asymptotic universal unfolding parameters. Roughly speaking, cognitive choices of these parameters effectively control finitely determined local dynamics of the system such as primary and secondary bifurcations of equilibria, limit cycles and invariant tori. We have implemented our approach in Maple so that it derives the transitions sets in terms of original parameters and symbolic coefficients. As far as our information is concerned this is new in the literature of both normal form and bifurcation theory.
Finally, section \ref{secExm} applies our approach to an illustrating example with two imaginary uncontrollable modes. Here, estimated transition sets are drawn in terms of the distinguished parameters and are supported with some numerical simulations. This demonstrates that our distinguished parameters can suitably control the local dynamics of a nonlinear Hopf-zero singular system and can be used for a possible engineering controller design.

\section{ Time rescaling structure and normal form style }\label{sec2}

We follow \cite{GazorMokhtari} and define the vector fields
\bas
F^{l}_{k}&:=&(k-l+1)x^{l+1}\rho^{2(k-l)}\frac{\partial}{\partial x}-\left(\frac{l+1}{2}\right)x^{l}\rho^{2k-2l+1}\frac{\partial}{\partial \rho},\\
E^{l}_{k}&:=&x^{l+1}\rho^{2(k-l)}\frac{\partial}{\partial x}+\frac{1}{2}x^{l}\rho^{2k-2l+1}\frac{\partial}{\partial \rho},\\
\Theta^{l}_{k}&:=&-x^{l}\rho^{2(k-l)}\frac{\partial}{\partial \theta},
\eas here the variables \(x, \rho\) and \(\theta\) represent cylindrical coordinates. Note that \(\rho\) does not take negative values and otherwise, it is singular at \(\rho=0\). Then, by \cite[Theorem 2.4]{GazorMokhtari} any Hopf-zero normal form system \eqref{First2ndlevel} in cylindrical coordinates can be expanded in terms of \(F, E, \) and \(\Theta\)-terms like
\be\label{Theta}
v:= \Theta^0_0+ \sum a_{i,j}F^i_j+ \sum b_{i,j}E^i_j+ \sum c_{i,j}\Theta^i_j.
\ee We denote \(\LST\) for the vector space generated by all Hopf-zero normal forms expanded with respect to \(F, E, \) and \(\Theta\)-terms.
The Lie algebra structure constants follow \cite[Lemma 2.3]{GazorMokhtari}.

We define a module structure for \(\LST\) that is instrumental for computing the effect of the near-identity time rescaling.
The integral domain of formal power series (denoted by \(\mathcal{R}\)) generated by monomials
\be\label{tmn}
Z^{m}_{n}:=x^{m} \rho^{2(n-m)}
\ee represents the space of time-rescaling generators; \(Z^m_n\) generates \(t:=(1+x^{m} \rho^{2(n-m)})\tau\), where \(t\) and \(\tau\) denote the
old and new time variables. Hence, \(\mathcal{R}:= \Span \{Z^m_n\,|\, m\leq n, m, n \in \NZ=\N\cup\{0\}\}\) acts on \(\LST\) and \(\LST\) is an \(\mathcal{R}\)-module.

\begin{lem}\label{structure} The \(\mathcal{R}\)-module structure constants associated with time rescaling are given by
\bas
Z^{m}_{n}F^{l}_{k}&=&\dfrac{k+2}{k+n+2}F^{l+m}_{k+n}+\dfrac{m(k+2)-n(l+1)}{k+n+2}E^{l+m}_{k+n},\\
Z^{m}_{n}E^{l}_{k}&=&E^{l+m}_{k+n},\\
Z^{m}_{n}\Theta^{l}_{k}&=&\Theta^{l+m}_{k+n}.
\eas
\end{lem}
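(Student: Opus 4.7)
The plan is a direct computation: since the time rescaling $t=(1+Z^m_n)\tau$ acts on any vector field $v$ infinitesimally by the scalar multiplication $v\mapsto Z^m_n v$, the module action on a generator $F^l_k$, $E^l_k$, or $\Theta^l_k$ is just the monomial multiplication $x^m\rho^{2(n-m)}$ applied to the two scalar components of the generator. The task is then to re-express the resulting vector field in the $(F,E,\Theta)$-basis at bidegree $(l+m,k+n)$.

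The cases $E^l_k$ and $\Theta^l_k$ are transparent: multiplying the two components of $E^l_k$ by $x^m\rho^{2(n-m)}$ yields $x^{l+m+1}\rho^{2(k+n-l-m)}\ddx+\tfrac12 x^{l+m}\rho^{2(k+n-l-m)+1}\frac{\partial}{\partial\rho}$, which is exactly $E^{l+m}_{k+n}$; similarly, multiplication distributes through $\Theta^l_k$ to give $\Theta^{l+m}_{k+n}$. In both cases no decomposition is needed because the two components have the ``matched'' ratio that characterizes a single $E$- or $\Theta$-term.

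For $F^l_k$ the situation is one dimension harder: $Z^m_n F^l_k$ has components
\[
(k-l+1)x^{l+m+1}\rho^{2(k+n-l-m)}\ddx-\tfrac{l+1}{2}x^{l+m}\rho^{2(k+n-l-m)+1}\frac{\partial}{\partial\rho},
\]
which lies in the two-dimensional subspace spanned by $F^{l+m}_{k+n}$ and $E^{l+m}_{k+n}$. Writing $Z^m_n F^l_k=\alpha F^{l+m}_{k+n}+\beta E^{l+m}_{k+n}$ and matching the two scalar coefficients yields the $2\times 2$ linear system
\begin{align*}
\alpha(k+n-l-m+1)+\beta &= k-l+1,\\
-\alpha(l+m+1)+\beta &= -(l+1).
\end{align*}
Subtracting gives $\alpha(k+n+2)=k+2$, and back-substitution gives $\beta=\tfrac{m(k+2)-n(l+1)}{k+n+2}$, matching the claimed formulas. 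The denominator $k+n+2$ never vanishes since $k,n\in\NZ$, so the decomposition is always well-defined and $F^{l+m}_{k+n},E^{l+m}_{k+n}$ are linearly independent (their coefficient matrix has determinant $\tfrac12(k+n+2)\neq 0$).

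There is no real obstacle: the proof is a short bookkeeping computation. The only judgement call is organisational, namely to treat $E$ and $\Theta$ first (where the monomial factor simply shifts the indices) and then reduce the $F$ case to the explicit $2\times 2$ linear algebra problem above, which incidentally explains why $F$ alone does not close under the $\mathcal{R}$-action and the $E$-term must appear.
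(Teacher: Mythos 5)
Your computation is correct: multiplying each generator by the monomial $x^m\rho^{2(n-m)}$ and, in the $F$ case, solving the $2\times 2$ system for the $(F^{l+m}_{k+n},E^{l+m}_{k+n})$-coordinates does yield $\alpha=\tfrac{k+2}{k+n+2}$ and $\beta=\tfrac{m(k+2)-n(l+1)}{k+n+2}$, exactly as stated. The paper gives no proof of this lemma at all (it is treated as a routine verification carried over from the cited earlier work), and your direct bookkeeping argument is precisely the intended one, with the added useful observation that the nonvanishing determinant $\tfrac12(k+n+2)$ guarantees the decomposition is well defined.
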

In this paper we merely apply the time rescaling space
\be\label{ta0} \ta:= \Span \{Z^m_n\in \ta\,|\, \hbox{ for } m= n, \hbox{ or } m+1=n\},\ee and use the formulas and identical notations from \cite{GazorMokhtari,GazorMoazeni}. Our computations and {\sc Maple} program suggest that other time rescaling generators (from \(\mathcal{R}\setminus\ta\)) do not simplify the system beyond what we present in this paper.

Any normal form computation requires a normal form style. Recall that for the cases with alternative terms for elimination, a formal basis style determines the priority of elimination. This is determined by an ordering on basis terms (\(F, E, \Theta\)-terms) of \(\LST\) in formal basis style; see
\cite[Page 1006]{GazorYuSpec}. To determine our ordering, we need a grading function \(\delta.\) The grading function decomposes the space \(\LST\) into \(\delta\)-homogeneous vector subspaces \(\LST_i\) spanned by terms of the same grade, say grade \(i\). The decomposition \(\LST=\sum \LST_i\) builds a grading structure for \(\LST\) and this must make \(\LST\) a graded Lie algebra, \ie \([\LST_i, \LST_j]\subseteq \LST_{i+j}\); see
\cite[Page 1006]{GazorYuSpec}. Assuming that \(\delta\) is given (\eg see equations \eqref{delta1} and \eqref{Pdelta}), for any \(v, w\in \{F^m_n, E_k^l, \Theta^p_q\}\) we define \(v\prec w,\) when
\begin{itemize}
  \item Style:
\(\left\{
    \begin{array}{ll}
    \delta(v)<\delta(w),&\\
      \delta(v)=\delta(w), & v=F^m_n \hbox{ and } w=E_k^l \hbox{ or } w=\Theta^p_q,\\
      \delta(v)=\delta(w), & v=E_k^l \hbox{ and } w=\Theta^p_q.
    \end{array}
  \right.\)
\end{itemize} This indicates that our priority of elimination is with low grade terms over higher grades and then, \(F\)-terms over \(E\)-terms.
We denote
$(a)^{k}_{b}:=a(a+b)(a+2b)\cdots(a+(k-1)b)$ for any natural number $k$ and real number $b,$ and for
any integer numbers \(m, n, p\) the notation \bes m\equiv_{p}n\ees is used when there exists an integer \(k\) such that \(m-n=kp.\)

\section{ The orbital normal forms }\label{sec3}

Equation \eqref{Eq1} can be transformed into the normal form equation \eqref{First2ndlevel} or equivalently,
\be\label{2ndlevel}
v^{(2)}:= \Theta^0_0+ a_0F^{-1}_0+ \sum a_iF^i_i+ \sum b_iE^i_i+ \sum c_i\Theta^i_i,
\ee where \(a_i, b_i, c_i \in \mathbb{R}\); see \cite[Lemma 3.1]{GazorMokhtari}. Without loss of generality we may assume that \(a_0=1\); see the comments above \cite[Remark 3.2]{GazorMokhtari}. (Note that the orbital equivalence may here include reversal of time.) Recall \(r\) and \(s\) from equation \eqref{rs}.
Orbital normal form reduction of equation \eqref{2ndlevel} is split into three cases (this is similar to the three cases of Bogdanov-Takens singularity \cite{KokubuWang,GazorMoazeni,BaidSand91})
\be\label{Cases}
\hbox{ Case I: }\; r<s, \quad \hbox{Case II: }\; r> s, \quad \hbox{ Case III: }\; r=s.
\ee In this paper we only deal with the case I. Recall the grading function (see \cite[Equation 4.3]{GazorMokhtari})
\ba\label{delta1}
\delta(F^{l}_{k})=\delta(E^{l}_{k})=r(k-l)+k ,\quad  \delta(\Theta^{l}_{k})=r(k-l)+k+r
\ea and the linear map
\ba\label{differential}
d^{n, N}\big(S_{n-N+1}, \ldots, S_{n-r}; T_{n-N+1}, \ldots, T_{n-r}\big)&:=&\sum^{N-1}_{k=r} \big([S_{n-k}, v_{k}]+ T_kv_{n-k}\big),
\ea for any \((S_{n-N+1}, \ldots, S_{n-1-r}; T_{n-N+1}, \ldots, T_{n-1-r})\in \ker d^{n-1, N-1},\) the updating vector field \(v=\sum^\infty_{n=r} v_n,\) \(S_i\in \LST_i,\) and \(T_i\in \ta_i\), where \(\LST_i, \ta_i\) denote the \(\delta\)-homogenous subspaces of \(\LST\) and \(\ta.\) Note that the vector field \(v\) is sequentially being updated in the process of normal form computation. We skip many subtleties of the subject; see \cite{benderchur,Sanders03,GazorYuFormal,GazorYuSpec,GazorMoazeni} for more information.

\begin{prop} Consider our formal basis style, the grading function \(\delta\) in \eqref{delta1}, and the linear map \(d^{n, N}\) in equation \eqref{differential}. Then, any Hopf-zero vector field \(v\) in equation \eqref{Eq1} can be transformed into an equivalent \((N+1)\)-th level normal form \(v^{(N+1)}=\sum w_n\) so that \(w_n\) belongs to the complement space of \({\rm im}\, d^{n,N},\) the complement space is uniquely obtained according to the normal form style.
\end{prop}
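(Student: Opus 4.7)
The plan is a grade-by-grade induction in the spirit of the hypernormalization procedures developed in \cite{GazorYuFormal,GazorYuSpec,GazorMoazeni}, now adapted to the orbital equivalence \eqref{Orb} and to the time-rescaling module structure of Lemma \ref{structure}. Starting from the classical normal form \eqref{2ndlevel}, one proceeds by induction on the grade $n\geq r$. Assume inductively that $w_r,\ldots,w_{n-1}$ have already been placed in the chosen complements of $\operatorname{im} d^{r,N},\ldots,\operatorname{im} d^{n-1,N}$; the task is to move $v_n$ into a uniquely determined element $w_n$ of the complement of $\operatorname{im} d^{n,N}$ without disturbing the previous normalization.

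The key observation is that a near-identity transformation generated by an admissible tuple $(S_{n-N+1},\ldots,S_{n-r};\, T_{n-N+1},\ldots,T_{n-r})$ with $S_i\in\LST_i$ and $T_i\in\ta_i$ changes the grade-$n$ piece of $v$ exactly by
\[
v_n \;\longmapsto\; v_n + d^{n,N}(S_{n-N+1},\ldots,S_{n-r};\, T_{n-N+1},\ldots,T_{n-r}),
\]
modulo terms of grade strictly greater than $n$. This follows from the graded Lie algebra relation $[\LST_i,\LST_j]\subseteq\LST_{i+j}$ together with the time-rescaling identities of Lemma \ref{structure}: a grade-$(n-k)$ state-change or time-rescaling generator paired with the grade-$k$ piece $v_k$ contributes to grade $n$, and summing over $r\leq k\leq N-1$ reproduces exactly $d^{n,N}$. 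The kernel constraint $(S_\bullet;T_\bullet)\in\ker d^{n-1,N-1}$ is precisely the compatibility requirement guaranteeing that the previously fixed grades $r,\ldots,n-1$ are left invariant.

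It then follows that the set of admissible perturbations of $v_n$ is exactly $\operatorname{im} d^{n,N}$. The fixed formal basis style provides a total ordering $\prec$ on the $F,E,\Theta$-basis of $\LST_n$, with priority of elimination given to low grades and, within each grade, to $F$-terms over $E$-terms over $\Theta$-terms. By the general construction reviewed in \cite[Page 1006]{GazorYuSpec} (see also \cite{GazorYuFormal}), this ordering singles out a unique complement of $\operatorname{im} d^{n,N}$ in $\LST_n$. One then chooses $(S_\bullet;T_\bullet)\in\ker d^{n-1,N-1}$ so that $v_n + d^{n,N}(S_\bullet;T_\bullet)$ lies in this complement and sets $w_n$ equal to that representative. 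Iterating in $n$ yields the formal series $v^{(N+1)}=\sum_{n\geq r}w_n$.

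The principal technical obstacle is the bookkeeping that the induced grade-$n$ change really equals $d^{n,N}$ on $\ker d^{n-1,N-1}$; this is what justifies the summation range $r\leq k\leq N-1$ and is where the interplay between bracket grades and time-rescaling grades (Lemma \ref{structure}) must be tracked carefully. A secondary issue is the solvability of the compatibility system $(S_\bullet;T_\bullet)\in\ker d^{n-1,N-1}$ with the prescribed image at grade $n$, which reduces to standard cohomological reasoning for graded Lie algebras equipped with a module action and is exactly what the induction hypothesis is designed to accommodate. With these points in hand, both existence and uniqueness (of the complement, hence of $w_n$) follow at once.
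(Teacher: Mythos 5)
Your induction is correct and is essentially the argument the paper relies on: the paper's own proof is a one-line citation to the general spectral-sequence/formal-basis normal form theorems (\cite[Theorem 6.11]{benderchur}, \cite[Theorem A.1]{Sanders03}, \cite[Lemma 4.3]{GazorYuSpec}), whose content is exactly the grade-by-grade scheme you spell out — that transformations in $\ker d^{n-1,N-1}$ alter the grade-$n$ part by precisely $\operatorname{im} d^{n,N}$ while fixing lower grades, and that the formal basis style selects a unique complement. No substantive difference in route, only in the level of detail.
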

\bpr The proof follows from the fact that the hypotheses of \cite[Theorem 6.11]{benderchur} hold; also see \cite[Theorem A.1]{Sanders03} and
\cite[Lemma 4.3]{GazorYuSpec}.
\epr
Denote
\bes\Bbb{F}_{r}:=F^{-1}_{0}+a_{r}F^{r}_{r}.\ees
We now provide some technical formulas for obtaining orbital normal forms.
\begin{lem}\cite[Lemma 3.3]{GazorMokhtariInt} For nonnegative integers \(m, n,\) \(n\geq m,\) the vector field
\bes \mathcal{T}^{m}_{n}:=\sum^{n-m-1}_{l=0}\dfrac{{a_{r}}^{l}\big((n-m-1)(r+1)-m-1\big)^{l}_{-2(r+1)}}{2^{l+1}\big(m+1\big)^{l+1}_{r+1}}\Theta^{m+lr+l+1}_{n+lr}
\ees satisfies
\bes
[\mathcal{T}^{m}_{n},\Bbb{F}_{r}]+\Theta^{m}_{n}=\dfrac{{a_{r}}^{n-m}\big((n-m-1)(r+1)-m-1\big)^{n-m}_{-2(r+1)}}
{2^{n-m}\big(m+1\big)^{n-m}_{r+1}}\Theta^{nr-mr+n}_{nr-mr+n}.
\ees
\end{lem}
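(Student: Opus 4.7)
The plan is to treat this as a telescoping-sum identity inside the Lie algebra spanned by the $F, E, \Theta$-terms. First I would use the structure constants from \cite[Lemma 2.3]{GazorMokhtari}, or equivalently re-derive them from the coordinate definitions of $F^l_k, \Theta^l_k$, to obtain a closed form for $[\Theta^p_q, F^l_k]$. Since $F^l_k$ has no $\partial_\theta$-component and $\Theta^p_q$ has no $\partial_x, \partial_\rho$-components, a short computation shows that this bracket is always a scalar multiple of $\Theta^{p+l}_{q+k}$, the scalar being a simple polynomial in the four indices. The only two specializations actually needed when bracketing against $\Bbb{F}_r = F^{-1}_0 + a_r F^r_r$ are
\[
[\Theta^p_q,\, F^{-1}_0] \;=\; -2p\,\Theta^{p-1}_q,
\qquad
[\Theta^p_q,\, F^{r}_r] \;=\; \bigl[(q-p)(r+1) - p\bigr]\,\Theta^{p+r}_{q+r}.
\]

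Next I would apply these to each summand of $\mathcal{T}^m_n$ and organize the resulting terms by their $\Theta$-index pair. The key observation is that the $F^r_r$-contribution coming from the $l$-th summand has the same $\Theta$-index pair $\bigl(m+(l+1)r+l+1,\,n+(l+1)r\bigr)$ as the $F^{-1}_0$-contribution coming from the $(l+1)$-th summand, so in the interior range $0\le l\le n-m-2$ these can be paired up. The coefficients $c_l$ in the statement are engineered so that each such pair cancels exactly; this reduces to the recursion
\[
\frac{c_{l+1}}{c_l} \;=\; \frac{a_r\bigl[(n-m-1-2l)(r+1) - (m+1)\bigr]}{2\bigl(m+1+(l+1)(r+1)\bigr)},
\]
which is immediate from the definition $(a)^k_b = a(a+b)\cdots(a+(k-1)b)$: incrementing $l$ appends one new factor to the descending product $\bigl((n-m-1)(r+1)-m-1\bigr)^l_{-2(r+1)}$ and one new factor to the ascending product $(m+1)^{l+1}_{r+1}$. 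Once these interior cancellations are installed, only two boundary contributions survive: at $l=0$ we get $c_0\cdot(-2(m+1))\,\Theta^m_n = -\Theta^m_n$, which absorbs the $+\Theta^m_n$ added on the left-hand side; at $l=n-m-1$ we get the $F^r_r$-contribution at index pair $(nr-mr+n,\,nr-mr+n)$. Substituting $c_{n-m-1}$ and absorbing the final factor $\bigl[-(n-m-1)(r+1)-(m+1)\bigr]$ into the descending product yields exactly the coefficient stated on the right-hand side.

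I expect no conceptual obstacle beyond index bookkeeping. There is no leakage into $E$- or $F$-components on the output side, because brackets of a $\Theta$-term with an $F$-term preserve the $\partial_\theta$-direction, so the identity is really a one-dimensional recursion inside the $\Theta$-subalgebra. The only mildly delicate point is aligning the shifts inside the Pochhammer-like products $(\cdot)^k_b$ with the recursion ratio; once that is sorted, the identity writes itself.
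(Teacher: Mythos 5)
The paper does not prove this lemma at all --- it is imported verbatim as a citation of \cite[Lemma 3.3]{GazorMokhtariInt} --- so there is no in-paper argument to compare against; your proposal supplies a proof, and it is correct. I checked the key ingredients: with the bracket convention $[X,Y]=XY-YX$ (components $X(Y^j)-Y(X^j)$), the coordinate definitions of $F^l_k$ and $\Theta^l_k$ give $[\Theta^p_q,F^l_k]=\bigl[(l+1)(q-p)-p(k-l+1)\bigr]\Theta^{p+l}_{q+k}$, which specializes exactly to your two displayed formulas; the $F^r_r$-image of the $l$-th summand and the $F^{-1}_0$-image of the $(l+1)$-th summand do land on the common index pair $\bigl(m+(l+1)(r+1),\,n+(l+1)r\bigr)$; the cancellation condition reduces to precisely the ratio $c_{l+1}/c_l$ you wrote, which is what the Pochhammer-type products $(\cdot)^k_b$ encode; and the two boundary terms give $-\Theta^m_n$ at $l=0$ (since $c_0=\tfrac{1}{2(m+1)}$) and the stated coefficient at $l=n-m-1$, because the extra factor $-(n-m-1)(r+1)-(m+1)$ is exactly the last factor needed to extend $(A)^{n-m-1}_{-2(r+1)}$ to $(A)^{n-m}_{-2(r+1)}$. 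One remark: the lemma as stated is internally consistent with this sign convention (e.g.\ for $n=m+1$ it yields $-\tfrac{a_r}{2}\Theta^{m+r+1}_{m+r+1}$), which differs by a sign from the first identity of Corollary \ref{lemm23}; that discrepancy lies in the paper, not in your argument, but it is worth flagging that whichever bracket convention one fixes must be used consistently in both places.
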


\begin{cor}\label{lemm23}
For any nonnegative integer \(m,\) we have
\be
Z^{m}_{m+1}\Theta^{0}_{0}+\left[\frac{1}{2(m+1)}\Theta^{m+1}_{m+1},\Bbb{F}_{r}\right] =\dfrac{-a_{r}}{2}\Theta^{m+r+1}_{m+r+1}.
\ee Besides, for each \(m\neq 0\) the following vector field
\be\label{Zcal}
\mathcal{Z}^m_m:=\frac{1}{(m+1)^{2}_{1}}F^{m}_{m}+\frac{1}{(m+2)}E^{m}_{m}
\ee satisfies
\be\label{Zmm}
Z^{m}_{m}\Bbb{F}_{r}+ \left[\mathcal{Z}^m_m, \Bbb{F}_{r}\right] = \dfrac{a_{r}(r+1)}{m+1} F^{m+r}_{m+r}.
\ee
\end{cor}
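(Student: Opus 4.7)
Both identities are verified by direct computation using the module structure constants of Lemma \ref{structure} together with the Lie bracket relations of \cite[Lemma 2.3]{GazorMokhtari}, so the work is essentially bookkeeping.

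The first identity is the specialisation $n=m+1$ of the preceding lemma: when $n-m=1$, the sum defining $\mathcal{T}^{m}_{m+1}$ collapses to its single $l=0$ term, namely $\tfrac{1}{2(m+1)}\Theta^{m+1}_{m+1}$, and the $\Theta$-part of Lemma \ref{structure} gives $Z^{m}_{m+1}\Theta^{0}_{0}=\Theta^{m}_{m+1}$, matching the $\Theta^{m}_{n}$ term appearing on the left of the preceding lemma. Simplifying the Pochhammer factors on the right-hand side with $n-m=1$ then yields the claimed coefficient $\tfrac{a_{r}}{2}$.

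For the second identity, the plan is to expand both $Z^{m}_{m}\Bbb{F}_{r}$ and $[\mathcal{Z}^{m}_{m},\Bbb{F}_{r}]$ in the four-dimensional space spanned by $F^{m-1}_{m}$, $E^{m-1}_{m}$, $F^{m+r}_{m+r}$, $E^{m+r}_{m+r}$. For $Z^{m}_{m}\Bbb{F}_{r}$, apply the $Z^{m}_{n}F^{l}_{k}$ formula of Lemma \ref{structure} with $(l,k)=(-1,0)$ and $(l,k)=(r,r)$ to the two summands of $\Bbb{F}_{r}=F^{-1}_{0}+a_{r}F^{r}_{r}$; each summand contributes one $F$-piece and one $E$-piece. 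For $[\mathcal{Z}^{m}_{m},\Bbb{F}_{r}]$, write out the four brackets $[F^{m}_{m},F^{-1}_{0}]$, $[F^{m}_{m},F^{r}_{r}]$, $[E^{m}_{m},F^{-1}_{0}]$, $[E^{m}_{m},F^{r}_{r}]$ using the structure constants of \cite[Lemma 2.3]{GazorMokhtari}, and weight them by $\tfrac{1}{(m+1)^{2}_{1}}$ and $\tfrac{1}{m+2}$ respectively.

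The coefficients in $\mathcal{Z}^{m}_{m}$ are engineered precisely so that, upon summing the two expansions, the $E^{m-1}_{m}$, $F^{m-1}_{m}$, and $E^{m+r}_{m+r}$ contributions cancel, leaving only a multiple of $F^{m+r}_{m+r}$ whose coefficient collapses to $\tfrac{a_{r}(r+1)}{m+1}$. The main obstacle is therefore not conceptual but the clean verification of these three simultaneous cancellations together with the final coefficient simplification; the hypothesis $m\neq 0$ ensures that the structure-constant denominators governing the cancellation remain nonzero and that the ``low'' generator $F^{m-1}_{m}$ is genuinely distinct from the leading summand $F^{-1}_{0}$ of $\Bbb{F}_{r}$, which would otherwise collide and alter the identity.
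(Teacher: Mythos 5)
Your proposal is correct and follows essentially the same route as the paper: the published proof merely records the two structure-constant expansions $Z^{m}_{m}F^{-1}_{0}=\frac{2}{m+2}F^{m-1}_{m}+\frac{2m}{m+2}E^{m-1}_{m}$ and $Z^{m}_{m}F^{r}_{r}=\frac{r+2}{m+r+2}F^{m+r}_{m+r}+\frac{m}{m+r+2}E^{m+r}_{m+r}$ from Lemma \ref{structure} and declares the remaining bracket bookkeeping "straightforward," which is exactly the computation you outline, and your observation that $m=0$ makes $F^{m-1}_{m}$ collide with $F^{-1}_{0}$ correctly identifies why that case is excluded. One caution on the first identity: specializing the preceding lemma at $n=m+1$ is the right idea, but a literal evaluation of its right-hand side gives $a_{r}\cdot\frac{(-m-1)}{2(m+1)}=-\frac{a_{r}}{2}$ rather than $+\frac{a_{r}}{2}$, so there is a sign discrepancy between the lemma and the corollary (almost certainly a typo in one of the two statements) that you should resolve explicitly rather than assert away; it is harmless for the corollary's later use, which only needs the coefficient of $\Theta^{m+r+1}_{m+r+1}$ to be nonzero.
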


\begin{proof}
By Lemma \ref{structure} we have
\bas
Z^{m}_{m}F^{-1}_{0}&=&\dfrac{2}{m+2}F^{m-1}_{m}+\dfrac{2m}{m+2}E^{m-1}_{m},\\
Z^{m}_{m}F^{r}_{r}&=&\dfrac{r+2}{m+r+2}F^{m+r}_{m+r}+\dfrac{m}{m+r+2}E^{m+r}_{m+r}.
\eas The proof is complete by a straightforward computation.
\end{proof}

We have already used the following lemma in \cite{GazorMokhtariEul,GazorMokhtari,GazorMokhtariInt}. Since it plays a central role in efficient use of time rescaling for simplifying \(\Theta\)-terms, we here state it as a Lemma.

\Blem\label{theta00} The transformation
\be
[x(t), \rho(t), \Theta(t)] = \varphi(x(t), \rho(t), \theta(t)):= [x, \rho, \theta-t],
\ee where \(\Theta\) and \(\theta\) are the new and old phase variables, is an invertible linear change of state variables so that \(\varphi\) transforms away the linear part \(\Theta^0_0\) from the normal form system \eqref{Theta}. In addition, \(\varphi^{-1}\) adds \(\Theta^0_0\) back into the normal form system.
\Elem
\bpr
The claim is true since the \(\Theta\)-component is decoupled from \(x\) and \(\rho\)-components; also see the comments on
\cite[Pages 317-318]{GazorMokhtari}.
\epr

The first part of Corollary \ref{lemm23} implies that we can use the time rescaling associated with \(Z^{m}_{m+1}\) for eliminating \(\Theta^{m+r+1}_{m+r+1}\)-terms. This is so when \(\Theta^0_0\) appears in the vector field. On the other hand, the second part of
Corollary \ref{lemm23} implies that time rescaling terms associated with \(Z^m_m\) can be used to simplify \(F^{m+r}_{m+r}\)-terms.
However, applying time rescaling \(Z^m_m\) on a vector field, whose expansion includes \(\Theta^0_0\), creates terms of the form \(\Theta^m_m\) which had been simplified from the system in earlier steps using \(Z^{m-r-1}_{m-r}.\) Therefore, it is beneficial to transform \(\Theta^0_0\) away from the system when we apply time rescaling \(Z^m_m\). Further, we transform it back to the system once we intend to apply \(Z^{m}_{m+1}\)-type of time rescaling.

\begin{cor} Let
\bes
\mathcal{Z}^m_{m+1,r}:=\dfrac{1}{(m+1)(m+3)}F^m_{m+1}+\dfrac{1}{(m+3)}E^m_{m+1}+\dfrac{a_r (r+1)}{2 (m+1)(m+r+2)}F^{m+r+1}_{m+r+1}.
\ees Then, for each \(m\neq 0\) we have
\bas Z^m_{m+1}\mathbb{F}_r+[\mathcal{Z}^m_{m+1,r}, \mathbb{F}_r]&=&\dfrac{{a_r}^2}{2}\Big(\dfrac{ (m-2r-1)}{(m+3)(m+r+2)}+\dfrac{ (m-r-1)}{(m+2r+3)}\\
&&-\dfrac{ r(r+2)(m+1)}{ (m+3)(m+r+2)(m+r+3)}\Big)F^{m+2r+1}_{m+2r+1}.\eas
For the case \(m=0,\) the following vector field
\bas
\mathcal{Z}^{0}_{1, r}&:=&\frac{1}{3} F^0_1+\dfrac{a_{r}(2r^2+10r+9)}{6(r+2)(r+3)} F_{r+1
}^{r+1}-\frac{a_{{r}}}{2(r+3)} E_{r+1}^{r+1}.
\eas satisfies
\ba\label{Z01}
Z^0_1\Bbb{F}_{r}+[{\mathcal{Z}^{0}_{1, r}},\Bbb{F}_{r}]=-\dfrac{{a_{r}}^{2}(7r^2+17r+9)}{6(r+2)(2r+3)}
F^{2r+1}_{2r+1}+\dfrac{{a_{r}}^2(r+1)}{2(2r+3)}E^{2r+1}_{2r+1}.
\ea
\end{cor}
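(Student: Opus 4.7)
The plan is a direct verification using two ingredients: the $\mathcal{R}$-module structure formulas of Lemma \ref{structure} to expand $Z^m_{m+1}\mathbb{F}_r = Z^m_{m+1}F^{-1}_0 + a_r Z^m_{m+1} F^r_r$ in the $F$, $E$-basis, and the Lie-algebra structure constants of \cite[Lemma 2.3]{GazorMokhtari} to expand each bracket arising from $[\mathcal{Z}^m_{m+1,r}, \mathbb{F}_r]$. Both operations preserve the $\delta$-grade, so the whole calculation takes place in the single graded subspace of $\LST$ of grade $m+2r+1$; within the $F$, $E$-basis this subspace is supported at just three bidegree pairs, namely $(m-1, m+1)$, $(m+r, m+r+1)$ and $(m+2r+1, m+2r+1)$.

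A single algebraic fact drives the case split: when $m = 0$, the numerator $m(k+2)-n(l+1)$ of the $E$-coefficient in the module formula of Lemma \ref{structure} vanishes at $l=-1, k=0, n=1$, so $Z^0_1 F^{-1}_0$ produces only $F^{-1}_1$ and no $E^{-1}_1$-term. For the generic case $m\neq 0$ I would compute the six brackets
\[
[F^m_{m+1}, F^{-1}_0],\; [F^m_{m+1}, F^r_r],\; [E^m_{m+1}, F^{-1}_0],\; [E^m_{m+1}, F^r_r],\; [F^{m+r+1}_{m+r+1}, F^{-1}_0],\; [F^{m+r+1}_{m+r+1}, F^r_r],
\]
organise all contributions in a table indexed by the three bidegree pairs, and verify that the three coefficients $\tfrac{1}{(m+1)(m+3)}$, $\tfrac{1}{m+3}$, $\tfrac{a_{r}(r+1)}{2(m+1)(m+r+2)}$ in $\mathcal{Z}^m_{m+1,r}$ are precisely those forcing cancellation at the first two bidegree pairs. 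Reading off the residue at $(m+2r+1, m+2r+1)$, using that brackets of $F^l_l$-type vectors remain in the $F^l_l$-subalgebra (hence produce no $E^{m+2r+1}_{m+2r+1}$-component), then yields the stated three-term scalar. For $m=0$ the missing $E^{-1}_1$-constraint forces the modified ansatz involving $F^0_1, F^{r+1}_{r+1}$ and $E^{r+1}_{r+1}$; it cancels the $F^{-1}_1$-contribution together with the entire pair at $(r, r+1)$, leaving both $F^{2r+1}_{2r+1}$ and $E^{2r+1}_{2r+1}$ as residues with the coefficients stated in \eqref{Z01}.

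The main obstacle I anticipate is purely combinatorial bookkeeping: six brackets, each splitting into several monomials spread across three bidegree pairs, must be reconciled with the contributions from $Z^m_{m+1}\mathbb{F}_r$. The compatibility of the cancellation constraints with the small number of ansatz coefficients is automatic from the graded Lie-algebra structure of $\LST$ but still needs to be verified in coefficients. Once a table indexed by bidegree pair is written down, the required identities in $m$ and $r$ reduce to elementary rational-function manipulations, entirely parallel to the derivation of \eqref{Zmm} in Corollary \ref{lemm23}.
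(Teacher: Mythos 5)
Your proposal is correct and is essentially the paper's own proof: the paper disposes of this corollary with the single sentence that it is a straightforward computation following Lemma \ref{structure} and \cite[Lemma 2.3]{GazorMokhtari}, which is exactly the pair of ingredients (the $\mathcal{R}$-module structure constants for $Z^m_{m+1}\mathbb{F}_r$ and the Lie-algebra structure constants for the brackets) that you propose to combine. Your additional bookkeeping --- grade preservation, the three bidegree pairs $(m-1,m+1)$, $(m+r,m+r+1)$, $(m+2r+1,m+2r+1)$, the vanishing of the $E$-coefficient numerator $m(k+2)-n(l+1)$ at $m=0$ as the source of the case split, and the observation that the diagonal terms $F^{l}_{l}$ close under the bracket --- is all correct and only makes explicit what the paper leaves implicit.
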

\bpr This is a straightforward computation following Lemma \ref{structure} and \cite[Lemma 2.3]{GazorMokhtari}.
\epr

In the following we use the notations \({\mathcal{X}_{r}^{k}},\) \(\mathcal{F}^{-1}_{k,r},\) \(\mathcal{T}^0_{k,r},\) and \(\mathcal{E}^{0}_{k,r}\) along with a sequence \(e_{k,m}\) which are defined by \cite[Equations 3.4]{GazorMokhtariInt} and \cite[Equation 4.8]{GazorMokhtari}.

\begin{thm}\label{r+1level}
The \((r+1)\)-th level orbital normal form of \eqref{First2ndlevel} is
$$v^{(r+1)}:=\Bbb{F}_{r}+\sum^{\infty}_{k=s}\beta_{k}E^{k}_{k}+\sum^{r}_{k=0}\gamma_{k}\Theta^{k}_{k}$$
and $\beta_{k}=0$ for $k\equiv_{2(r+1)}-1$ when \(k\geq s.\)
\end{thm}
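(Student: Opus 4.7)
The plan is to work grade-by-grade, using the explicit coboundary identities of Corollary \ref{lemm23} and equation \eqref{Z01} to exhibit every monomial of \eqref{2ndlevel} outside the claimed normal form as lying in the image of the $(r+1)$-th level normal-form operator $d^{n, r+1}$ from the Proposition above.

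First I would eliminate the higher $\Theta$-diagonals. With $\Theta^0_0$ present in the vector field, the first identity of Corollary \ref{lemm23} expresses $\frac{a_r}{2}\Theta^{m+r+1}_{m+r+1}$ as $Z^m_{m+1}\Theta^0_0 + [\frac{1}{2(m+1)}\Theta^{m+1}_{m+1}, \Bbb{F}_r]$ for each $m\geq 0$, so every $\Theta^k_k$ with $k\geq r+1$ is removable. Next I would temporarily suppress $\Theta^0_0$ using the change of variable $\varphi$ of Lemma \ref{theta00} and apply equation \eqref{Zmm}: for each $m\geq 1$, $Z^m_m \Bbb{F}_r + [\mathcal{Z}^m_m, \Bbb{F}_r] = \frac{a_r(r+1)}{m+1}F^{m+r}_{m+r}$, which eliminates every $F^k_k$ with $k\geq r+1$. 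Suppressing $\Theta^0_0$ at this step is essential, since otherwise the extra contribution $Z^m_m\Theta^0_0=\Theta^m_m$ would reintroduce $\Theta$-diagonals just removed; afterwards $\varphi^{-1}$ restores $\Theta^0_0$. Together with $a_0=1$, the $F$- and $\Theta$-parts then match $\Bbb{F}_r + \sum_{k=0}^r \gamma_k \Theta^k_k$ exactly.

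For the $E$-diagonals, only $E^k_k$ with $k \geq s$ are present by the definition of $s$. The base case $k=2r+1$ of the congruence claim follows from equation \eqref{Z01}: the combination $Z^0_1 \Bbb{F}_r + [\mathcal{Z}^0_{1,r}, \Bbb{F}_r]$ is a nonzero linear combination of $F^{2r+1}_{2r+1}$ and $E^{2r+1}_{2r+1}$, and since $F^{2r+1}_{2r+1}$ was shown removable in the previous step, the $E^{2r+1}_{2r+1}$ component is isolated as a coboundary. For the higher members $k=2r+1+2(r+1)j$ of the same residue class (with $k\geq s$), I expect an inductive extension using $Z^m_{m+1}$-type rescalings paired with brackets against $\Bbb{F}_r$; the shift $2(r+1)$ in the admissible residues arises because the $F^{-1}_0$ and $F^r_r$ components of $\Bbb{F}_r$ combine inside the $\mathcal{R}$-module and bracket structure constants so as to contribute to the $E^k_k$-direction only when $k\equiv_{2(r+1)} -1$.

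The main obstacle will be making the inductive step rigorous for all $k>2r+1$: showing (i) that the image of $d^{k,r+1}$ contains the $E^k_k$ direction modulo the $F^k_k$ direction precisely when $k\equiv_{2(r+1)}-1$, and (ii) that in every other residue class the $E^k_k$ coefficient of any admissible combination vanishes, so that the surviving $\beta_k$ genuinely appear in the normal form. Both require a careful tracking of the Lie-bracket structure constants from \cite[Lemma 2.3]{GazorMokhtari} and the module action of Lemma \ref{structure}, and the formal-basis-style priority of $F$-terms over $E$-terms at equal grade (Section \ref{sec2}) is what guarantees the clean isolation of $E^k_k$ from the mixed $F$/$E$ coboundary combinations throughout the induction.
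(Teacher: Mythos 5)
Your treatment of the $\Theta$- and $F$-diagonals matches the paper's: Corollary \ref{lemm23}, together with the $\Theta^0_0$ bookkeeping of Lemma \ref{theta00}, places $\Theta^m_m$ and $F^m_m$ for $m>r$ in the image of the relevant operators $d^{n,r+1}$, and this part of your argument is fine. The gap is in the $E$-part, which is the actual content of the congruence claim $\beta_k=0$ for $k\equiv_{2(r+1)}-1$. Your base case $k=2r+1$ via \eqref{Z01} works, but the proposed induction ``using $Z^m_{m+1}$-type rescalings paired with brackets against $\Bbb{F}_r$'' cannot reach the higher members of the residue class: by the very corollary you cite, for $m\neq 0$ the combination $Z^m_{m+1}\Bbb{F}_r+[\mathcal{Z}^m_{m+1,r},\Bbb{F}_r]$ is a pure multiple of $F^{m+2r+1}_{m+2r+1}$ with \emph{no} $E$-component, so for $m\neq 0$ these rescalings give nothing in the $E$-direction beyond what the $F$-elimination already provides. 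Only the exceptional case $m=0$ produces an $E^{2r+1}_{2r+1}$ term, which is why your argument stops at the first member of the residue class.

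The paper's mechanism for the remaining indices $k=(2k'+1)(r+1)+r$ (which is precisely the class $k\equiv_{2(r+1)}-1$) is not a time rescaling at all but the closed-form bracket identity \eqref{E01}: $[\mathcal{E}^0_{2k'+1,r},\Bbb{F}_r]$ equals a nonzero multiple of $E^{(2k'+1)(r+1)+r}_{(2k'+1)(r+1)+r}$ plus a multiple of the already-removable $F^{(2k'+1)(r+1)+r}_{(2k'+1)(r+1)+r}$, where the $\mathcal{E}^0_{2k'+1,r}$ are the explicit kernel generators of $\ad_{F^{1}_{0}}\circ\ad_{\Bbb{F}_{r}}$ imported from \cite[Lemma 4.1, Eq.\ (4.10)]{GazorMokhtari}. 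This disposes of every $k$ in the residue class at once, with no induction. Your proposal never invokes these generators, and without them (or an equivalent explicit computation of the structure constants) the removability of $E^k_k$ for $k\equiv_{2(r+1)}-1$ with $k>2r+1$ is not established. Likewise your point (ii) --- that in the other residue classes no admissible combination hits the $E^k_k$ direction, so the surviving $\beta_k$ are genuinely present --- is asserted but not argued; the paper defers the exhaustion of the kernel elements \eqref{Eqn3.9} to Lemmas \ref{Lemm3.6}--\ref{Lemm3.8} and the proof of Theorem \ref{ONFs+1level}.
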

\bpr
Corollary \ref{lemm23} implies that \(\Theta^{m}_{m}\in {\rm im}\, d^{m+s, r+1}\) and \(F^{m}_m\in {\rm im}\, d^{m, r+1}\) for any \(m>r.\)
By \cite[Lemma 4.1, Equation (4.10)]{GazorMokhtari} we have
\ba\label{E01}
[\mathcal{E}^{0}_{2k+1,r},\Bbb{F}_{r}]&=&\dfrac{-{a_{r}}^{2k+2}(r+1)(2k+3)(2k+1)^{2k+1}_{-2}}{(2k)! 2^{2k+1}\big((2k+2)(r+1)+1\big)}E^{(2k+1)(r+1)+r}_{(2k+1)(r+1)+r}
\\\nonumber&&-2{a_{r}}^{2k+2}e_{2k+1,2k+2}\big(2(k+1)(r+1)+1\big)F^{(2k+1)(r+1)+r}_{(2k+1)(r+1)+r},
\ea where \(e_{2k+1,2k+2}\) is nonzero. Therefore, \(E^{(2k+1)(r+1)+r}_{(2k+1)(r+1)+r}\) also belongs to \({\rm im}\, d^{(2k+1)(r+1)+r, r+1}\) and the proof is complete.
\epr
The reader should note that the first non-zero coefficient \(b_j\) (for \(j\geq 1\)) from the system \eqref{First2ndlevel} may change in the \(r+1\)-th level normal form computation. Consequently, the number \(s\) defined in equation \eqref{rs} is changed and it must be updated once the \((r+1)\)-th level normal form coefficients are computed; also see \cite[Remark 2]{GazorMoazeni}. This is important for possible implementation of the results in a computer algebra system.
We recall that
\be\label{Eqn3.9}
\ker \ad_{F^{1}_{0}}\circ\ad_{\Bbb{F}_{r}} = \Span \left\{{\mathcal{X}_{r}^{k}}, \mathcal{F}^{-1}_{k,r}, \mathcal{E}^{0}_{k,r}, \mathcal{T}^0_{k,r}\,|\, k\in \N\right\},
\ee and assume that \(s<\infty.\)

Define \(\mathds{X}^k_r\) by
\bas
&& \sum^{k}_{m=0}\sum^{2k-m+2}_{l=0}\binom{k}{m}\dfrac {{a_{r}}^{m}b_{s}(s)_2^2(2k-m)g_{l}}{2k+mr+s}F^{(r+1)(l+m)+s}_{2(k+1)+r(m+l)+s}\\
&&+\sum^{k}_{m=0}\sum^{2k-m+2}_{l=0}\dfrac{{{a_{r}}^{m+l}b_{s}\binom{k}{m}(s)_2^2(2k-m)}\big(2(k-m-1)(r+1)-s\big)^{l}_{-2(r+1)}}
{2^{l+1}\big(2k+s+(l+m)r\big)\big(m(r+1)+s\big)^{l+1}_{r+1}}
E^{(r+1)(l+m)+s}_{2(k+1)+r(m+l)+s}.
\eas
\Blem\label{Lemm3.6} For any natural number \(k,\) we have
\((\mathcal{X}^{k-2}_r, \0, \mathds{X}^k_r; \0)\in \ker d^{2(k-1)(r+1)+s+r, s+1},\) where the zeros belong to \(\mathbb{R}^{s-r-1}\) and \(\mathbb{R}^{s+1-r},\) respectively.
\Elem
\bpr
Since
\be\label{Unpvd}
\sum^{k}_{m=0}\binom{k}{m}\dfrac{(2k-m)\big(2(k-m-1)(r+1)-s\big)^{2k-m-1}_{-2(r+1)}}{2^{2k-m-1}\big(s+m(r+1)\big)^{2k-m-1}_{r+1}}=0,
\ee we have
\bes
{[\mathcal{X}^k_r,b_{s}E^{s}_{s}]+[\mathds{X}^k_r, \mathbb{F}_r]=0.}
\ees Note that the equality \eqref{Unpvd} is verified by {\sc Maple}.
\epr

For ease of notation we suppress the indices of some new notations in what follows in this section.
The following expression, denoted by \(\mathfrak{F},\) is needed for the next lemma:
\bas
&&\!\!\!\!\sum^{k+1}_{m=0}\sum^{k-m}_{l=0}\dfrac{b_{s}{a_{r}}^{m+l}(s)^2_2(k+2-m)(k+2)^{m}_{-2}\big((k-2m)(r+1)-s\big)^{l}_{-2(r+1)}}
{2^{m+l+1}m!\big(m(r+1)+s\big)\big(r(m+l)+k+s+2\big)\big((r+1)(m+1)+s\big)^{l}_{r+1}}
E^{(r+1)(m+l)+l}_{(m+l)r+k+s}\\
&&\!\!\!\!+\sum^{k+1}_{m=0}\sum^{k-m}_{l=0}\dfrac{b_{s}{a_{r}}^{m}(s)^2_2(k+2-m)\big((k+2)(r+1)\big)^{m}_{-2(r+1)}g_{l}}{2^{m}m!(r+1)^{m}(k+mr+s+2)}
F^{(r+1)(m+l)+l}_{(m+l)r+k+s}\\
&&\!\!\!\!+\sum^{k+1}_{m=0}\sum^{k-m}_{l=0}\dfrac{b_{s}{a_{r}}^{m+l}(s)_2^2(k+2-m)(k+2)^{m}_{-2}\big((2m-k)(r+1)+s-r\big)^{l}_{2(r+1)}}
{(-1)^{l}2^{m+l+1}m!(k+mr+s+2)(m(r+1)+s+1)^{l+1}_{r+1}}F^{(r+1)(m+l)+l}_{(m+l)r+k+s}.
\eas
\Blem\label{Lemm3.7} Let \(k\) be an odd number and
\bas
\mathds{F}^{-1}_k&:=&\dfrac{a_{r}}{k+1} \mathcal{F}^{-1}_k+\dfrac{{a_{r}}^{k+2}k(r+1)(k+2)^{k+1}_{-2}}{2^{k+1}(k+1)!}\mathcal{Z}^{k(r+1)+r}_{k(r+1)+r},
\eas
where \(\mathcal{Z}^{k(r+1)+r}_{k(r+1)+r}\) is given by equation \eqref{Zcal}. Then,
\be
\left(\mathds{F}^{-1}_k; \dfrac{{a_{r}}^{k+2}k(r+1)(k+2)^{k+1}_{-2}}{2^{k+1}(k+1)!}Z^{k(r+1)+r}_{k(r+1)+r}\right)\in \ker d^{k(r+1)+2r, r+1}.
\ee In addition,
\bes
d^{k(r+1)+s+r, s+1}\left(\mathds{F}^{-1}_k, \0, \mathfrak{F}; \0, \dfrac{{a_{r}}^{k+2}k(r+1)(k+2)^{k+1}_{-2}}{2^{k+1}(k+1)!}Z^{k(r+1)+r}_{k(r+1)+r}\right)=0.
\ees
\Elem
\bpr The proof for the first part is complete by equation \eqref{Zmm} and \eqref{Zcal} together with \cite[Equation 3.4]{GazorMokhtariInt}, that is,
\bas
\left[\mathcal{F}^{-1}_k, \Bbb{F}_r\right]=-\dfrac{{a_{r}}^{k+2}k(r+1)(k+2)^{k+1}_{-2}}{2^{k+1}(k+1)!}F^{k(r+1)+2r}_{k(r+1)+2r}.
\eas
By \cite[Lemma 4.2]{GazorMokhtari}, we have
\bas
&&\left[\mathcal{F}^{-1}_{k}, b_s{E}^s_s\right]+[\mathfrak{F},\Bbb{F}_{r}]
\\&=& \sum^{k+1}_{m=0}\dfrac{b_{s}{a_{r}}^{k+1}(s)^{2}_{2}(k+2-m)(k+2)^{m}_{-2}
\big((k-2m)(r+1)-s\big)^{k-m}_{-2(r+1)}}{2^{k+1}m!\big((k+1)(r+1)+s+1\big)(m(r+1)+s)^{k-m-1}_{r+1}}{E^{k(r+1)+s+r}_{k(r+1)+s+r}}
\eas
Then, the rest of the proof follows the identity
\ba\label{EqMaple}
0&=&\frac{b_{s}{a_{r}}^{k+2}k(r+1)(k+2)^{k+1}_{-2}(s+2)\big((k+1)(r+1)+s\big) }{2^{k+1}(k+1)!
 \big((k+1)(r+1)+s+1\big)(r+1)(k+1)}\\\nonumber
&&+\sum^{k+1}_{m=0}\dfrac{ b_{s}{a_{r}}^{k+2}(s)^{2}_{2}(k+2-m)(k+2)^{m}_{-2}
\big((k-2m)(r+1)-s\big)^{k-m}_{-2(r+1)}}{2^{k+1}m!(k+1)\big((k+1)(r+1)+s+1\big)(m(r+1)+s)^{k-m-1}_{r+1}}.
\ea The equality \eqref{EqMaple} is verified by {\sc Maple}.
\epr

\Blem \label{Lemm3.8} Let
\bas
\mathds{E}^{0}_{2k}&:=&\left(\dfrac{a_{r}(r+1)}{2kr+2k+1}\right)\mathcal{E}^{0}_{2k,r}
+{a_{r}}^{2k+1}e_{2k,2k}(2k(r+1)-r) \mathcal{Z}^{2k(r+1)}_{2k(r+1)},
\eas for any natural number \(k.\) Then,
\be\label{39}
\left(\mathds{E}^{0}_{2k}; {a_{r}}^{2k+1}e_{2k,2k}\big(2k(r+1)-r\big)  Z^{2k(r+1)}_{2k(r+1)}\right)\in \ker d^{2k(r+1)+r, r+1}.
\ee
 In addition, there exists a \(\delta\)-homogenous transformation generator \(\mathfrak{E}\in \LST_{2k+s}\) such that
\ba\label{E2k}
&&d^{2k(r+1)+s, s+1}\left(\mathds{E}^{0}_{2k}, \0, \mathfrak{E}; {a_{r}}^{2k+1}e_{2k,2k}(2k(r+1)-r) Z^{2k(r+1)}_{2k(r+1)}, \0\right)=\\\nonumber
&&\Bigg(\sum^{2k}_{m=0}\dfrac{-b_{s}(k+1)\binom{k}{m}(mr+2k-s)^2_{2(s+1)}\big(2(k-m-1)(r+1)+r-s\big)^{2k-m-1}_{-2(r+1)}}
{(r+1)^{-1}{a_{r}}^{-2k-1}2^{2k-m-1}\big(mr+2(k+1)\big)\big(m(r+1)+s+1\big)^{2k-m-1}_{r+1}}\\\nonumber
&&\;\;+ b_{s}{a_{r}}^{{2k+1}}e_{2k,2k}\frac{(s+2)\big(2k(r+1)-r\big)^2_{r+s+1}}{\big(2k(r+1)+1\big)_{s+1}^2}\Bigg)E^{2k(r+1)+s}_{2k(r+1)+s}
\ea
\Elem
\bpr We recall \cite[Equation 4.9]{GazorMokhtari}
\bas
\left[\mathcal{E}^{0}_{2k,r}, \Bbb{F}_r\right]={a_{r}}^{2k+1}e_{2k,2k}(2k(r+1)-r) F^{2k(r+1)+r}_{2k(r+1)+r}.
\eas Thus, Equations \eqref{Zmm} and \eqref{Zcal} imply equation \eqref{39}.
By \cite[Lemma 4.2]{GazorMokhtari}, for any natural number \(k\) there exists a vector field \(\mathfrak{E}\in \LST_{2k+2}\) so that
\bas
&&[\mathcal{E}^{0}_{2k,r}, b_s{E}^s_s]+[\mathfrak{E},\Bbb{F}_{r}]=\\\nonumber
&&\sum^{2k}_{m=0}\dfrac{{b_{s}}{a_{r}}^{2k}(k+1)\binom{k}{m}(2k+mr-s)^2_{2s+2}\big(2(k-m-1)(r+1)+r-s\big)^{2k-m-1}_{-2(r+1)}}
{2^{2k-m-1}\big(mr+2(k+1)\big)\big(m(r+1)+s+1\big)^{2k-m-1}_{r+1}} E^{2k(r+1)+s}_{2k(r+1)+s}.
\eas
Then, the rest of the proof is straightforward.
\epr
\begin{thm}\label{ONFs+1level}
The \((s+1)\)-th level orbital normal form of \eqref{First2ndlevel} is
\be\label{s+1level}
v^{(s+1)}:=\Theta^0_0+ F^{-1}_0+ \delta F^r_r+\sum^{\infty}_{k=s}\beta_{k}E^{k}_{k}+\sum^{r}_{k=1}\gamma_{k}\Theta^{k}_{k}.
\ee
Here \(\delta:= {\rm sign}(a_r),\) \(\beta_{k}=0\) when \(k\equiv_{2(r+1)}-1,\) and \(k\equiv_{2(r+1)}s\) for \(k>s\). Furthermore, the \(s+1\)-jet of \(v^{(s+1)}\) gives rise to the simplest \(s+1\)-jet orbital normal form.
\end{thm}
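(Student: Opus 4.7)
\bpr
My plan is to build on Theorem \ref{r+1level}, which already gives the $(r+1)$-th level normal form
$$v^{(r+1)} = \mathbb{F}_r + \sum_{k \geq s}\beta_k E^k_k + \sum_{k=0}^{r}\gamma_k \Theta^k_k,$$
with $\beta_k = 0$ when $k \equiv_{2(r+1)} -1$. What remains is to determine which additional $E^k_k$-terms become removable once the map $d^{n, r+1}$ is enlarged to $d^{n, s+1}$, and then to normalize the leading coefficient $a_r$ and restore $\Theta^0_0$.

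The main step is to pin down ${\rm im}\, d^{n, s+1}$ at the relevant degrees. I would read Lemmas \ref{Lemm3.6} and \ref{Lemm3.7} as saying that the kernel generators $\mathcal{X}^{k-2}_r$ and $\mathcal{F}^{-1}_k$, which already belonged to $\ker d^{\cdot, r+1}$, admit the respective corrections $\mathds{X}^k_r$ and $\mathfrak{F}$ so that the enlarged tuples remain in $\ker d^{\cdot, s+1}$. Consequently, those directions cannot contribute to the image at level $s+1$, so no further $E$-terms beyond the ones forced by Lemma \ref{Lemm3.8} can be eliminated. Lemma \ref{Lemm3.8} in turn exhibits, for each $k\geq 1$, an element whose image under $d^{2k(r+1)+s, s+1}$ is a nonzero scalar multiple of $E^{2k(r+1)+s}_{2k(r+1)+s}$. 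Since the set $\{2k(r+1)+s : k\in \mathbb{N}\}$ is exactly the set of integers $k' > s$ with $k' \equiv_{2(r+1)} s$, this yields the second modular exclusion claimed in the theorem.

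I would then normalize the cubic block by a diagonal scaling $(x,\rho)\mapsto (|a_r|^{-1/(r+1)} x,|a_r|^{-1/(r+1)}\rho)$ together with an appropriate constant time rescaling (and a time reversal when $a_r<0$). This converts $a_r F^r_r$ into $\delta F^r_r$ with $\delta={\rm sign}(a_r)$ while fixing the form of $F^{-1}_0$, and it preserves the list of permissible $E$- and $\Theta$-terms. Adding $\Theta^0_0$ back into the system via Lemma \ref{theta00} yields $v^{(s+1)}$ in the asserted shape \eqref{s+1level}. The assertion that the $s+1$-jet of $v^{(s+1)}$ is the simplest $s+1$-jet orbital normal form then follows from the general uniqueness of the simplest normal form coefficients in a formal basis style, exactly as invoked in the proposition preceding this theorem.

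The main obstacle I foresee is verifying that the coefficient of $E^{2k(r+1)+s}_{2k(r+1)+s}$ produced by Lemma \ref{Lemm3.8} is genuinely nonzero for every $k\geq 1$; only under this non-vanishing does that monomial actually lie in ${\rm im}\, d^{2k(r+1)+s, s+1}$ and become eliminable. The condition reduces to a Pochhammer-product identity weighted by the sequence $e_{2k,2k}$, and I would expect to check it symbolically in {\sc Maple}, in the same spirit as the auxiliary identities \eqref{Unpvd} and \eqref{EqMaple} already used to establish Lemmas \ref{Lemm3.6} and \ref{Lemm3.7}.
\epr
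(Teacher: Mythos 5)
Your outline reproduces the paper's strategy for most of the argument: Theorem \ref{r+1level} as the starting point, Lemma \ref{Lemm3.8} to remove the $E^{2k(r+1)+s}_{2k(r+1)+s}$-terms (giving the exclusion $k\equiv_{2(r+1)}s$ for $k>s$), Lemmas \ref{Lemm3.6} and \ref{Lemm3.7} to show that the kernel families $\mathcal{X}^{k}_r$ and $\mathcal{F}^{-1}_{k}$ stay in $\ker d^{\cdot,\,s+1}$ after correction and hence yield nothing new, the rescaling of $a_r$ to $\delta$, and restoring $\Theta^0_0$ via Lemma \ref{theta00}. Your closing concern about the non-vanishing of the coefficient in Lemma \ref{Lemm3.8} is also well placed; the paper relies on exactly such symbolically verified identities.

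However, there is a genuine gap: your claim that ``no further $E$-terms beyond the ones forced by Lemma \ref{Lemm3.8} can be eliminated'' does not follow from Lemmas \ref{Lemm3.6} and \ref{Lemm3.7} alone, because the kernel of $d^{\cdot,\,r+1}$ listed in \eqref{Eqn3.9} is not exhausted by the $\mathcal{X}$- and $\mathcal{F}^{-1}$-families. In particular, the time-rescaling generators $Z^{0}_{1}$ and $Z^{r+1}_{r+1}$ together with the odd-index generator $\mathcal{E}^{0}_{1,r}$ (the even-index ones are consumed by Lemma \ref{Lemm3.8}) combine, via \eqref{Z01}, \eqref{E01} with $k=0$, and $Z^{r+1}_{r+1}\Bbb{F}_r+[\mathcal{Z}^{r+1}_{r+1},\Bbb{F}_r]=\frac{a_r(r+1)}{r+2}F^{2r+1}_{2r+1}$, into an element of $\ker d^{r+s+1,\,r+1}$ that could a priori be used at level $s+1$ to remove $E^{r+s+1}_{r+s+1}$ --- a term that generically survives in \eqref{s+1level} since $r+s+1$ satisfies neither modular exclusion. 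Since the theorem asserts that $v^{(s+1)}$ is the \emph{simplest} normal form at this level, one must compute the spectral effect of this combination on $F^{-1}_0+\delta F^r_r+\beta_s E^s_s$ and check that it produces only $F^{r+s+1}_{r+s+1}$ (already removed at level $r+1$) and no independent $E^{r+s+1}_{r+s+1}$-contribution; this is precisely the computation culminating in \eqref{Z01F} in the paper, and without it the minimality claim is unproved. A complete argument should also note, as the paper does, that the $\Theta$-terms form a Lie ideal so the $\mathcal{T}^0_{k,r}$-directions and the grading shift for $\Theta$-terms cannot interfere with the $E$-term count.
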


\begin{proof} Using changes of variables given above \cite[Lemma 4.1]{GazorMokhtari}, we may change the coefficient \(a_r\) to \({\rm sign}(a_r).\) Thus, we may assume that \(a_r=\delta=\pm1.\) Lemma \ref{Lemm3.8} (see equation \eqref{E2k}) concludes that terms of the form \(E^{2k(r+1)+s}_{2k(r+1)+s}\) are simplified in the \(s+1\)-level. This and Theorem \ref{r+1level} imply that the normal form vector field \eqref{s+1level} can be obtained in the \(s+1\)-level normal form. However, we still need to address the remaining vector fields in \(\ker d^{N,r+1}\) and prove that they do not contribute into further simplification of the system in \(s+1\)-level. In this direction, equation \eqref{Eqn3.9} lists the kernel terms while Lemmas \ref{Lemm3.6} and \ref{Lemm3.7} prove our claim.

Since vector space of all \(\Theta\)-terms is a Lie ideal, the grading \eqref{delta1} for \(\Theta\)-terms can be shifted. Therefore, in what follows we prove
that the kernel term generated by \(Z^0_1\) can not be used for further normalizing (simplifying) \(E\)-terms (instead of \(\Theta\)-terms) in the \(s+1\)-level.
Equation
\begin{equation*}
Z^{r+1}_{r+1} \Bbb{F}_{r}+[\mathcal{Z}^{r+1}_{r+1}, \Bbb{F}_{r} ]=\dfrac{a_{r}(r+1)}{r+2}F^{2r+1}_{2r+1},
\end{equation*} equation \eqref{E01} for \(k=0,\) and equation \eqref{Z01} imply that a linear combination (see equation \eqref{Z01F}) of transformation generators \((\mathcal{Z}^{0}_{1, r}; Z^{0}_{1}),\) \(\mathcal{E}^{0}_{1,r},\) and \((\mathcal{Z}^{r+1}_{r+1}; Z^{r+1}_{r+1})\) belong to \(\ker d^{r+s+1,r+1}.\) Therefore, the linear combination of these can potentially be applied to the system for possible normalization (simplification) in the \(s+1\)-level without effecting terms of lower grades. Hence, we consider the spectral effect of either of these on the vector field \(F^{-1}_0+\delta F^r_r+\beta_s E^s_s.\) (Recall that \(\Theta^0_0\) can be simplified
using Lemma \ref{theta00}.) This is achieved by the following formulas:
\begin{equation*}
 Z^{0}_{1} E^{s}_{s}+ \Le[\dfrac{1}{2(s+1)}E^{s+1}_{s+1}, \Bbb{F}_{r}\Ri]= \dfrac{a_{r}r(r+2)}{2(s+1)(s+r+3)}F^{r+s+1}_{r+s+1}-\dfrac{a_{r}(s+3)}{2(r+s+3)}E^{r+s+1}_{r+s+1},
\end{equation*}
and
\bas
& [\mathcal{Z}^{0}_{1, r}, E^{s}_{s}]+\Le[\frac{(s)^2_2}{3(s+1)^2_2}E^{s+1}_{s+1}-\frac{1}{2(s+2)_1^2}F^{s+1}_{s+1}, \Bbb{F}_{r}\Ri]
&\\ =&{\frac{({r}^{2}+3r-{s}^{2}-2s+2)a_{r}}{2(r+s+3)\left(r+2\right)}}E^{r+s+1}_{r+s+1}
-a_{r}\Le(\frac{(r+1)(2r^2+10r+9)}{6(r+2)^2_{s+1}}-\frac{(r)^2_2(s)_2^2}{3(s+1)^2_2(s+r+3)}-\frac{(s-r+1)}{2(s+2)^2_1}\Ri)
F^{r+s+1}_{r+s+1},&
\eas
while
\bas
{\left[\mathcal{E}^{0}_{1,r}, b_{s}E^{s}_{s}\right]+\Le[\dfrac{b_{s}(s-1)}{2(s+1)}E^{s+1}_{s+1}, \Bbb{F}_{r}\Ri]}&=& -\dfrac{3a_{r}b_{s}r}{2(r+s+3)}E^{r+s+1}_{r+s+1} -\frac{a_{r}b_{s}r(2r+3-s)}{2(r+s+3)(s+1)}F^{r+s+1}_{r+s+1},
\eas \({Z^{r+1}_{r+1} E^{s}_{s}}=E^{r+s+1}_{r+s+1},\) and
\bas
[\mathcal{Z}^{r+1}_{r+1}, E^{s}_{s}] &=& -\dfrac{r+1}{(r+2)^2_{s+1}}F^{r+s+1}_{r+s+1}+
\Le(\dfrac{s-r-1}{(r+3)}+\dfrac{(s)_2^2}{(r+2)_1^2(r+s+3)}\Ri)E^{r+s+1}_{r+s+1}.
\eas Hence,
\ba\nonumber
&&d^{r+s+1,s+1}\left(\mathcal{Z}^{r+1}_{r+1}+\mathcal{Z}^{0}_{1, r}+\mathcal{E}^{0}_{1,r}, \0, \Le(\dfrac{sb_{s}}{2(s+1)}+\dfrac{(s)^2_2}{3(s+1)^2_2}\Ri)E^{s+1}_{s+1}-\dfrac{1}{2(s+2)_1^2}F^{s+1}_{s+1}; Z^{r+1}_{r+1}+Z^{0}_{1}, \0\right)\\
&&\label{Z01F}= -\dfrac{b_{s}(r+1)}{s+2} F^{r+s+1}_{r+s+1},
\ea due to the equality
\begin{equation*}
\frac{r(r-s)-(s+2)^{2}}{(r+2)^2_{s+1}}-\frac{r}{s+r+3}+\frac{(s+2)_r^2}{(r+2)^2_{s+1}}=0.
\end{equation*} Equation \eqref{Z01F} infers that \(F^{r+s+1}_{r+s+1}\) can be simplified. However, equation \eqref{Zmm} indicates that \(F^{m+r}_{m+r}\)-terms for each \(m>0\) have already been simplified in the \(r+1\)-level and no further simplification of \(E\)-terms is possible at this stage.
\end{proof}

\section{ Universal asymptotic unfolding normal form }\label{secPNF}

The conventional approach for local bifurcation analysis of singular parametric differential systems is first to fold the system by setting the parameters to zero and then find the normal form of the folded system. Next, the normalized system is unfolded by adding extra parameter depending terms such that the (versal) unfolded normal form system contains qualitative properties (invariant under an equivalence relation) associated with any small perturbation of the original system; \eg see \cite{YuNonlinearity}. In fact the unfolding also accommodates all possible modeling imperfections. However, this approach has two major disadvantages. Firstly, this approach does not provide the actual relations between the original parameters of a parametric system and the unfolding parameters. This effectively prevents its implementation to bifurcation control. Secondly, most singular systems do not have universal unfolding since many qualitative dynamics of systems can not be determined by any finite jet; see \cite[Page 685]{Murd09} and \cite{BroerHopfZero}. The later explains the reason why we use the notion of universal {\it asymptotic} unfolding normal form. Further, we use a parametric orbital normal form computation in order to compute the parameter relations. This section is devoted to treat Hopf-zero singularities (whose the first few dominant terms are solenoidal) with any possible additional nonlinear-degeneracies. Here, the \(n\)-equivalence relation (\(n\)-jet determined) is used for introducing universal asymptotic unfolding normal form, whose original ideas are due to \cite{MurdBook,Murd09,Murd98} and is amenable to finite normal form computations.

Consider the parametric differential equation
\be\label{PEq1}
\dot{x}:= f(x, y, z, \mu), \; \dot{y}:=z+g(x, y, z, \mu), \; \dot{z}:=-y+h(x, y, z, \mu),\qquad (x, y, z)\in \mathbb{R}^3, \mu\in \mathbb{R}^p.
\ee Here, \(f, g\) and \(h\) are nonlinear formal functions in terms of \((x, y, z, \mu)\) and also they are nonlinear in terms of \((x, y, z)\) when they are evaluated at \(\mu=0\). Remark that the results presented in this paper can be easily generalized to smooth cases using Borel--Ritt theorem
\cite[Theorem A.3.2]{Murd09}.
Equation \eqref{PEq1} represents a multiple parametric perturbation of equation \eqref{Eq1}. Through a sequence of primary shift of coordinates (shifts in \(y\) and \(z\)-variables), we may assume that \(g(0, 0, 0, \mu)= h(0, 0, 0, \mu)= 0\) for all \(\mu\in \mathbb{R}^p\); see the primary and secondary shift of coordinates on \cite[Page 373]{MurdBook} and \cite{Murd98}. Next, it is easy to observe that the vector space spanned by all \(F^i_j, E^i_j, \Theta^i_j\) is the same as the vector space spanned by all resonant vector fields; see \cite[Page 54]{ChowBook}. Therefore, a normal form of equation \eqref{PEq1} can be chosen as \ba\nonumber
v^{(1)}&:=& \sum c_{00\m}\Theta^0_0\mu^\m+\sum a_{-1,-1\m}F^{-1}_{-1}\mu^\m+ \sum a_{-1,0\m} F^{-1}_0\mu^\m+ \sum a_{ij\m}F^i_j\mu^\m
\\\label{1PNF}&&+ \sum b_{ij\m}E^i_j\mu^\m+ \sum c_{ij\m}\Theta^i_j\mu^\m,
\ea where \(c_{00\0}=1\) and \(a_{-1,-1\0}=0.\) It is known that the coefficients given in equation \eqref{1PNF} are not unique and further simplification of \eqref{1PNF} is possible. Using a parametric version of Lemma \ref{theta00}, we may omit \(\sum c_{00\m}\Theta^0_0\) from the system.
Now define the grading function by
\be\label{Pdelta} \delta(F^l_k\mu^\m)=\delta(E^l_k\mu^\m)=\delta(\Theta^l_k\mu^\m)=k+2|\m|.\ee
By similar comments following \cite[Lemma 3.1]{GazorMokhtari} and assuming that \(a_{-1,0\0}\neq 0,\) we can modify \(a_{-1,0\0}\) into \(1.\) Since \({[F^0_{0},F^{-1}_0]}=-2F^{-1}_{0}\), we may simplify all \(F^{-1}_0\mu^\m\)-terms for nonzero \(\m.\) Then, the formulas given in the proof of \cite[Lemma 3.1]{GazorMokhtari} imply that the vector field can be transformed into
\be\label{2ndlevelPar}
v^{(2)}:= F^{-1}_0+ \sum_{i\geq -1} a_{i\m}F^i_i\mu^\m+ \sum_{i\geq 0} b_{i\m}E^i_i\mu^\m+ \sum_{i>0}c_{i\m}\Theta^i_i\mu^\m,
\ee denoting \(a_{-1\m}\) for \(a_{-1,-1\m}\). Define \(r, s\) by
\be\label{Prs} r:=\min \{i\,|\, a_{i\0}\neq0\}\quad \hbox{ and } \quad s:=\min \{i\,|\, b_{i\0}\neq0\}.\ee
We assume that
\be\label{Prls} r<s<\infty.\ee
For further simplification, we apply a new grading structure (compare with equation \eqref{delta1}) generated by
\ba\label{Pardelta1}
\delta(F^{l}_{k}\mu^\m)=\delta(E^{l}_{k}\mu^\m)=r(k-l)+k+(r+1)|\m|, \; \delta(\Theta^{l}_{k}\mu^\m)=r(k-l)+k+s+(r+1)|\m|.
\ea
This grading facilitates the use of results from non-parametric orbital normal forms (\ie Theorem \ref{r+1level}) for parametric cases. Given
\be\label{Z00}
Z^{0}_{0}\Bbb{F}_{r}+ {\left[\dfrac{1}{2}F^0_0, \Bbb{F}_{r}\right]}= \dfrac{a_{r}(r+2)}{2} F^{r}_{r},
\ee and a sequence of secondary shifts in \(x\)-variable using the equation
\be
F^{r}_{r}(x+c(\mu),\rho)= F^{r}_{r}(x,\rho)+ (r+1)c(\mu)F^{r-1}_{r-1}(x,\rho)+ \mathcal{O}(\mu^2)
\ee where \(c(\0)=0,\) we may transform equation \eqref{2ndlevelPar} into the \((r+1)\)-th level parametric normal form
\be\label{r+1levelPar}
v^{(r+1)}:= F^{-1}_0+ a_r F^r_r+ \sum_{-1\leq i<r-1} a_{i\m}F^i_i\mu^\m+ \sum_{0\leq i<s, } b_{i\m}E^i_i\mu^\m+\sum^{\infty}_{k=s}\beta_{k\m}E^{k}_{k}\mu^\m+\sum^{r}_{k=1}\gamma_{k\m}\Theta^{k}_{k}\mu^\m,
\ee for \(a_r:={\rm sign}(a_{r\0}).\) Further, denote \(\beta_s:=\beta_{s\0}\).
Here, \(\beta_{k\m}=0\) for \(k\equiv_{2(r+1)}-1\) and any nonnegative integer-valued vector \(\m.\) Equation \eqref{Z00},
\bes[E^0_0, \Bbb{F}_{r}]= rF^r_r, [F^0_0, E^s_s]= sE^s_s \quad \hbox{ and } \quad [E^0_0, E^s_s]= sE^s_s\ees imply that \(E^s_s\mu^\m\in {\rm im }\,d^{s+2|\m|, s+1}\) when \(\m\neq\0\) and \(s-r \neq 0.\)

For our convenience we define
\bes
N:=r+s-\left\lfloor\frac{s}{2(r+1)}\right\rfloor,
\ees
and a sequence of natural numbers by
\bes
\left\{k_i\,|\, i\in \mathbb{N}, i>r \right\}:= \left\{k\,\Big|\, k\neq s, k\geq 0,\hbox{ and } \frac{k+1}{2(r+1)}, \frac{k-s}{2(r+1)}\not\in \mathbb{N}\right\}.
\ees
Now we are ready to state one of the main results of this paper.
\begin{thm}[Universal unfolding]\label{MainThm}
Assume that the condition \eqref{Prls} holds.  Then,
\begin{itemize}
  \item[I.] there exist an infinite sequence of formal parametric functions \(\nu_i(\mu_j)\) and the finite sequence of formal functions \(\omega_i(\mu_j)\) (for \(1\leq i\leq r\)) such that equation \eqref{PEq1} is equivalent to
\ba\nonumber
v^{(s+1)}&:=& \Theta^0_0+ F^{-1}_0+ a_r F^r_r+ \beta_{s}E^{s}_{s}+ \sum_{1\leq i\leq r}\nu_{i}F^{i-2}_{i-2}+\sum^{N}_{i=r+1}\nu_{i}E^{k_i}_{k_i}\\
&&\label{Univsl}+\sum^{\infty}_{i=N+1}(\beta_{k_i}+\nu_{i})E^{k_i}_{k_i}+\sum^{r}_{i=1}(\gamma_{i}+\omega_i)\Theta^{i}_{i}.
\ea
  \item[II.] The differential system
\ba\label{FinPNF}
\dot{x}&=& 2\rho^2+ a_r x^{r+1}+\beta_sx^{s+1}+\sum_{1\leq i\leq r}\nu_{i}x^{i-1}+\sum_{i=r+1}^{N}\nu_{i}x^{k_{i}+1}, \\\nonumber
\dot{\rho}&=& -\dfrac{ a_r(r+1)}{2}x^r\rho+\frac{1}{2}\beta_sx^{s}\rho-\sum_{1\leq i\leq r}\frac{(i-1)}{2}\nu_{i}x^{i-2}\rho+\frac{1}{2}
\sum_{i=r+1}^{N}\nu_ix^{k_i}\rho,
\\\nonumber
\dot{\theta}&=&1+\sum^{r}_{i=1}(\gamma_i+\omega_{i})x^{i},
\ea is a \(s+1\)-universal asymptotic unfolding normal form for the differential system \eqref{PEq1}.
\end{itemize}
\end{thm}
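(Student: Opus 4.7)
The plan for Part I is to mirror, in the multiple-parameter setting, the non-parametric reduction from \eqref{First2ndlevel} to Theorem \ref{ONFs+1level}, but using the modified grading \eqref{Pardelta1} which weights each parameter monomial $\mu^{\m}$ by $(r+1)|\m|$. First, I would put \eqref{PEq1} into the first-level parametric normal form \eqref{1PNF} via primary shifts in $y,z$ followed by conversion to cylindrical coordinates and expansion in the resonant basis. The term $\Theta^0_0$ is temporarily removed via the parametric analogue of Lemma \ref{theta00}, all $F^{-1}_0\mu^{\m}$ with $\m\neq \0$ are eliminated using $[F^0_0,F^{-1}_0]=-2F^{-1}_0$, and the argument of \cite[Lemma 3.1]{GazorMokhtari} yields the second-level form \eqref{2ndlevelPar}. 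After rescaling $a_{r\0}$ to $\delta=\pm 1$, identity \eqref{Z00} combined with a chain of secondary parametric shifts $x\mapsto x+c(\mu)$ (which propagate lower-order contributions into $F^r_r$) absorbs all $F^r_r\mu^{\m}$ with $\m\neq\0$ as well as $F^i_i\mu^{\m}$ for $i\geq r-1$, bringing the system to \eqref{r+1levelPar}.

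To climb from the $(r+1)$-th to the $(s+1)$-th level, I would promote Corollary \ref{lemm23} and Lemmas \ref{Lemm3.6}, \ref{Lemm3.7}, \ref{Lemm3.8} to parametric statements by multiplying every transformation generator and both sides of each bracket identity by $\mu^{\m}$; this is legitimate because $\mu^{\m}$ commutes with the Lie bracket and the $\mathcal{R}$-action on $\LST$. The parametric Corollary \ref{lemm23} kills $\Theta^i_i\mu^{\m}$ for $i>r$ and the residual $F^i_i\mu^{\m}$. The three parametric Lie-algebra identities yield $E^s_s\mu^{\m}\in{\rm im}\, d^{s+2|\m|,s+1}$ for $\m\neq\0$ (using $s\neq r$), together with $E^{(2k+1)(r+1)+r}_{(2k+1)(r+1)+r}\mu^{\m}$ and $E^{2k(r+1)+s}_{2k(r+1)+s}\mu^{\m}$ both in ${\rm im}\, d^{\cdot,s+1}$. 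The $E^k_k\mu^{\m}$ that persist at the $(s+1)$-level are thus exactly those indexed by the sequence $\{k_i\}$, with the cutoff $N=r+s-\lfloor s/(2(r+1))\rfloor$ imposed by the $(s+1)$-jet truncation. Finally, the residual kernel generators $(\mathcal{Z}^{0}_{1,r},\mathcal{E}^0_{1,r},\mathcal{Z}^{r+1}_{r+1})$ are handled exactly as in the non-parametric computation via equation \eqref{Z01F}: the only additional term they could simplify is $F^{r+s+1}_{r+s+1}$, which has already been removed at the previous level. Reinstating $\Theta^0_0$ through $\varphi^{-1}$ then produces \eqref{Univsl}.

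For Part II, converting \eqref{Univsl} back to cylindrical coordinates via the explicit formulas for $F^{i-2}_{i-2}$ and $E^{k_i}_{k_i}$ reproduces \eqref{FinPNF} coefficient by coefficient. Condition (a) of the universal-asymptotic-unfolding definition is then exactly the content of Part I applied to an arbitrary small perturbation of the reference system, and the polynomial maps $\nu(\epsilon)$, $\omega(\epsilon)$ are read off as the surviving normal-form coefficients. Condition (b) follows because the $(s+1)$-jet of \eqref{FinPNF} at $\nu=\0$ coincides with the $(s+1)$-jet of the simplest orbital normal form supplied by Theorem \ref{ONFs+1level}, provided the formal basis style gives priority of elimination to parameter-independent terms over parameter-dependent ones, as required by the existence theorem for universal asymptotic unfolding normal forms. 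The main technical obstacle I expect is the parametric version of Lemma \ref{Lemm3.8}: one must verify that the combinatorial identity underlying \eqref{E2k} continues to hold after multiplication by $\mu^{\m}$, so that $E^{2k(r+1)+s}_{2k(r+1)+s}\mu^{\m}$ is removable precisely when $\m\neq\0$ while its non-removability at $\m=\0$ is preserved. Once this bookkeeping is settled, uniqueness of $\nu(\epsilon)$ modulo degrees not affecting the $(s+1)$-jet follows from the uniqueness of the simplest orbital normal form coefficients already asserted in the universal-unfolding theorem of the introduction.
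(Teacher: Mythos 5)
Your proposal is correct and follows essentially the same route as the paper: the paper's own proof is a single sentence asserting that, starting from the parametric normal form \eqref{r+1levelPar}, one derives a parametric version of the formulas of Theorem \ref{ONFs+1level} (i.e.\ multiplies the transformation generators and bracket identities of Corollary \ref{lemm23} and Lemmas \ref{Lemm3.6}--\ref{Lemm3.8} by \(\mu^{\m}\) under the grading \eqref{Pardelta1}), with uniqueness of \(\nu_i(\mu_j)\) and \(\omega_i(\mu_j)\) coming from uniqueness of the \(s+1\)-jet of the orbital normal form --- exactly the program you spell out in detail. One minor slip in your closing aside: for \(k\geq 1\) the terms \(E^{2k(r+1)+s}_{2k(r+1)+s}\) are removable at \(\mu=\0\) as well (that is the content of \(\beta_k=0\) for \(k\equiv_{2(r+1)}s\), \(k>s\) in Theorem \ref{ONFs+1level}); only the leading \(\beta_sE^s_s\) survives, but this does not affect your argument.
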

\bpr
Given the parametric normal form equation \eqref{r+1levelPar},
the proof readily follows by deriving a parametric version of formulas in Theorem \ref{ONFs+1level}. The uniqueness of each polynomial map \(\nu_i(\mu_j)\) and \(\omega_i(\mu_j)\) follows from the uniqueness of the \(s+1\)-jet of orbital normal form.
\epr

\section{ Bifurcation analysis and finite determinacy  }\label{SecBF}

In this section, we prove that the \(2\) and \(3\)-equivalence relations are compatible with the contact equivalence relation for the cases of
\(r:=1\) and \(2\), respectively. Therefore, the steady-state bifurcations and stabilities of equilibria associated with the amplitude system are \(2\) and
\(3\)-determined, respectively. When \(r:=1,\) the secondary Hopf bifurcation is \(3\) and \(5\)-determined for \(s:=2\) and \(3,\) respectively. The proofs follow a systematic approach and well-established theory; \eg see \cite{GolubitskyStewartBook}. We analyze the local bifurcations of equilibria, limit cycles, and secondary Hopf bifurcation of invariant tori of the appropriate finite jet normal form system. For these cases, the hypernormalization up to \(s+1\)-level is necessary and sufficient for analysis and control of these primary and secondary bifurcations. Moreover, the coefficients of the \(s+1\)-level normal form are interestingly sufficient to directly determine the existence and stability type of secondary Hopf bifurcation to invariant tori. The latter indeed only depends on the appropriate unfolding parameter along with \(\beta_2\) and \(\beta_4\) for \(s:=2\) and \(3,\) respectively; see subsections \ref{522} and \ref{523}.

\subsection{Finite determinacy}\label{SubsecFinite}

Finite determinacy or jet-sufficiency is one of the most important challenges in local bifurcation analysis of normal form singular systems. The question is if a finite jet, say \(k,\) of the normal form system has the same qualitative behavior as the original system and no information would be lost by only considering a \(k\)-jet of its normal form. In the affirmative case, we say that the system is \(k\)-determined. In this section we are concerned about the roots of amplitude normal form system. When \(k\)-jet sufficiency is proved, the root bifurcations for a \(k\)-jet of normal form correspond to bifurcations of original system. We use {\it contact-equivalence} in this section, that is, the most natural equivalence relation preserving local roots of smooth maps (germs); see \cite{GazorKazemi} and \cite[Page 166]{GolubitskyStewartBook}.

Finitely determined results for Hopf-zero singularities of codimension two have been reported in the literature using \(C^0\)- and weak \(C^0\)-equivalences; \eg see \cite{TakensSingularities,DumortierSingularitiesR3}. We recall that the three-dimensional Hopf-zero singularity may demonstrate complex dynamical behaviors such as births/deaths of {\it invariant tori}, {\it phase locking}, {\it chaos}, {\it strange attractors}, {\it heteroclinic orbit breakdowns} and {\it \v{S}il'\'{n}ikov bifurcations} which may not be detected by singularity theory and/or normal form methods; see \cite{LangfHopfSteady,LangfTori,LangfHopfHyst,LangfCuspHopf}. In fact we merely address the bifurcation problem of equilibria, limit cycles and secondary Hopf bifurcations of invariant tori; also see \cite{LangfHopfHyst}.

We first claim that the normal form computation of vector fields and application of results from singularity theory are compatible.
It is well-known that for any smooth differential system \eqref{Eq1}, there always exist \(C^\infty\)-smooth changes of coordinates to transform \eqref{Eq1} to a smooth differential system \eqref{First2ndlevel} modulo flat parts; \eg see \cite[Theorem 1]{Murd09}. The transformed equation is further reduced by ignoring the phase component to obtain the {\it amplitude system}. Since further smooth changes of coordinates and time rescaling transform a smooth germ to other contact-equivalent germs, we may instead work with a reduced system obtained from a universal asymptotic unfolding normal form.

Throughout this subsection we follow the notations, terminologies and results of singularity theory from \cite[Chapter XIV]{GolubitskyStewartBook}.
The bifurcations of limit cycles and equilibria are in one-to-one correspondence with those of equilibria for the amplitude system. Hence, we define the map \(F=(F_1, F_2)\) by
\be\label{gr}
F:=\left(\nu_1+ \nu_2 x+2\rho^2+a_r x^{r+1}+\beta_{s}x^{s+1}+\hot, \frac{\nu_2\rho}{2}-a_r x^{r}\rho+\dfrac{\beta_sx^s\rho}{2}+\hot\right).
\ee Here, \(\hot\) stands for smooth maps whose \(s+1\)-jet is zero. The distinguish parameter \(\lambda\) is chosen as \(\nu_2\) for \(r:=1,\) \(\nu_1\) for \(r:=2,\) and remove the remaining parameter by setting to zero.
\subsubsection{The case \(r:=1\). }
We let \(\lambda:=\nu_2\) and \(\nu_1:=0.\) The symmetry group \(\Gamma\) is the trivial (identity) group and thus, it is removed in our notations. Since \(r:=1,\)
\be\label{gr1}
F(x, \rho, \lambda):=\left(\lambda x+2\rho^2+a_1 x^{2}+\beta_{s}x^{s+1}+\hot, \frac{\lambda\rho}{2}-a_1 x\rho+\dfrac{\beta_sx^s\rho}{2}+\hot\right).
\ee
Now we recall some notations from \cite[Chapter XIV]{GolubitskyStewartBook}. The local ring of all {\it smooth scalar germs} in \((x, \rho, \lambda)\)-variables is denoted by \(\E_{x, \rho, \lambda}\). Next we define \(\mathcal{M}\) as the unique maximal ideal of \(\E_{x, \rho, \lambda},\) \ie \(\mathcal{M}:=<x, \rho, \lambda>_{\E_{x, \rho, \lambda}}.\) Further denote \(\overrightarrow{\mathcal{M}}\) for a module over \(\E_{x, \rho, \lambda}\) defined by
\begin{equation}\label{Mcal}
\overrightarrow{\mathcal{M}}:=\bigg<{x\choose 0}, {0\choose x}, {\rho \choose 0}, {0\choose \rho}, {\lambda \choose 0}, {0\choose \lambda }\bigg>_{\E_{x, \rho, \lambda}}.
\end{equation}
Now one can imagine notations like \(\overrightarrow{\mathcal{M}}^3\) and \(\mathcal{M}^3.\) The module \(\overrightarrow{\mathcal{M}}^3\) includes all vector fields whose two-jet is zero; in particular \(\overrightarrow{\mathcal{M}}^3\) includes all flat vector fields. Further, \(\overrightarrow{\mathcal{M}}^3= \mathcal{M}^3{1\choose 0}+ \mathcal{M}^3{0\choose 1}.\)

\begin{lem}\label{Lem5.1} The map (germ) \(F(x, \rho, \lambda)\) is contact equivalent to \(F+p\) for any \(p\in \overrightarrow{\mathcal{M}}^3.\)
\end{lem}
\bpr
We follow \cite[Definition 7.1, Proposition 1.4, Theorem 7.2 and Theorem 7.4]{GolubitskyStewartBook} and instead prove that \(\overrightarrow{\mathcal{M}}^{3}\subseteq\mathcal{K}_s(F).\) The \(\E_{x, \rho, \lambda}\)-module \(\mathcal{K}_s(F)\) is the intrinsic part of the module generated by
\begin{equation} \label{Kappa}\mathcal{M}^2{F_{1x} \choose F_{2x}}, \mathcal{M}^2{F_{1\rho} \choose F_{2\rho}}, \mathcal{M}{F_{1} \choose 0}, \mathcal{M}{F_{2} \choose 0}, \mathcal{M}{0 \choose F_{1}}, \mathcal{M}{0 \choose F_{2}}.
\end{equation}
We choose \(a_1:=1\) to simplify the formulas. For any \(\E_{x, \rho, \lambda}\)-modules \(J\) and \(\mathcal{K}_s,\) the Nakayama's lemma
\cite[Facts 2.4iii, Page 251]{GolubitskyStewartBook} implies that \(J\subseteq \mathcal{K}_s\) if and only if \(J\subseteq \mathcal{K}_s+ \mathcal{M}J.\) Given \(J:=\overrightarrow{\mathcal{M}}^3\) and \(\overrightarrow{\mathcal{M}}^4=\mathcal{M}J,\) we denote \(\cong\) for the equations modulo \(\overrightarrow{\mathcal{M}}^4.\) Since
\bas
&\rho\left(\!\!\begin{array}{c}
F_1\\
0\end{array}\!\!\right)-2x\left(\!\!\begin{array}{c}
F_2\\0
\end{array}\!\!\right)\cong \left(\!\!\begin{array}{c}
3x^{2}\rho+2\rho^3\\0
\end{array}\!\!\right),
x\rho\left(\!\!\begin{array}{c}
F_{1x}\\
F_{2x}\end{array}\!\!\right)-2x\left(\!\!\begin{array}{c}
F_2\\0
\end{array}\!\!\right)
\cong \left(\!\!\begin{array}{c}
4x^2\rho\\-x\rho^2
\end{array}\!\!\right), &
\\&\rho x\left(\!\!\begin{array}{c}
F_{1x}\\F_{2x}
\end{array}\!\!\right)-\rho\left(\!\!\begin{array}{c}
F_1\\0
\end{array}\!\!\right)
\cong \left(\!\!\begin{array}{c}
x^2\rho-2\rho^3\\-x\rho^2
\end{array}\!\!\right),&
\eas
we have \({x^2\rho \choose 0 }, {\rho^3 \choose 0 }, {0 \choose x\rho^2}\in \mathcal{K}_s+\mathcal{M}^4.\) Further,
\begin{equation*}
x{F_2 \choose 0 }\cong{\frac{1}{2}x\lambda\rho-x^2\rho \choose 0 }, \lambda{F_2 \choose 0 }\cong{\frac{1}{2}\lambda^2\rho-x\lambda\rho \choose 0 }, \rho{F_{1\rho} \choose F_{2\rho} }-{0 \choose F_{2} }\cong{4\rho^2 \choose 0 }
\end{equation*} infer that \({x\lambda\rho \choose 0 }, {\lambda^2\rho \choose 0 },\) \({\rho^2\lambda \choose 0 },{\rho^2x\choose 0 } \in \mathcal{K}_s\) modulo \(\mathcal{M}^4\). Next, by
\bes
\rho^2{F_{1x} \choose F_{2x} }, \lambda{0 \choose F_2 }, {0 \choose F_1 }+x{F_{1\rho} \choose F_{2\rho} }\cong {4x\rho \choose 2\rho^2+\frac{3}{2}x\lambda}, x{0 \choose F_2 }, x{ F_{1x}\choose F_{2x} }-2{F_1 \choose 0 }\cong {-x\lambda \choose x\rho }, \rho{0 \choose F_2 },
\end{equation*} we may imply the membership of \({0 \choose \rho^3 },{0 \choose x\lambda\rho }, {0 \choose \lambda^2\rho }, {0 \choose x^2\rho }, {\lambda x^2 \choose 0 }, {0 \choose \lambda\rho^2}\) in \(\mathcal{K}_s+\mathcal{M}^4\). Finally,
\begin{equation*}
2 \lambda{F_1 \choose 0 }-\lambda x{F_{1x} \choose F_{2x} }\cong{x\lambda^2+4\lambda\rho^2 \choose x\lambda\rho }, \lambda^2{F_{1x} \choose F_{2x} }, {0 \choose F_1 }+x{F_{1\rho} \choose F_{2\rho} }\cong {4x\rho \choose 2\rho^2+\frac{3}{2}x\lambda}, \lambda^2{F_{1\rho} \choose F_{2\rho} },
\end{equation*} and \(x{0 \choose F_1 }\) conclude the memberships of \({x\lambda^2 \choose 0 }, {\lambda^3 \choose 0},\) \({0 \choose \lambda x^2 }, {0 \choose x\lambda^2 }, {0 \choose \lambda^3 }\) and \({0 \choose x^3 }\). Now we have proved that all generators of \(\overrightarrow{\mathcal{M}}^3\) belong to \(\mathcal{K}_s(F)\) modulo \(\overrightarrow{\mathcal{M}}^4\). This completes the proof by the Nakayama lemma.
\epr

\begin{rem}
Lemma \ref{Lem5.1} implies that any perturbation \(F+p\) for \(p\in \overrightarrow{\mathcal{M}}^3,\) including the flat perturbations, of the map \(F\) in
\eqref{gr1} is contact-equivalent to \(F.\) Thus, the number of equilibria and limit cycles for the original three dimensional system is \(2\)-determined and is invariant under flat perturbations. This plays an important role in application of normal forms to bifurcation theory due to the Borel lemma, that is, any formal normal form is a smooth normal form modulo a flat function. We recall that subordinate \u{S}il'nikov homoclinic intersections in Hopf-zero singularities is not invariant under flat perturbations and hence, their bifurcations can not be treated by crude uses of normal form methods; see \cite{BroerHopfZero}.

\end{rem}

\begin{cor}
The \(2\)-jet of the map \(F\) in equation \eqref{gr} is a universal unfolding for \(r:=1\) with respect to \(\mathbb{Z}_2\)-contact equivalence relation, where \(\mathbb{Z}_2:=\{\sigma, I\}\) and \(\sigma: \mathbb{R}^2\rightarrow \mathbb{R}^2\) is given by \(\sigma (x, \rho):= (x, -\rho).\)
\end{cor}
\bpr
The proof follows the equivariant universal unfolding theorem \cite[Theorem 2.1 and Equation 2.7, Page 211]{GolubitskyStewartBook}, proof of Lemma \ref{Lem5.1}, and \(T(F)=\overrightarrow{\mathcal{M}}(\mathbb{Z}_2).\)
\epr

\subsubsection{The case \(r:=2\). }

In the following lemma we prove that the equilibria of amplitude normal form system associated with equation \eqref{FinPNF} for \(r:=2\) is \(3\)-determined within the family of \(\mathbb{Z}_2\)-equivariant maps. Here, \(\mathbb{Z}_2\) is generated by \(\sigma: \mathbb{R}^2\rightarrow \mathbb{R}^2,\) \(\sigma (x, \rho):= (x, -\rho).\)
Thereby,
\be\label{gr2}
F(x, \rho, \lambda):=\left(\lambda +2\rho^2+a_2 x^{3}+\beta_{s}x^{s+1}+\hot, -\frac{3}{2}a_2 x^2\rho+\dfrac{\beta_sx^s\rho}{2}+\hot\right).\ee

\begin{lem}\label{LemFinr2} The map \(F\) is \(\mathbb{Z}_2\)-contact equivalent to \(F+p\) for any \(p\in \overrightarrow{\mathcal{M}}^4(\mathbb{Z}_2).\)
\end{lem}
\bpr  The proof closely follows the proof of Lemma \ref{Lem5.1}, the \(\mathbb{Z}_2\)-equivariant and invariant structures, Nakayama's lemma, and the following equalities modulo \(\overrightarrow{\mathcal{M}}^5(\mathbb{Z}_2)\):
\bes
p\left(\!\!\begin{array}{c}
F_1\\
0\end{array}\!\!\right)\cong\left(\!\!\begin{array}{c}
p\lambda\\
0\end{array}\!\!\right), p\left(\!\!\begin{array}{c}
0\\
F_1\end{array}\!\!\right)\cong\left(\!\!\begin{array}{c}
0\\
p\lambda\end{array}\!\!\right),
p\left(\!\!\begin{array}{c}
F_{1\rho}\\
F_{2\rho}\end{array}\!\!\right)\cong\left(\!\!\begin{array}{c}
4 p\rho\\
0\end{array}\!\!\right),
\ees for any monomial \(p\) with \(\deg (p)=3,\) and

\bes
x \left(\!\!\begin{array}{c}
0\\
F_{2}\end{array}\!\!\right)\cong
\left(\!\!\begin{array}{c}
0\\
-\frac{3}{2} x^3 \rho \end{array}\!\!\right),
x^2 \left(\!\!\begin{array}{c}
F_{1x}\\
F_{2x}\end{array}\!\!\right)\cong
\left(\!\!\begin{array}{c}
3 x^4\\
-3 x^3 \rho\end{array}\!\!\right),
\rho^2 \left(\!\!\begin{array}{c}
F_{1x}\\
F_{2x}\end{array}\!\!\right)\cong
\left(\!\!\begin{array}{c}
3 x^2 \rho^2\\
-3 x \rho^3 \end{array}\!\!\right).
\ees
\epr

\subsection{ Bifurcation analysis }\label{BifAnala}

In this section we discuss the local bifurcations of equilibria, limit cycles, torus and a heteroclinic cycle. Here, three of the most generic families (\((r:=1, s:=1),\) \((r:=1, s:=2),\) and \((r:=2, s:=3)\)) are considered; see \cite{LangfTori,LangfHopfSteady,LangfHopfHyst,LangfCuspHopf} and the references therein for the existing literature.

\subsubsection{The case \(r:=1\). }

\begin{figure}
\begin{center}
\subfigure{\includegraphics[width=.5\columnwidth,height=.25\columnwidth]{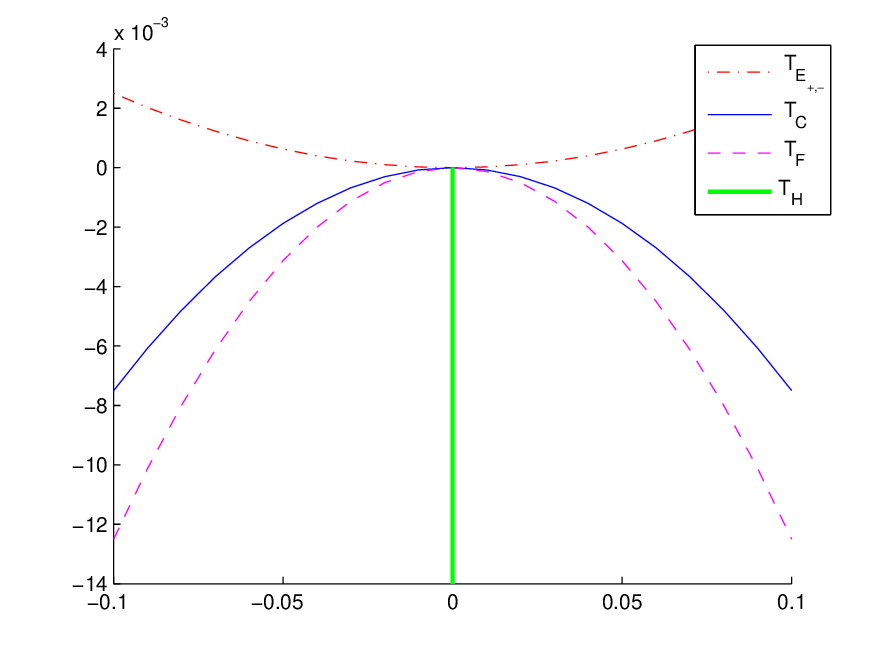}}
\caption{Bifurcation varieties (transition sets) for the system \eqref{EqBF} with \(a_1 =1\): The vertical and horizontal axes stand for \(\nu_1\) and \(\nu_2,\) respectively. }\label{1}
\end{center}
\end{figure}

The amplitude system associated with the two-jet universal asymptotic unfolding normal form \(v\) is given by
\be\label{EqBF}
\dot{x}=\nu_1+2\rho^2+\nu_2x+a_1 x^2, \quad \dot{\rho}=\frac{1}{2}\nu_2\rho-a_1 x\rho.
\ee Note that the \(x\)-axis is always an invariant line. The associated equilibria follow
\bes
E_\pm: (x_{E_\pm},\rho_{E_\pm})=\left(\dfrac{-\nu_2\pm\sqrt{{\nu_2}^2-4a_1 \nu_1}}{2a_1 },0\right)
\hbox{ and } C: (x_C, \rho_C)=\left(\dfrac{1}{2a_1 }\nu_2, \sqrt{-\frac{1}{2}\nu_1-\frac{3}{8a_1 }\nu^2_2}\right).
\ees

The points \(E_\pm\) represent equilibria while \(C\) represents a limit cycle for the three dimensional system.
The transition varieties associated with \(E_\pm\) and \(C\) (depicted in figure \ref{1}) are governed by
\bes T_{E_\pm}:=\Le\{(\nu_1,\nu_2)\mid \nu_1=\dfrac{1}{4a_1 }{\nu_2}^2\Ri\} \hbox{ and } T_{C}:=\left\{(\nu_1,\nu_2)\mid \nu_1=-\dfrac{3}{4a_1 }{\nu_2}^2\right\}.
\ees
Each of these transition varieties \(T_{E_\pm}\) and \(T_{C}\) is associated with a saddle-node bifurcation.

The eigenvalues of the matrices \(Dv(E_{\pm})\) are given by \(\pm\sqrt{{\nu_2}^2-4a_1 \nu_1}\) and \(\nu_2\mp\dfrac{1}{2}\sqrt{{\nu_2}^2-4a_1 \nu_1}.\)
When \((\nu_2<0)\) or \((\nu_2 >0\) and \(4a_1 \nu_1<-3{\nu_2}^2)\) hold, the equilibrium \(E_{+}\) is a saddle point. For \(\nu_2>0,\) \(4a_1 \nu_1>-3{\nu_2}^2,\) \(E_{+}\) is a source. On the other hand, the conditions \(\nu_2>0\) or \(\nu_2 <0\) and \(4a_1 \nu_1<-3{\nu_2}^2\) imply that \(E_{-}\) is a saddle point. However, the conditions \(\nu_2 <0\) and \(4a_1 \nu_1>-3{\nu_2}^2\) conclude that \(E_{-}\) is a sink.

\begin{figure}
\begin{center}
\subfigure[Heteroclinic cycle for \(\nu_2:=0.\)]{\includegraphics[width=.4\columnwidth,height=.23\columnwidth]{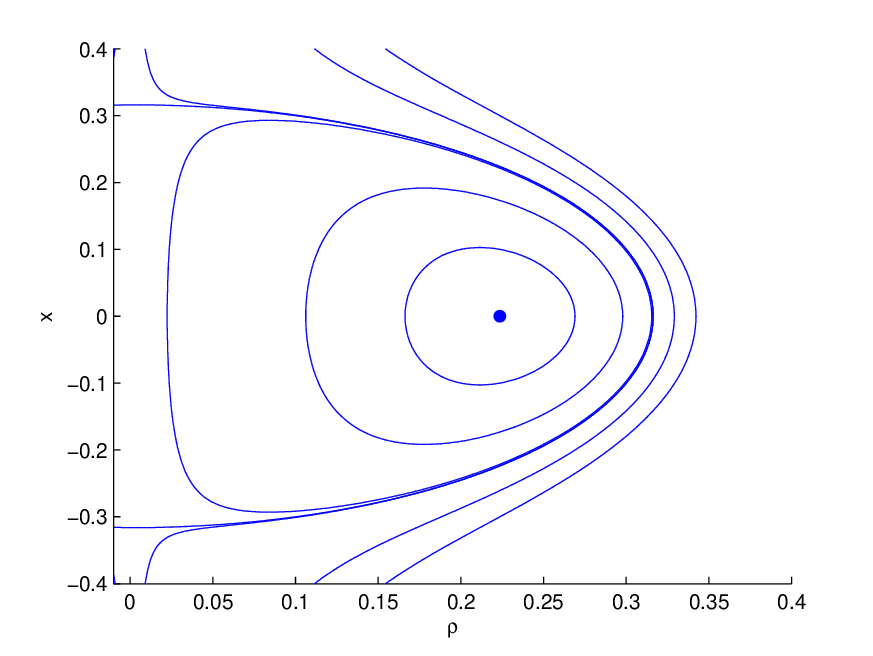}}
\subfigure[Orbits for \(\nu_2:= -0.1.\)\label{Fig2b}]{\includegraphics[width=.4\columnwidth,height=.23\columnwidth]{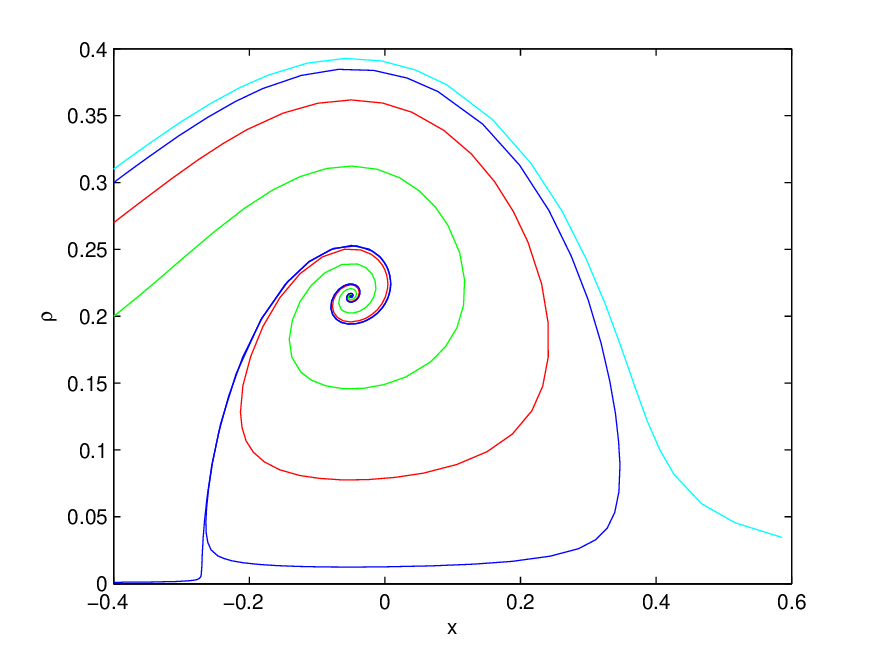}}
\caption{Saddle-saddle connecting cycle surrounding the center point at \(\nu_2:=0\) and its bifurcation to the stable focus point at \(\nu_2:= -0.1\) for the \(2D\) system \eqref{EqBF}, \(a_1:= 1,\) and \(\nu_1:=-0.1.\) }\label{SaddleCon}
\end{center}
\end{figure}
The eigenvalues of \(Dv(C)\) are \(\lambda_\pm=\nu_2\pm\frac{1}{2}\sqrt{8a_1 \nu_1+10{\nu_2}^2}.\) Thus, we define
\be\label{TF}
T_F:= \Le\{(\nu_1,\nu_2)\mid \nu_1=-\dfrac{5}{4a_1 }{\nu_2}^2\Ri\}.
\ee
For \(4a_1 \nu_1<-5{\nu_2}^2,\) \(C\) is a stable/unstable focus point for negative/positve values for \(\nu_2\). If \(-5{\nu_2}^2<4a_1 \nu_1<-3{\nu_2}^2\) holds, for \(\nu_2>0\) the point \(C\) is a source while \(\nu_2 <0 \) concludes that \(C\) is a sink. A pair of pure imaginary eigenvalues occurs at
\bes
T_{H}:=\left\{(\nu_1,\nu_2)\mid \nu_2=0 , a_1\nu_1<0  \right\}.
\ees
\begin{figure}
\begin{center}
\subfigure{\includegraphics[width=.4\columnwidth,height=.23\columnwidth]{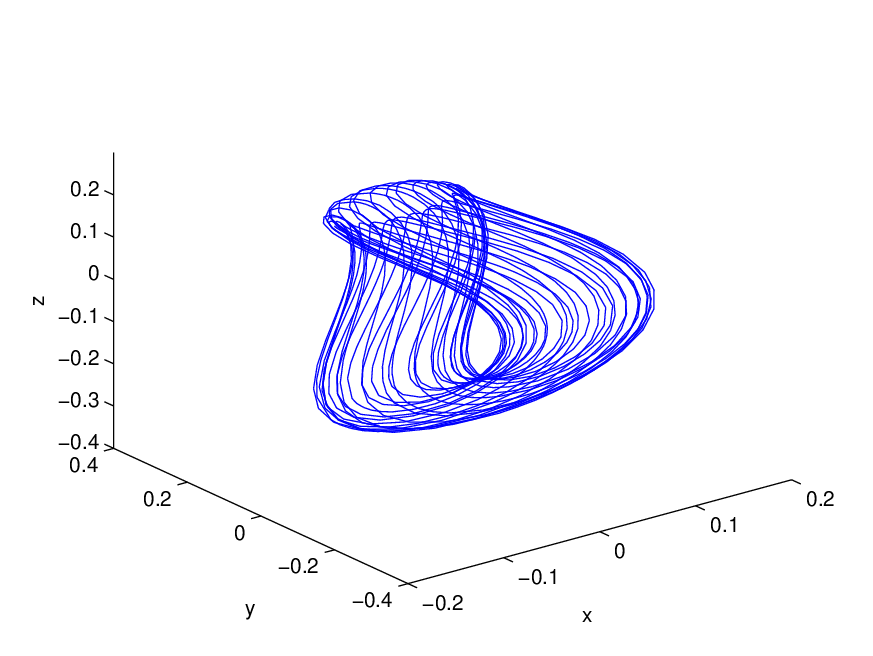}}
\subfigure{\includegraphics[width=.4\columnwidth,height=.23\columnwidth]{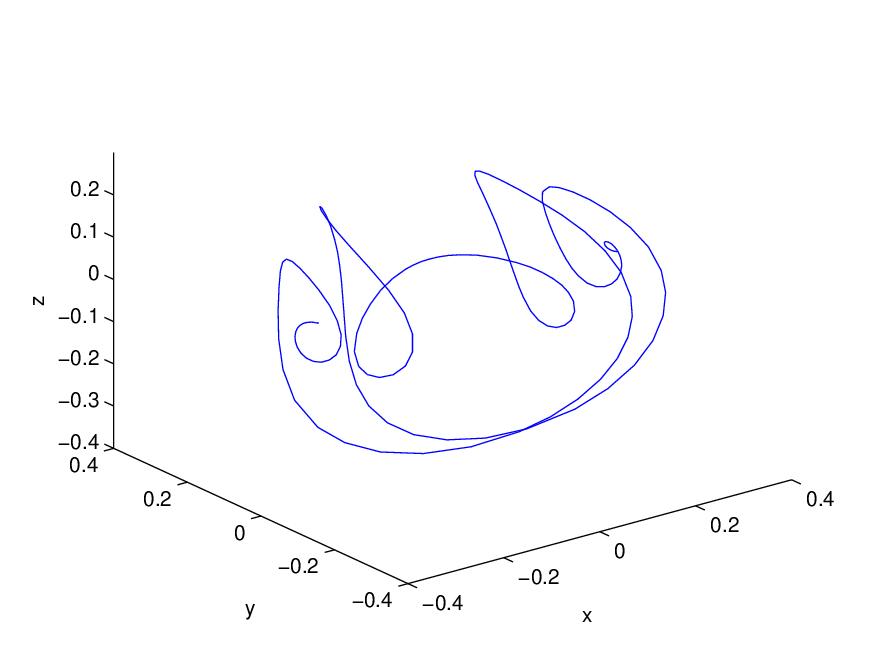}}
\caption{An attracting torus and an estimated saddle-saddle connecting orbit for system \eqref{EqBF} (along with \(\dot{\theta}:=1\) for the \(3D\)-system) at \(\nu_2:=0\), \(a_1:= 1,\) and \(\nu_1:=-0.1.\) Initial values are chosen as \((x, \rho, \theta):= (0.1, 0.2, 0)\) and \((x, \rho, \theta):=(-0.28,0 ,0.02),\) respectively. }\label{SaddleCon3D}
\end{center}
\end{figure}
The system \eqref{EqBF} represents a nonlinear center when the parameters cross the variety \(T_H.\) In fact the system has a first integral given by
\bes
I(x, \rho):= \nu_1\rho^2+ \rho^4+ a_1x^2\rho^2.
\ees This first integral is readily derived given our normal form representation. The saddle equilibria \(E_{+}\) and \(E_-\) belong to saddle-saddle connection cycle surrounding a center in \(2\)-dimensional \((x, \rho)\)-system; see figure \ref{SaddleCon}. The saddle-saddle connection is translated in three dimensional system as an orbit on a \(2\)-sphere surrounding a continuous family of invariant tori around a limit cycle; see figure \ref{SaddleCon3D}. As \(\nu_2\) crosses to positive values, the saddle-saddle heteroclinic cycle breaks; see figure \ref{Fig2b}. However, this heteroclinic cycle is not invariant under \(2\)-equivalence relations and shall not be pursued in our bifurcation control; also see \cite[Page 225]{LangfHopfSteady}.

\subsubsection{The case \(r:=1\) and \(s:=2.\)}\label{522}

Generically, the family of systems for \(r:=1\) at the equilibrium \(C\) undergoes a secondary Hopf bifurcation giving birth to an (attracting) invariant torus for (\(\beta_2<0\) in) the three dimensional system. Furthermore, this secondary bifurcation is three determined for \(\beta_2\neq0\). Consider \(a_1:=1\) in the three-jet system given by
\be\label{EqBFbeta2}
\dot{x}=\nu_1+2\rho^2+\nu_2x+ x^2+\beta_2 x^3+\nu_3x^2, \quad \dot{\rho}=\frac{1}{2}\nu_2\rho-x\rho+ \frac{\beta_2}{2}x^2\rho+\nu_3x\rho,
\ee where \(\beta_2\neq0.\) The qualitative dynamics of equilibria for \eqref{EqBF} and \eqref{EqBFbeta2} are equivalent due to Lemma \ref{Lem5.1}. Thus, we only consider the bifurcation point \(C\) when \(\beta_2\) and \(\nu_3\) are also taken into consideration, whose a Taylor approximation is given by
\bes
(x_C, \rho_C):= \left(\frac{1}{2}\nu_2+\frac{\beta_2}{8}{\nu_2}^2+\frac{1}{4}\nu_2\nu_3, \frac{\sqrt{-2{\beta_2}^2\nu_1-\frac{3}{2}{\beta_2}^2{\nu_2}^2+\frac{3}{4}{\beta_2}^2{\nu_2}^2\nu_3+
\frac{3}{2}\beta_2{\nu_3}^2\nu_2}}{2\,{\rm sign}(\beta_2)\,\beta_2}\right).
\ees
This has a Hopf singularity at \(\nu_2:=0\) and hence,
\be\label{TH}
T_H:=\{(\nu_1, 0, \nu_3)\} \quad \hbox{ and } \quad (x_{C_H}, \rho_{C_H}):= \left(0, \frac{\sqrt{2}}{2}\sqrt{-\nu_1}\right).
\ee By using our {\sc Maple} program \cite{GazorYuSpec,GazorYuFormal}, the three-jet parametric normal form of the system \eqref{EqBFbeta2} in polar coordinates \((\varrho, \vartheta)\) gives rise to
\bes
\dot{\varrho}:= \left(2\beta_2{\nu_2}^2+3\nu_3\nu_2+4\nu_2\right)\varrho+32 \beta_2\varrho^3.
\ees For \(\beta_2>0\) and small values of \(\nu_2<0\) the system experiences an unstable limit cycle while it has a stable limit cycle when \(\beta_2<0\) and \(\nu_2>0\). This is translated to a bifurcation of an attracting invariant torus (for \(\beta_2<0\)) in the original three dimensional system; see figure \ref{AttractingTorus}, where the initial values are chosen as \((x, \rho, \theta):= (-0.57, 0.2, 0).\) It can be seen that \(\nu_1\) and \(\nu_2\) are sufficient to analyze, detect and locate the bifurcations of invariant torus for equation \eqref{EqBFbeta2}. Thus, \(\nu_3\) is omitted in bifurcation controller design.

\begin{figure}
\begin{center}
\subfigure{\includegraphics[width=.4\columnwidth,height=.2\columnwidth]{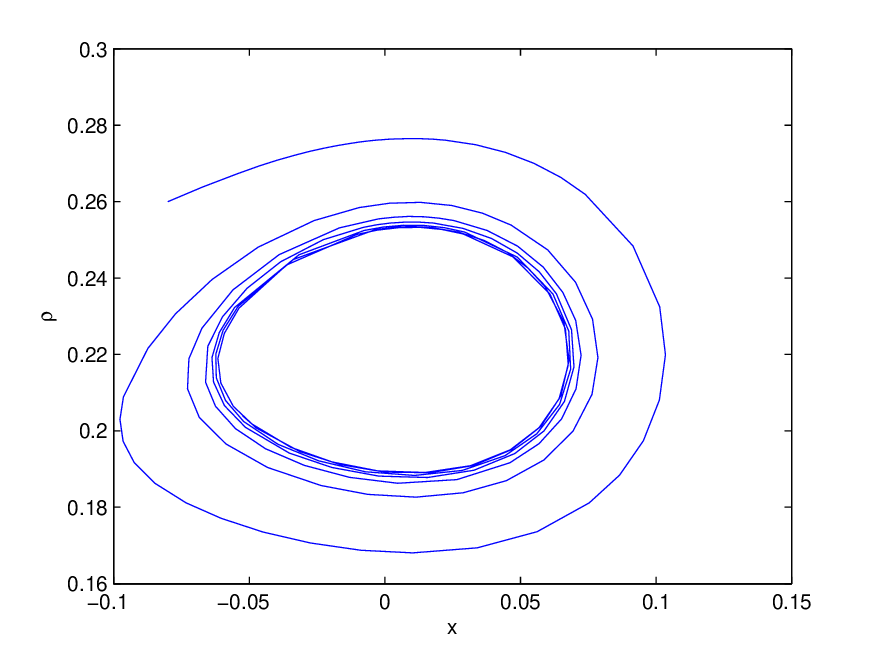}}
\subfigure{\includegraphics[width=.4\columnwidth,height=.2\columnwidth]{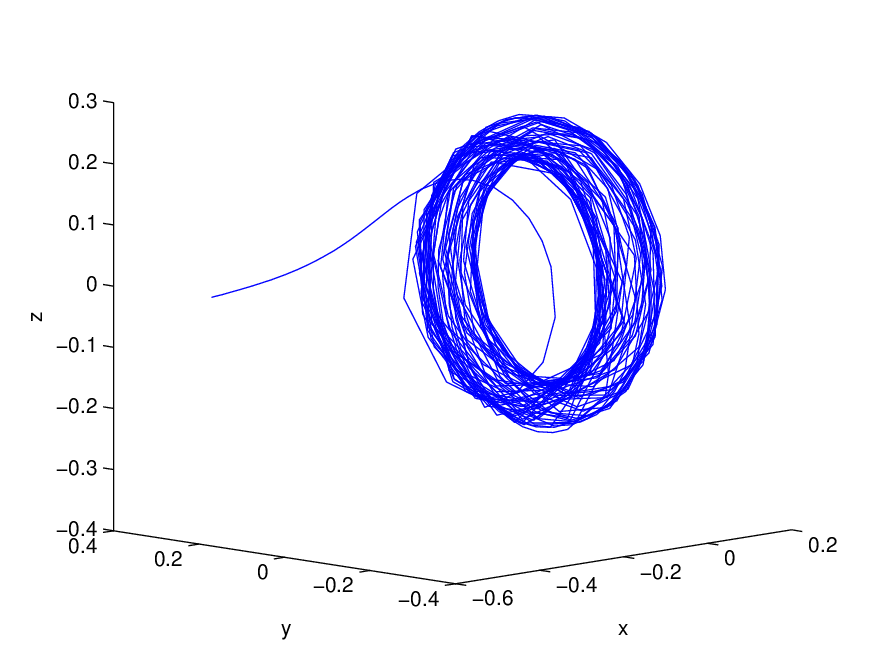}}
\caption{Orbits approaching a stable focus for \(2D\)-system \eqref{EqBFbeta2} and an attracting invariant torus for \(3D\)-system (equation \eqref{EqBFbeta2} along with \(\dot{\theta}:=1\)) at \((\nu_1, \nu_2, \nu_3):=(-0.1, 0.02, 0.1),\) and \(\beta_2:=-10.\) }\label{AttractingTorus}
\end{center}
\end{figure}

\subsubsection{The case \(r:=1\) and \(s:=3.\)}\label{523}

Now we investigate a subordinate bifurcation leading to an invariant torus for a degenerate case where \(a_1:=1,\) \(\beta_{4}\neq0,\) \(r:=1,\) and \(s:=3,\) \ie \(\beta_2:=0, \beta_3\neq0\). This bifurcation is \(5\)-determined and thus, we consider
\begin{eqnarray}\label{2dr1s3}
\dot{x}&=&\nu_{1}+\nu_{2}x+\nu_{{3}}{x}^{2}+{x}^{2}+2{\rho}^{2}+\beta_{3}{x}^{4}+\beta_{4}{x}^{5},\\\nonumber
\dot{\rho}&=&-x\rho+\frac{1}{2}\nu_{2}\rho+\frac{1}{2}\nu_{3}x\rho+\frac{1}{2}\beta_{3}{x}^{3}\rho+\frac{1}{2}\beta_{4}{x}^{4}\rho.
\end{eqnarray} The Hopf singularity occurs at \(\nu_2:=0\), say \(C_H,\) given by equation \eqref{TH} for \(\nu_1<0\). Let \(w:= 2\sqrt{-2\nu_1}\) and treat the bifurcation parameters \((\nu_2, \nu_3)\) as \(||(\nu_2, \nu_3)||=o(|\nu_1|^2).\) By using our Maple program for parametric normal forms of Hopf singularity in polar coordinates \((\varrho, \vartheta)\),
\begin{eqnarray*}
\dot{\varrho}&:=&\left(4\nu_{2}+3\nu_{2}\nu_3+\frac{17}{16}\nu_2{\nu_{3}}^{2}+\frac{33}{32}\beta_{3}{\nu_{2}}^{3}\right)\varrho
\\&&
+\left(99{\frac {\nu_{2}}{{w}^{2}}}+60\beta_3\nu_2+72\beta_4{\nu_{2}}^{2}+\frac{249}{4}\beta_3\nu_2\nu_3+1002\frac{\nu_{2}\nu_{3}}{w^{2}}\right)\varrho^3\\
&&+\left(384\beta_{4}+502{\beta_3}^{2}\nu_{2}+288\nu_3\beta_4+\frac {683617}{12}\frac{\nu_2}{w^4}+{\frac {645385}{24}}{\frac{\beta_{3}\nu_{2}}{w^2}} \right)\varrho^5.
\end{eqnarray*}
Therefore, we have no limit cycle when \(\beta_4\nu_2>0,\) while one stable limit cycle for \(\beta_4<0\) and \(\nu_2>0.\) An unstable limit cycle occurs for \(\nu_2<0\) and \(\beta_4>0.\) Bifurcation of a stable limit cycle here means a secondary bifurcation of attracting invariant torus for the original Hopf-zero system; see figure \ref{AttractingTorusR1S3}.

\begin{figure}
\begin{center}
\subfigure{\includegraphics[width=.4\columnwidth,height=.2\columnwidth]{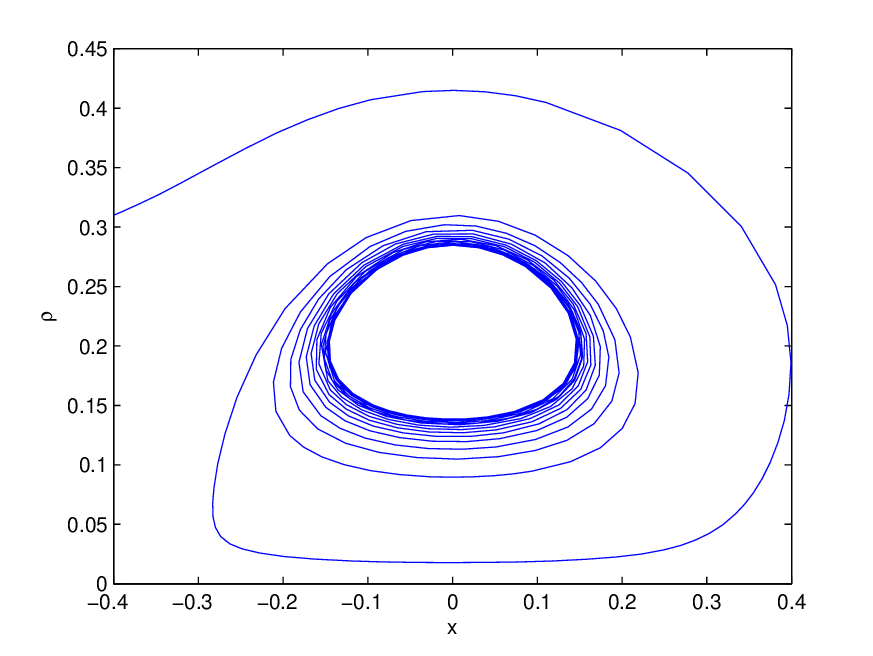}}
\subfigure{\includegraphics[width=.4\columnwidth,height=.2\columnwidth]{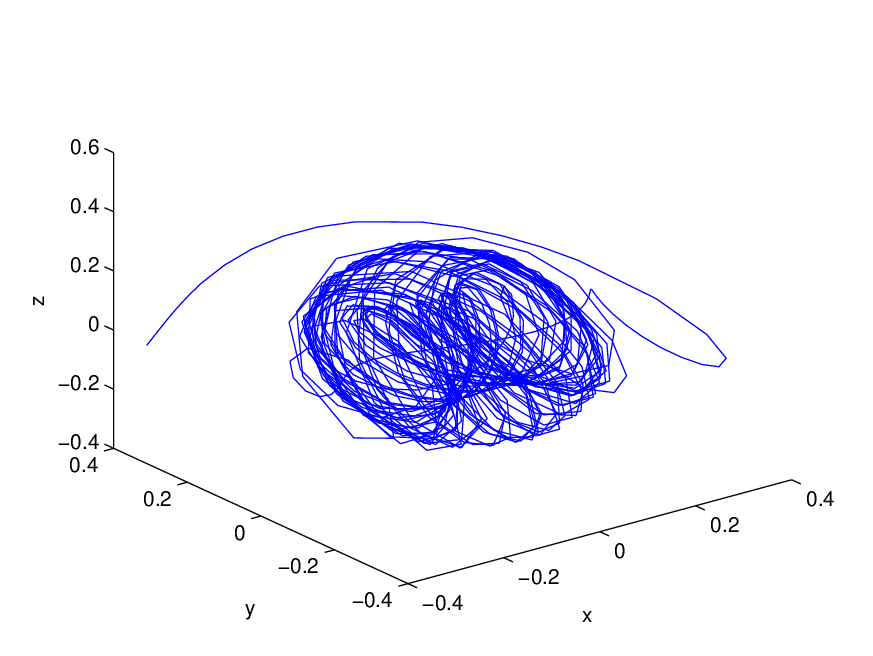}}
\caption{Orbits approaching a stable limit cycle for \(2D\)-system \eqref{2dr1s3} and an attracting invariant torus for \(3D\)-system (equation \eqref{2dr1s3} along with \(\dot{\theta}:=1\)) at \(\nu_1:=-0.1, \nu_2:=0.001, \nu_3:=0.001,\) \(\beta_2:=0, \beta_3:=-1,\) and \(\beta_4:=-10.\) }\label{AttractingTorusR1S3}
\end{center}
\end{figure}

\subsubsection{The case \(r:=2\) and \(s:=3.\)}

This case constitutes a degenerate Hopf-hysteresis singularity; see \cite{LangfHopfSteady,LangfHopfHyst} for a comprehensive literature. For the case of \(r:=2,\) we consider
\begin{eqnarray}\label{Eqr2}
\dot{x}&=&\nu_{{1}}+\nu_{{2}}x+\nu_{{3}}x+2{\rho}^{2}+a_2{x}^{3}+\beta_3x^4,\\\nonumber
\dot{\rho}&=&-\frac{1}{2}\nu_{{2}}\rho+\frac{1}{2}\nu_{{3}}\rho-\frac{3a_2}{2}{x}^{2}\rho+\frac{\beta_3}{2}x^3\rho,
\end{eqnarray} where \(a_2 = \pm1.\) We assume that \(a_2:=1\) and by Lemma \ref{LemFinr2} for bifurcations of equilibria and limit cycles, we merely consider its \(3\)-jet. There are two categories of equilibria for the system \eqref{Eqr2}. The first category is like \(E_i\) given by \((x_i, 0)\) for \(i=1, 2, 3.\) In fact two of these equilibria, say \(E_2\) and \(E_3,\) are born in a saddle-node bifurcation when the parameters cross the variety
\begin{equation*}
T_{E}: \qquad \left\{(\nu_1,\nu_2,\nu_3): 27{\nu_{{1}}}^{2}+4(\nu_{{2}}+\nu_{{3}})^{3}=0\right\}.
\end{equation*} The second category follows
\begin{equation*}
C_\pm: \qquad  \left(x_{C_\pm}, \rho_{C_\pm}\right)=\left(\pm \frac{1}{3}\sqrt{3\nu_3-3\nu_2}, \frac{1}{6}\sqrt {-18\nu_{{1}}\mp 4(\nu_{{2}}+2\nu_3)\sqrt{3\nu_3-3\nu_2}}\right).
\end{equation*} The equilibria \(C_\pm\) represent two limit cycles for the \(3\)-dimensional system (the normal form equation \eqref{Eqr2} along with the phase component). The eigenvalues of \(C_+\) and \(C_-\) are respectively given by
\bes
\nu_{{3}}\pm \frac{\sqrt{3}}{3}\sqrt {11{\nu_{{3}}}^{2}+6\nu_1\sqrt{3\nu_3-3\nu_2}-4\nu_2({\nu_{2}+\nu_3})},
\ees and
\bes\nu_{{3}}\pm \frac{\sqrt{3}}{3}\sqrt {11{\nu_{{3}}}^{2}-6\nu_1\sqrt{3\nu_3-3\nu_2}-4\nu_2({\nu_{2}+\nu_3})}.\ees
\begin{figure}
\begin{center}
\subfigure[\label{Trans2s3a}\(\nu_3:=-0.1\)]{\includegraphics[width=.328\columnwidth,height=.25\columnwidth]{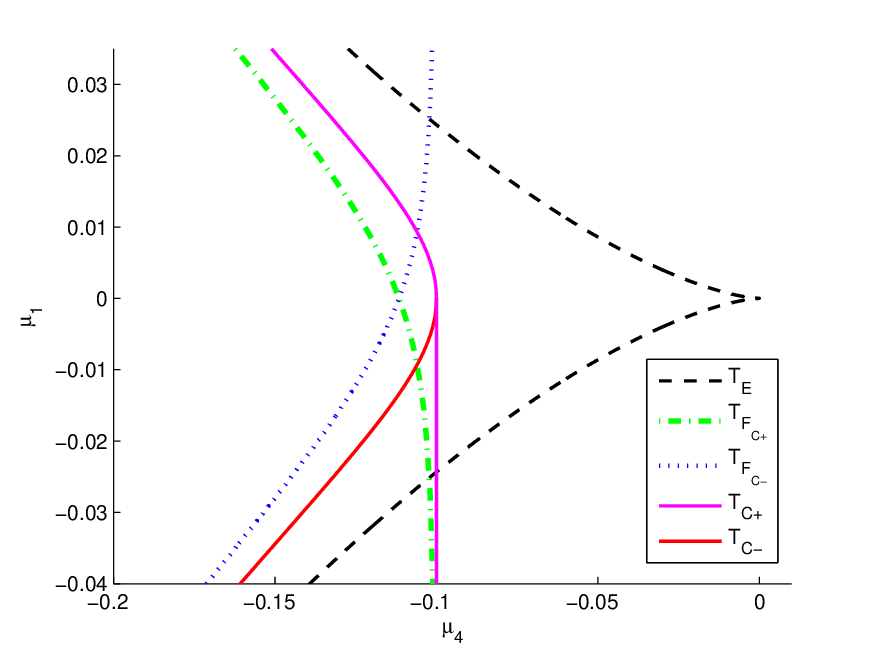}}
\subfigure[\(\nu_3:=0\)]{\includegraphics[width=.328\columnwidth,height=.25\columnwidth]{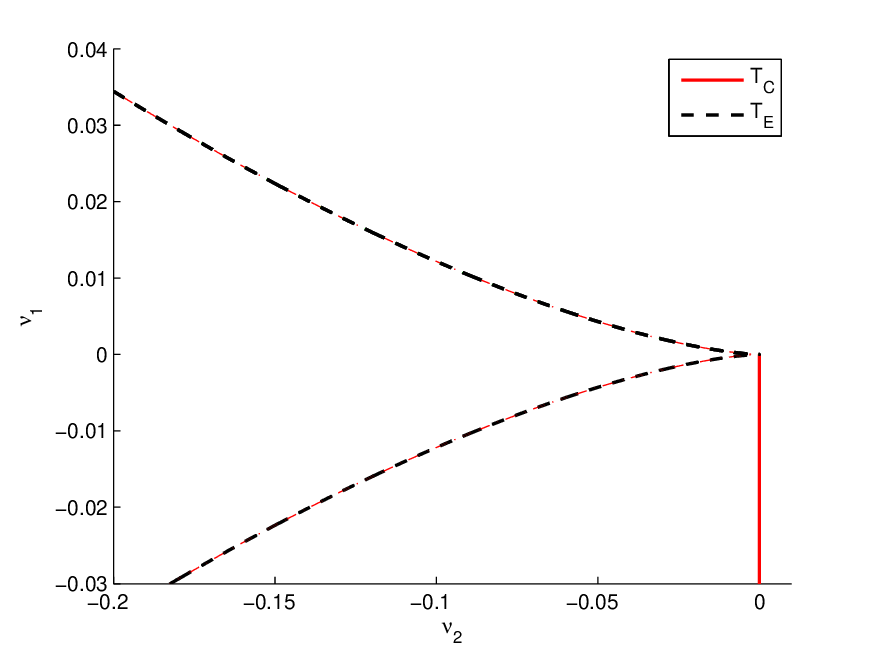}}
\subfigure[\(\nu_3:=0.05\)]{\includegraphics[width=.328\columnwidth,height=.25\columnwidth]{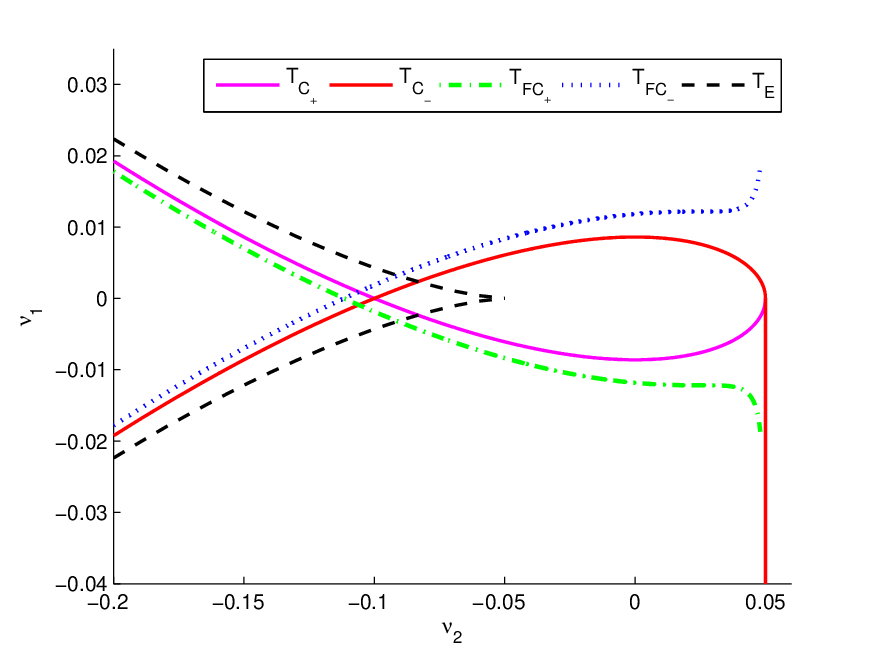}}
\caption{Transition sets \(T_E, T_C,\) and \(T_{FC_{\pm}}\) for \(r:=2, s:=3,\) equation \eqref{Eqr2}. Transition sets of \(T_{FC_{+}}/T_{FC_{-}}\) (and also \(T_{C_+}/T_{C_-}\)) coincide with the upper/lower branch of the cusp \(T_E\) for \(\nu_3:=0.\) There are three equilibria \(E_i\) for \(i:= 1,2,3\) inside the cusp variety \(T_E,\) and one equilibrium \(E_1\) outside the cusp. The points \(C_\pm\) exist/do not exist for parameters below/above the varieties \(T_{C_\pm},\) respectively. For parameters above/below the variety \(T_{FC_+},\) \(C_+\) is a node/focus, while \(C_-\) is always an unstable node. }\label{Trans2s3}
\end{center}
\end{figure}
Thus, two equilibria are born in a saddle-node bifurcation when the parameter crosses
\begin{equation*}
T_{C_\pm}:=\left\{\left(\mp\frac{2\sqrt{3}}{9}(\nu_2+2\nu_3)\sqrt{\nu_3-\nu_2}, \nu_2, \nu_3\right) : \nu_2 \leq \nu_3\right\}\cup \left\{(\nu_1, \nu_2, \nu_2):  \nu_1\leq 0\right\}.
\end{equation*} The equilibria \(C_\pm\) are both unstable when \(\nu_3>0.\) For \(\nu_3<0, \) the equilibrium \(C_+\) is always stable while
\(C_-\) is unstable. Define
\begin{equation*}
T_{F{C_\pm}}:= \left\{(\nu_1,\nu_2,\nu_3): 11{\nu_3}^{2}-4{\nu_{{2}}}^{2}-4\nu_2\nu_3\pm6\nu_1\sqrt{3\nu_3-3\nu_2}=0,\nu_3\neq 0 \right\}.
\end{equation*} The equilibrium \(C_+\) changes from the node type of equilibrium to a focus point when the parameters cross \(T_{FC_+}\); see figure \ref{Trans2s3}, and \cite[figure 6]{LangfCuspHopf} for a similar approach. Since the transition variety \(T_{FC_-}\) is always above \(T_{C_-},\) \(C_-\) is always an unstable node. When the parameter \(\nu_3:=0,\) \(C_+\) has a Hopf singularity while \(C_-\) is a saddle point. Therefore, the system may generically experience a secondary bifurcation at \(C_+.\) This is beyond the scope of this paper and will not be discussed here.

\section{Bifurcation control and universal asymptotic unfolding } \label{SecBC}

Bifurcation control refers to designing a controller for a nonlinear system so that its dynamics gains a desirable behavior; see \cite{ChenBifuControl}. This has many important engineering applications and attracted many researchers. Normal form theory is a powerful tool for local bifurcation control and recently, it has been efficiently used by several authors; see \cite{ChenBifControl2000,ChenBifuControl,Kang04,Kang98,KangIEEE,KangKrener}.
The classical normal form theory is appropriately refined by Kang {\rm el. al.} to include invertible changes of the control state feedbacks; see \cite{Kang04,Kang98,KangIEEE,KangKrener}. Our approach uses hypernormalization (simplification beyond classical normal forms) of the classical normal forms by applying nonlinear time rescaling and also additional nonlinear transformations taken from the symmetry transformation group of the linearized system. This is new in both theory and applications. As far as theory is concerned, this is a contribution to the orbital and parametric normal form classifications of singularities that fits in a long tradition. As a contribution to applications, the extra hypernormalization process enables the use and can propose the type of effective nonlinear state feedback multiple-input controller. The later can be achieved by finding out whether or not a parametric singularity is a universal asymptotic unfolding.

The parametric normal form system \eqref{Univsl} potentially lays the ground for applications in real life problems. Engineering problems are mostly involved with parameters such as control parameters and it is important (when it is feasible) to find explicit direct transformations that transform the asymptotic unfolding parameters to the original parameters of the system. This provides a tool to do the bifurcation analysis of the problem based on the actual controlling parameters. However, this is only feasible when the original system has enough parameters and has them in the right places such that they can actually play the role of the asymptotic unfolding for the system. Therefore, it is important to identify when and which parametric terms of the original system can effectively play the role of unfolding terms and then, remove the redundant parameters. This problem is motivated and greatly influenced by James Murdock and is our most important claimed contribution in this paper. The remainder of this section is devoted to introduce an algorithm for performing this task. This approach potentially proposes certain effective controlling parameters within a parametric system for a possible engineering design. We have implemented our suggested approach in {\sc Maple} to illustrate that it is computable and successfully works.

\begin{rem}
The bifurcation analysis of the universal asymptotic unfolding normal form system provides all possible real world asymptotic (finitely determined) dynamics of an engineering problem. However, the necessity for adding extra unfolding parameters concludes that the original parametric system may not exhibit all such possible dynamics. Hence in these circumstances, the desired dynamics may not always be produced by the existing modeling parameters and modeling refinement is required. Therefore, one needs to find other (already ignored) small parameters in the physics of the problem (our approach provides effective suggestions) to incorporate them in the model for a more comprehensive engineering design.
\end{rem}
Denote the \(s+1\)-jet of the vector field \eqref{Univsl} by
\be\label{FinPNFRed}
\tilde{v}(x, \rho, \nu_1, \cdots, \nu_N):=\begin{pmatrix}
2\rho^2+a_rx^{r+1}+\beta_sx^{s+1}+\sum_{1\leq i\leq r}\nu_{i}x^{i-1}+\sum_{i=r+1}^{N}\nu_{i}x^{k_{i}+1}\\
-\dfrac{a_{r}(r+1)}{2}x^r\rho+\frac{1}{2}\beta_sx^{s}\rho-\sum_{1\leq i\leq r}\frac{i-1}{2}\nu_{i}x^{i-2}\rho+\frac{1}{2}
\sum_{i=r+1}^{N}\nu_ix^{k_i}\rho\\
\end{pmatrix},
\ee and the polynomial map \(\nu(\mu)\) and matrix \(J\) by
\be\label{nuJacb}
\nu(\mu):= (\nu_1(\mu), \ldots, \nu_N(\mu)), \qquad J:= \frac{\partial (\nu_1, \ldots, \nu_N)}{\partial (\mu_1,\ldots, \mu_p)}\bigg|_{\mu=\0}.
\ee Assume that
\bes\rank (J)= k \quad \hbox{ for }\quad k\leq \min \{ p, N \}.\ees
Then, there always exists a linear space \(M\) such that
\ba\nonumber
\mathbb{R}^p= \ker J\oplus M.
\ea Similar to the formal basis style \cite[Page 1006]{GazorYuSpec} in finding complement spaces, we may choose the complement space \(M\) such that
\(M:= \Span\{e_{\sigma(i)}\,|\, i= 1, \ldots, k\}\) for a permutation \(\sigma \in S_{p}\). Here, \(e_j\) denotes the standard basis of \(\mathbb{R}^p.\) Let \(\hat{e}_i:=Je_{\sigma(i)},\) for \(i=1, \ldots, k.\) Thus,
\bes
{\rm range} (J)= \Span \{\hat{e}_i\,|\, i= 1, \ldots, k\}.
\ees Define \(\hat{\mu}:=(\mu_{\sigma(k+1)}, \ldots, \mu_{\sigma(p)})\) and the polynomial map \(\psi_{\hat{\mu}}: M\rightarrow \mathbb{R}^k\) by
\ba
&\psi_{\hat{\mu}}(\mu_{\sigma(1)}, \mu_{\sigma(2)}, \ldots, \mu_{\sigma(k)})= (\nu\cdot \hat{e}_1, \nu\cdot \hat{e}_2, \ldots, \nu\cdot \hat{e}_k), &
\ea where \(\nu\) is the polynomial map given in equation \eqref{nuJacb}. Since the Jacobian of \(\psi_{\0}\) evaluated at the origin has the full rank and
assuming that \(\hat{\mu}\) is sufficiently small, the map \(\psi_{\hat{\mu}}\) is locally invertible. Then,
\be\label{embedtrans}
\left(\mu_{\sigma(1)}(\nu, \hat{\mu}), \mu_{\sigma(2)}(\nu, \hat{\mu}), \ldots, \mu_{\sigma(k)}(\nu, \hat{\mu})\right)={\psi_{\hat{\mu}}}^{-1}\big(y_1(\nu), \ldots, y_k(\nu)\big),
\ee where \(y_i(\nu)=\nu\cdot \hat{e}_i\) for \(i=1, \ldots, k\). Combining the map given by \eqref{embedtrans} and \(\nu\) given by \eqref{nuJacb}, the following proposition holds.
\begin{prop}\label{ParNF} Assume that \(\rank (J)= k\) and consider \(\tilde{v}\) in equation \eqref{FinPNFRed}, the permutation \(\sigma\in S_p\) described above, and invertible reparametrizations \(\mu_{\sigma(i)}(\nu, \hat{\mu})\) (for \(i=1, \ldots, k\)) given by \eqref{embedtrans}.
Then, there exist polynomial functions \(\nu_{\sigma(i)}(\mu)\) (for \(i=k+1, k+2, \ldots, N\)) so that equation \eqref{PEq1} is equivalent to
\be\label{RepPNF}
[\dot{x}, \dot{\rho}]=\tilde{v}\left(x, \rho, \mu_{\sigma(1)}, \mu_{\sigma(2)}, \ldots, \mu_{\sigma(k)}, \nu_{\sigma(k+1)}(\mu), \ldots, \nu_{\sigma(N)}(\mu)\right),
\ee that is, the \(s\)-universal asymptotic unfolding planar normal form.
\end{prop}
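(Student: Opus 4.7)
The plan is to combine Theorem \ref{MainThm} with the formal inverse function theorem applied to $\psi_{\hat\mu},$ and then perform a polynomial reparametrization of $\mu$ together with an invertible linear relabeling of the unfolding slots.

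First, Theorem \ref{MainThm} reduces \eqref{PEq1} to the orbitally equivalent system $\tilde v(x, \rho, \nu_1(\mu), \ldots, \nu_N(\mu))$ with polynomial maps $\nu_i$ satisfying $\nu_i(\0) = 0.$ Hence the remaining task is to re-express this family so that the $\sigma(i)$-th unfolding slot is literally $\mu_{\sigma(i)}$ for $i \le k$ and a polynomial function $\nu_{\sigma(i)}(\mu)$ for $i > k.$

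Second, I verify the invertibility of $\psi_{\hat\mu}.$ A direct application of the chain rule gives
\bes
\partial_{\mu_{\sigma(j)}}(\nu \cdot \hat e_i)\big|_{\mu = \0} = (J e_{\sigma(j)}) \cdot \hat e_i = \hat e_j \cdot \hat e_i,
\ees
so the Jacobian of $\psi_{\hat\mu}$ at the origin equals the Gram matrix $G := (\hat e_i \cdot \hat e_j)_{i,j=1}^{k}.$ Since $\{\hat e_i\}_{i=1}^{k}$ span $\mathrm{range}(J)$ by the construction of $\sigma,$ they are linearly independent, so $G$ is symmetric positive definite and invertible. The formal (polynomial) inverse function theorem then yields the polynomial inverse $\psi_{\hat\mu}^{-1}$ and the relations \eqref{embedtrans}.

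Third, using $\psi_{\hat\mu}^{-1}$ I introduce the reparametrization $\mu \mapsto \tilde\mu$ defined by $\tilde\mu_{\sigma(i)} := \nu(\mu) \cdot \hat e_i$ for $i \le k$ and $\tilde\mu_{\sigma(i)} := \mu_{\sigma(i)}$ for $i > k.$ In $\sigma$-reordered coordinates, its Jacobian at the origin is block upper triangular with $G$ on the top-left and the identity on the bottom-right, so the map is locally invertible. Completing $\{\hat e_i\}_{i=1}^{k}$ to a basis $\{\hat e_i\}_{i=1}^{N}$ of $\mathbb{R}^N$ and exploiting the linearity of $\tilde v$ in each of its $N$ unfolding arguments, I re-express the unfolding through this new basis: the $\sigma(i)$-th slot for $i \le k$ receives the value $\tilde\mu_{\sigma(i)} = \mu_{\sigma(i)},$ while for $i > k$ it receives the polynomial $\nu(\mu) \cdot \hat e_i,$ defining $\nu_{\sigma(i)}(\mu).$ After renaming $\tilde\mu$ back to $\mu,$ substitution into $\tilde v$ yields \eqref{RepPNF}. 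The main technical subtlety lies in this last step, namely justifying the linear change from the standard basis to $\{\hat e_i\}_{i=1}^{N}$ on the unfolding slots; because $\tilde v$ depends linearly on each unfolding argument, such a linear change produces another valid universal asymptotic unfolding normal form, and the uniqueness clause of Theorem \ref{MainThm} ensures that the $s+1$-jet is uniquely determined modulo this linear change, so no qualitative information is lost.
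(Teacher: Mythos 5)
Your overall route is the same as the paper's: invoke Theorem \ref{MainThm} to reduce \eqref{PEq1} to \(\tilde v(x,\rho,\nu_1(\mu),\ldots,\nu_N(\mu))\), establish that \(\psi_{\hat\mu}\) is locally invertible, and use the reparametrization \eqref{embedtrans} to trade \(k\) of the unfolding coefficients for the distinguished parameters. The paper's own proof is exactly this, compressed into one sentence with a pointer to \cite{GazorYuFormal}; your explicit computation that the Jacobian of \(\psi_{\0}\) at the origin is the Gram matrix \((\hat e_i\cdot\hat e_j)_{i,j=1}^k\) of the linearly independent vectors \(\hat e_i=Je_{\sigma(i)}\) is correct and supplies a detail the paper only asserts.

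The final step of your argument, however, does not deliver \eqref{RepPNF} as written, for two reasons. First, the coefficients of \(\nu\) in a completed basis \(\{\hat e_1,\ldots,\hat e_N\}\) of \(\mathbb{R}^N\) are its pairings with the \emph{dual} basis, not the dot products \(\nu\cdot\hat e_i\); these coincide only when \(\{\hat e_i\}\) is orthonormal, which nothing in the construction guarantees. Second, and more seriously, ``re-expressing the unfolding through the new basis'' changes the vector fields attached to the slots: writing \(\tilde v=v_0+\sum_i\nu_i w_i\) and passing to coordinates \(c_i\) in the basis \(\{\hat e_i\}\) produces \(v_0+\sum_i c_i\bigl(\sum_j(\hat e_i)_j w_j\bigr)\), a different affine family that is no longer \(\tilde v\) evaluated at \((c_1,\ldots,c_N)\); the specific monomial structure of \eqref{FinPNFRed} is destroyed, and neither ``linearity in the unfolding arguments'' nor the uniqueness clause of Theorem \ref{MainThm} restores it. What the conclusion actually requires is that the \(k\) slots to be occupied by \(\mu_{\sigma(1)},\ldots,\mu_{\sigma(k)}\) correspond to \(k\) rows of \(J\) that are linearly independent, so that \(\mu\mapsto(\nu_{j_1}(\mu),\ldots,\nu_{j_k}(\mu))\) is itself a submersion and can be inverted directly via \eqref{embedtrans}, the remaining slots then carrying the induced polynomials \(\nu_j(\mu)\). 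This is how the worked example of Section \ref{secExm} (where \(k=N\)) proceeds, and it is the reading under which the paper's ``straightforward computation'' goes through; a basis change on the target space \(\mathbb{R}^N\) is not a substitute for that choice of slots.
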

\bpr Given Theorem \ref{MainThm} and thanks to equation \eqref{embedtrans}, a straightforward computation shows that \eqref{PEq1} takes the form of \eqref{RepPNF}; also see the proof of \cite[Lemma 3.3]{GazorYuFormal}. \epr
We refer to the parameters \(\mu_{\sigma(1)}, \mu_{\sigma(2)}, \ldots, \mu_{\sigma(k)}\) as {\it distinguished parameters}, \ie they play the role of universal asymptotic unfolding parameters. Adding extra asymptotic unfolding parameters to the system is only justified when the original control parameters of the system can not fully do the unfolding job. In this case, the control parameters may still have influences upon the added unfolding parameters and any such relation is useful for their applications in bifurcation control and needs to be computed. This is computed through Proposition \ref{ParNF}.
The map given by \eqref{embedtrans} projects the transition sets associated with the planar differential system \eqref{FinPNFRed} from the \(\nu\)-variables into the original variables \(\mu_{\sigma(i)}\) for \(i=1, \ldots, k\).

\section{ Illustrating examples with two imaginary uncontrollable modes }\label{secExm}

This paper is concerned with designs of nonlinear controllers for linearly uncontrollable systems; \ie linear uncontrollability means that linear controllers are not sufficient to force an initial state to another state in a finite time and thus, nonlinear controllers are instead designed to control its dynamics. We emphasize that we do not treat control systems but we indeed provide cognitive suggestions for designing controllers for linearly uncontrollable differential systems. Kang and Krener \cite{KangKrener} described an extended Brunovsky canonical form for {\it linearly controllable} {\it nonlinear control systems};
see \cite[Theorems 1 and 3]{KangKrener} and \cite[Equation 2.11a]{KangKrener}. \cite[Equation 2.11a]{KangKrener} accommodates a single zero singularity among its simplest examples while at least two imaginary uncontrollable modes are needed for a linearly uncontrollable system. This justifies Hopf-zero singularity  amongst the simplest examples of a nonlinear system that is not linearly controllable. Hopf bifurcation control is readily available given our results in \cite{GazorYuFormal,GazorYuSpec}. The Bogdanov-Takens bifurcation controller design is an in progress project.

We have developed a {\sc Maple} program to compute the parametric normal forms for any small perturbation of a family of Hopf-zero singularity (the case I).
Further, it may take constant coefficients (different from perturbation parameters) of the parametric systems as unknown symbols rather than merely taking numerical coefficients. This greatly promotes its potential for practical applications. We here appreciate Sajjad Bakrani-Balani for writing a procedure that enhanced the efficiency of this capability. Our program will be updated as our research progresses aiming at integrating and enhancing our results \cite{GazorYuSpec,GazorYuFormal,GazorMokhtariInt,GazorMokhtariEul,GazorMokhtari,GazorMoazeni} into a user-friendly {\sc Maple} library for normal form analysis of singularities.

Any control system \cite[Equations (2.1--2.2)]{Kang04} on a three dimensional central manifold with two imaginary uncontrollable modes can be transformed into
\be\label{EqCont}
\dot{x}:= u+f(x, z_1, z_2, u), \quad \dot{z_1}:=z_2+ g(x, z_1, z_2, u), \quad \dot{z_2}:=-z_1+ h(x, z_1, z_2, u),
\ee using linear changes in state and feedback variables; see \cite[Equation (2.3)]{Kang04}. Here \(u\) stands for a quadratic multiple-feedback controller and assume that it is given by
\be\label{QuadControl}
u:= \mu_1+\mu_2z_2+\mu_3z_1+\mu_4x+\mu_5{z_2}^2+\mu_6{z_1z_2}+\mu_7x{z_2}+\mu_8{xz_1},
\ee where \(\mu_i\)-s stand for the control parameters.
\subsection{The case \(r:=1.\) }

We assume that
\be\label{EqCoef}
f:=-d_1x^2+d_2 {z_1}^2+d_3x^3, \quad g:=d_1x z_1, \quad h:= d_1x z_2, \quad d_1d_2d_3\neq 0.
\ee
\begin{figure}
\begin{center}
\subfigure{\includegraphics[width=.5\columnwidth,height=.25\columnwidth]{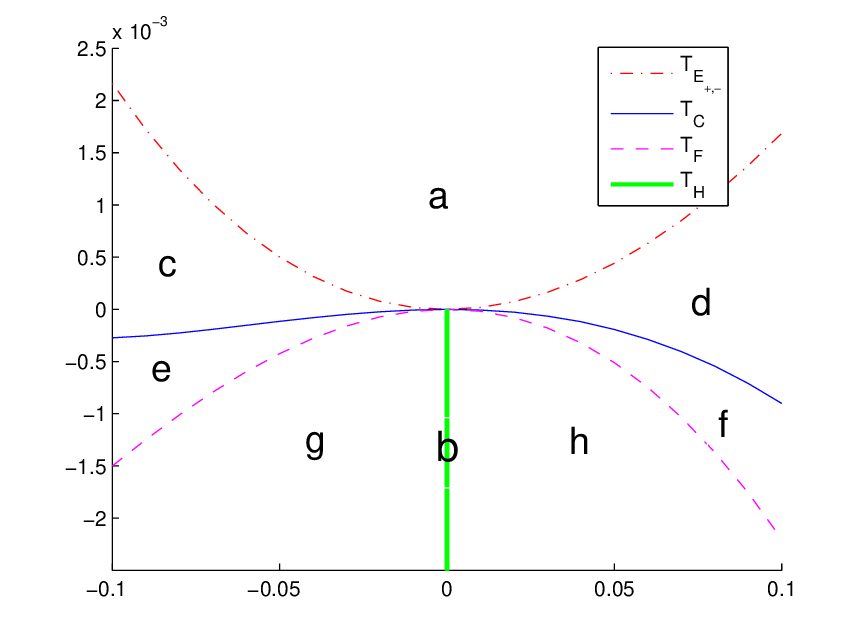}}
\caption{The two-jet estimated varieties in terms of original distinguished parameters \((\mu_1, \mu_4)\) in equation \eqref{EqCont}.
The vertical and horizontal axes are \(\mu_1\) and \(\mu_4,\) respectively. }\label{2}
\end{center}
\end{figure}
Our approach can be applied to similar examples. A universal asymptotic unfolding normal form \eqref{EqBF} is described by
\bes
a_1=- \dfrac{4d_1}{d_2}, \qquad \beta_2= \dfrac{3d_3}{d_2},
\ees and
\bas
\nu_1(\mu)&=&{\frac{4}{d_2}}\mu_1-\frac {8}{{d_2}^2}\mu_{1}\mu_{5}
-\frac{13d_{1}}{{2d_{2}}} {\mu_{1}}^2+
\frac{1}{2d_{1}d_{2}} {{\mu_{4}}^{2}}+
\frac{1}{2}{\frac {{d_{1}}^{2}}{d_{2}d_{3}}}\mu_{1}\mu_{{4}}+
{\frac {1679
}{3456}}{\frac {{d_{3}}^{2}{\mu_{1}}^{2}}{d_{2}{d_{1}}^{3}}}
+{\frac {365}{64}}{\frac {d_
{3}\mu_{1}\mu_{{4}}}{{d_{1}}^{2}d_{2}}},
\\
\nu_2(\mu)&=&{\frac {2}{d_{2}}}\mu_{4}
-\frac{2}{{d_{2}}^{2}}\mu_{4}\mu_{5}
+\frac{{d_{1}}^{2}}{8d_{2}d_{3}} {\mu_{4}}^{2}
-{\frac {2d_{1}}{d_{2}}}\mu_{1}\mu_{4}+
{\frac {1271}{768}}\frac {d_{3}{\mu_{4}}^{2}}{{d_{1}}^{2}d_{{
2}}}+{\frac {15419}{
13824}}{\frac {{d_{3}}^{2}\mu_{1}\mu_{4}}{d_{2}{d_{1}}^{3}
}}.
\eas
\begin{figure}
\begin{center}
\subfigure[Region a: Asymptotic to the \(x\)-axis, the solution approaches infinity for \(\mu_1=0.004, \mu_4=0.001\).]
{\includegraphics[width=.49\columnwidth,height=.18\columnwidth]{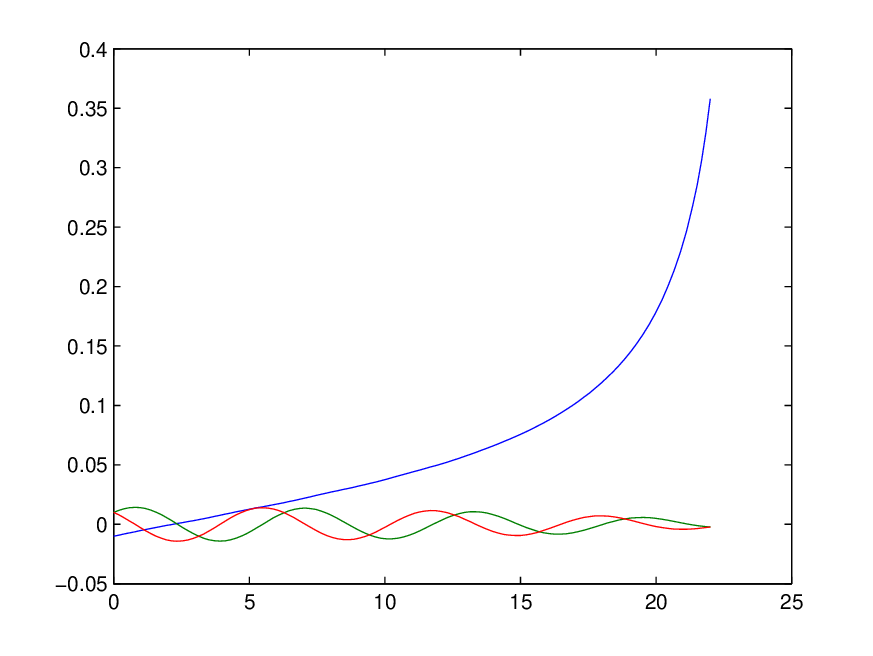}}
\subfigure[Parameters \(\mu_1=-0.004\) and \(\mu_4=0.001\) are taken from region b. Note that here \(d_3:=\frac{3}{4}\)).]
{\includegraphics[width=.49\columnwidth,height=.18\columnwidth]{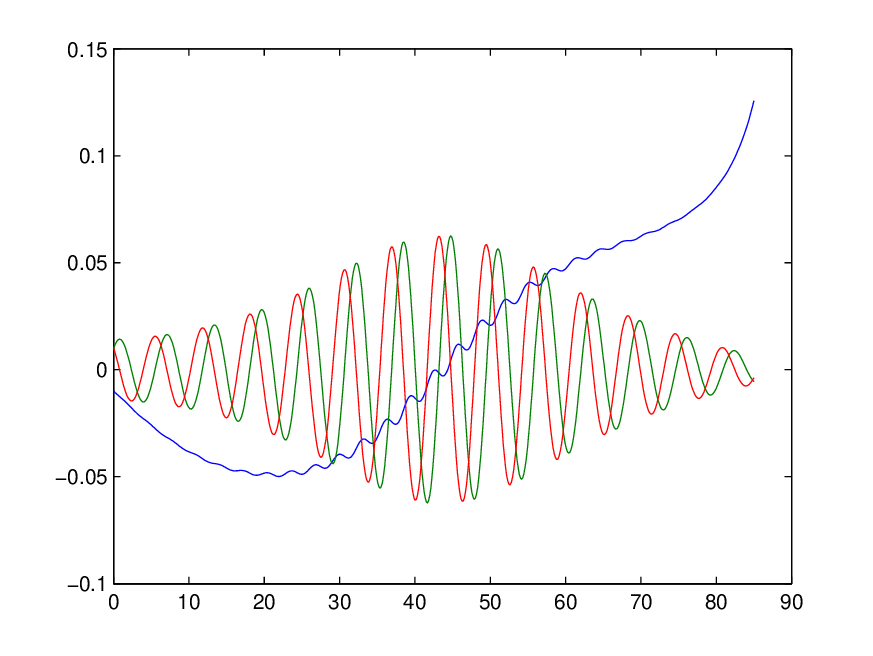}}
\subfigure[Region c: The orbit converges to a stable equilibrium for \(\mu_1=0.0005\), \(\mu_4=-0.1\), \ie \(E_-\) is a sink.]
{\includegraphics[width=.49\columnwidth,height=.18\columnwidth]{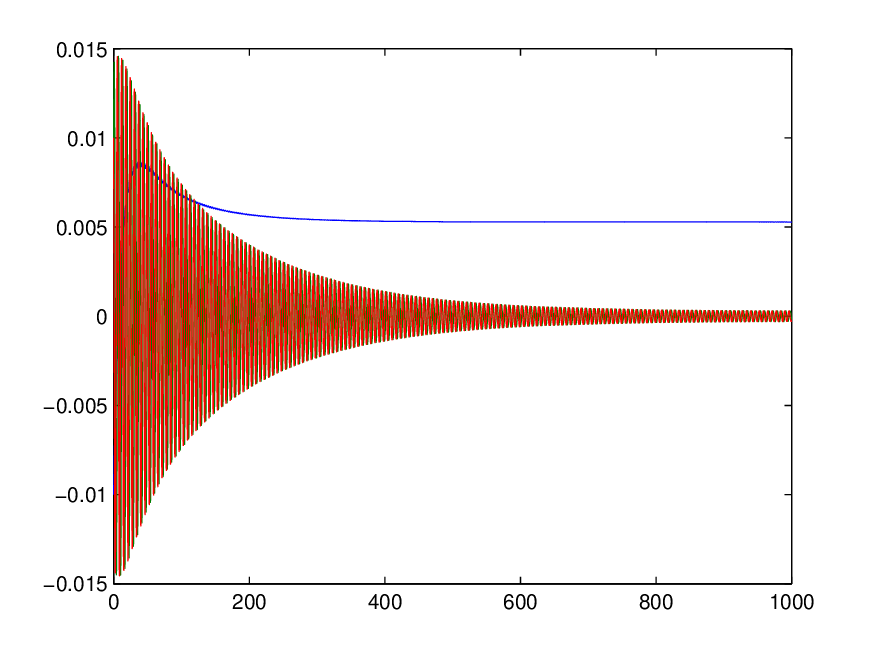}}
\subfigure[Region d: For \(\mu_1=0.0005\), \(\mu_4=0.08,\) the equilibrium \(E_-\) is a saddle while \(E_+\) is a source point. ]
{\includegraphics[width=.49\columnwidth,height=.18\columnwidth]{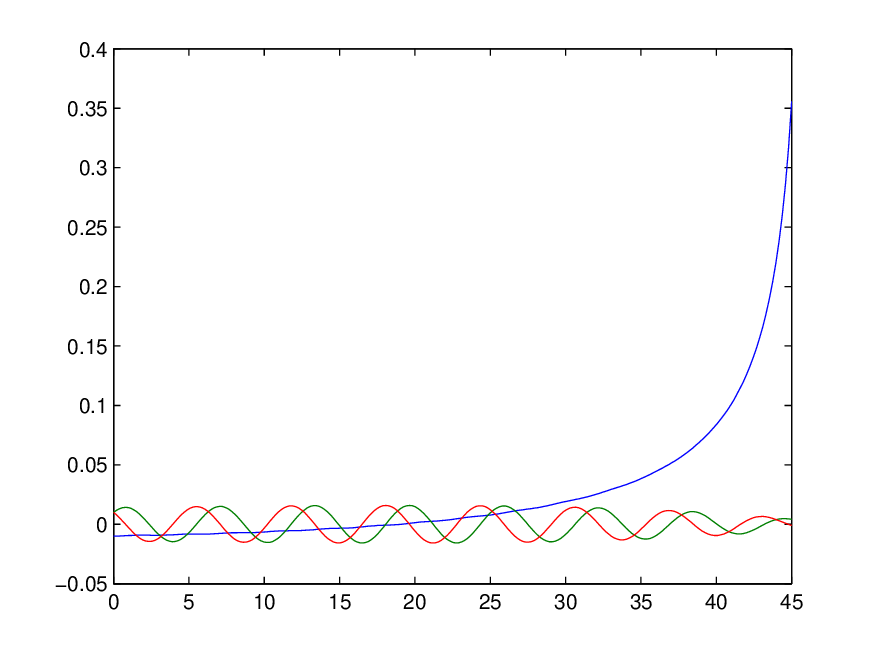}}
\subfigure[The orbit approaches a stable limit cycle (corresponding to \(C\)) in 3-dimension for \(\mu_1=-0.001\), \(\mu_4=-0.08\) from region e.]{\includegraphics[width=.49\columnwidth,height=.18\columnwidth]{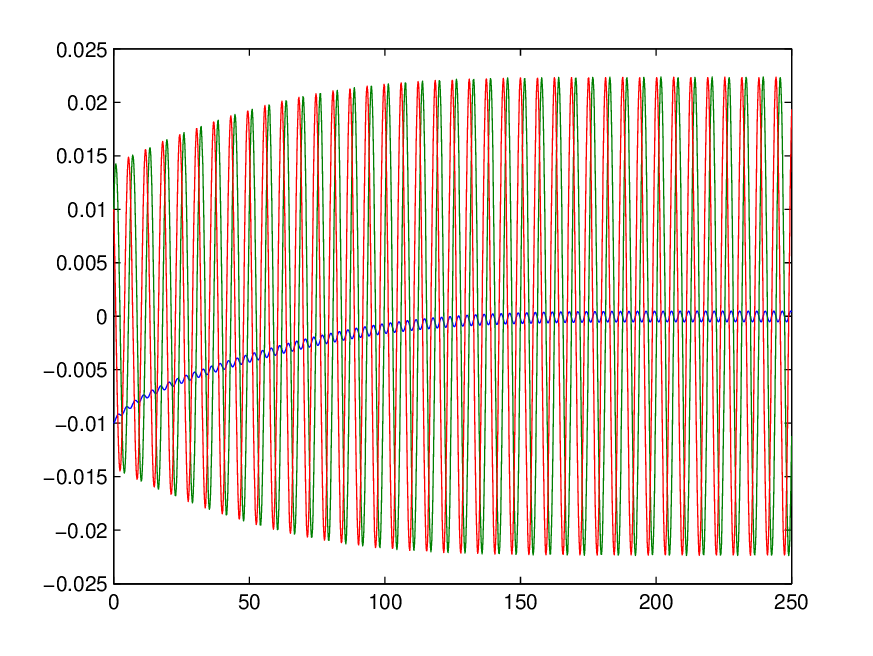}}
\subfigure[The orbit diverges to infinity while rotates around the \(x\)-axis. Here, \(C\) is a source and \(\mu_1=-0.001,\) \(\mu_4=0.08\) from region f]
{\includegraphics[width=.49\columnwidth,height=.18\columnwidth]{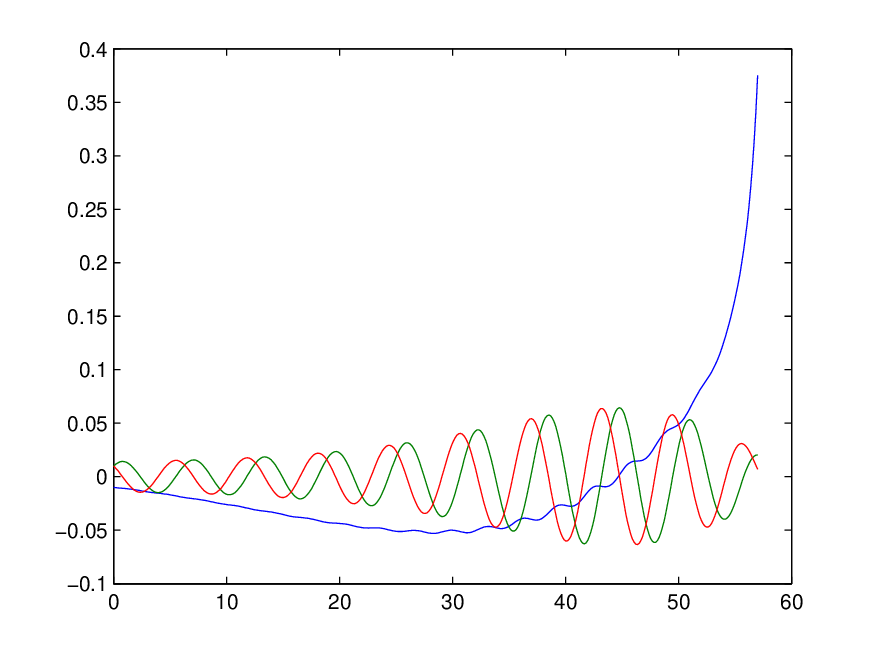}}
\subfigure[Region g: The solution rotates and converges to a stable limit cycle \(C\) for \(\mu_1=-0.0015,\) \(\mu_4=-0.06.\)]
{\includegraphics[width=.49\columnwidth,height=.18\columnwidth]{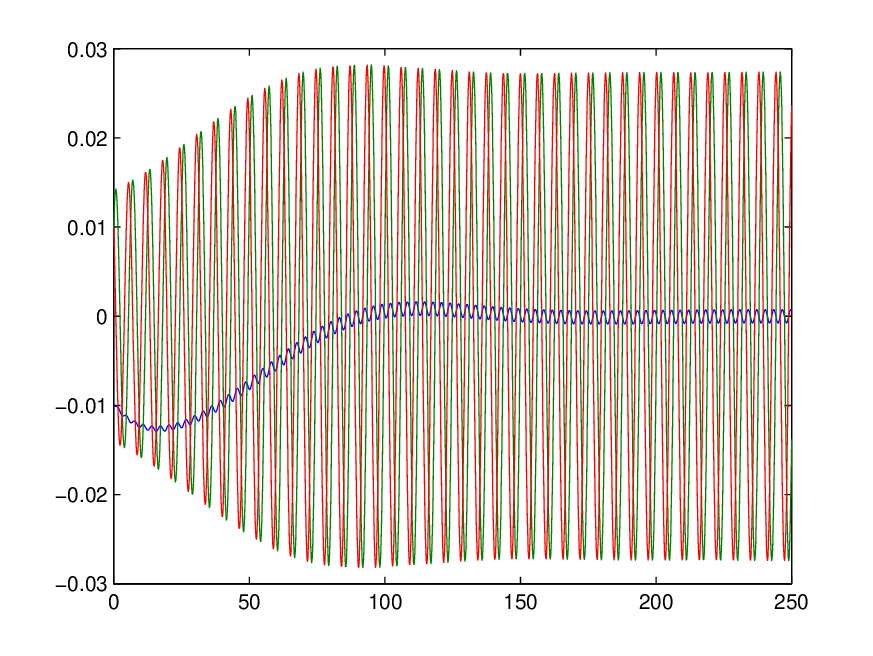}}
\subfigure[Region h: An attracting invariant torus for \(d_3:= -\frac{3}{4}\) and \(\mu_1=-0.0015,\) \(\mu_4=0.001.\) ]
{\includegraphics[width=.49\columnwidth,height=.18\columnwidth]{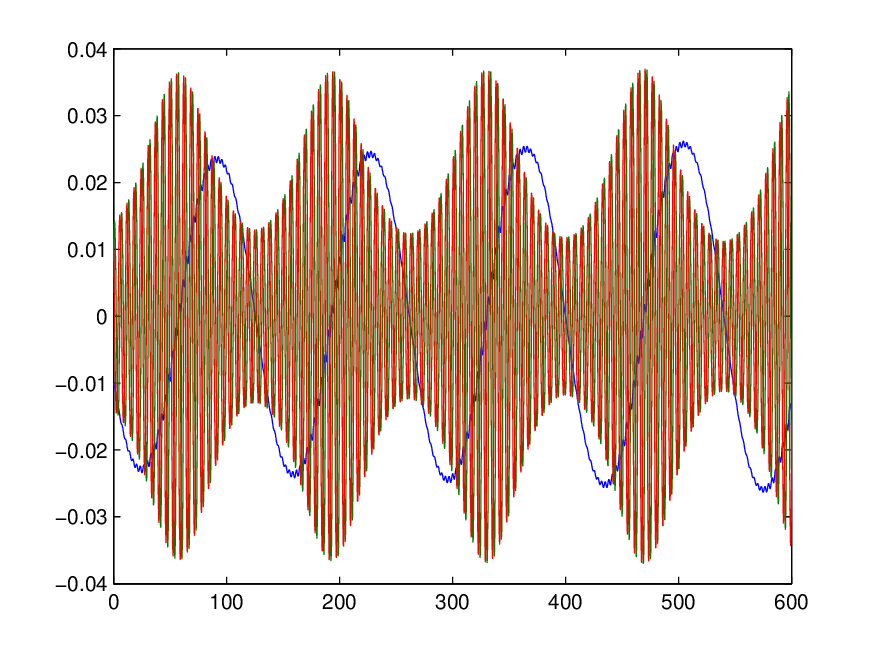}}
\caption{Time series \((x(t), y(t), z(t))\) for different choices of parameters from regions (a--h) in figure \ref{2}. }\label{3}
\end{center}
\end{figure}

Using our {\sc Maple} program, the distinguished parameters are \(\mu_1\) and \(\mu_{4}\). Therefore, we choose \(\mu_j=0\) for any \(j\neq 1, 4.\)
Then, the two-jet approximations for the inverse maps \({\nu_1}^{-1}\) and \({\nu_2}^{-1}\) are governed by
\bas
\mu_1(\nu_1, \nu_2)&:=&\frac{1}{4}d_{2}\nu_{1}+{\frac {13d_{1}{d_{2}}^{2}}{128}}{\nu_{1}}^{2}-\frac{{d_{2}}^{2}}{32d_{1}} {{\nu_{2}}^{2}}
-{\frac {365}{2048}}{\frac {{d_{2}}^{2}d_{{
3}}\nu_{1}\nu_{2}}{{d_{1}}^{2}}}-{\frac {1}{64}}{\frac {{d_{2}}^{2}{d_{1}}^{2}\nu_{1}
\nu_{2}}{d_{3}}}\\
&&-{\frac {1679}{221184}}{\frac {{d_{2}}^{2}{{d_{3}}^{2}\nu_{
1}}^{2}}{{d_{1}}^{3}}},
\\
\mu_{4}(\nu_1, \nu_2)&:=&\frac{d_{2}}{2}\nu_{2}+\frac{d_{1}{d_{2}}^{2}}{8}\nu_{1}\nu_{2}-{\frac {1271}{6144}}{\frac {{d_{2}}^{2}d_{3}{\nu_2}^{2}}{{d_1}^{2}}}-{\frac {{d_{1}}^
{2}{d_{2}}^{2}}{64d_{3}}}{\nu_{2}}^{2}-{\frac {15419}{221184}}{\frac {{d_{2}}^{2}{d_{3}}^{2}\nu_{1}\nu_{2}}{{d_{1}}^{3}}}.
\eas
For numerical simulation, we choose
\bes d_1:= -1, \quad d_2:= 4, \hbox{ and } d_3:= \frac{4}{3}.\ees
Then, \(a_1:=1\) and \(\beta_2:=1.\)
Thus, a three-jet estimation in terms of \(\mu_{1}\) and \(\mu_{4}\) for the bifurcation varieties \(T_{E_\pm}\) and \(T_{C}\) are given by
\bas T_{E_\pm}&:=&\left\{\left(\mu_1, \frac{3}{16}{\mu_{{4}}}^{2}-{\frac {1061}{4608}}{\mu_{{4}}}^{3}+{\frac {
3486907}{7077888}}{\mu_{{4}}}^{4}-{\frac {31594532951}{36691771392}}
{\mu_{{4}}}^{5}+{\frac {29226779813459}{21134460321792}}{\mu_{{4}}}^{6}\right)\right\},
\\
T_C&:=&\left\{\left(\mu_1, -\frac{1}{16}{\mu_{{4}}}^{2}-{\frac {157}{512}}{\mu_{{4}}}^{3}+{\frac {
22826407}{63700992}}{\mu_{{4}}}^{4}-{\frac {347712683}{452984832}}
{\mu_{{4}}}^{5}+{\frac {92683866456661}{63403380965376}}{\mu_{{4}}}^{6}\right)\right\},
\eas
\bes
 T_{F}:=\left\{\left(\mu_1, -\frac{3}{16}{\mu_{{4}}}^{2}-{\frac {1589}{4608}}{\mu_{{4}}}^{3}+{\frac{4797371}{21233664}}{\mu_{{4}}}^{4}
 -{\frac{23112314429}{36691771392}}{\mu_{{4}}}^{5}+{\frac {8519077413067}{7044820107264}}{\mu_{{4}}}^{6}\right)\right\},
\ees and \(T_H:=\{(\mu_1, 0)\}\); see figure \ref{2}. To validate our approximated transition sets, we choose the following eight values from regions (a-h) respectively:
\bas
(\mu_1, \mu_4):&=& (0.004, 0.001), (-0.004, 0.001), (5 \times 10^{-4}, -0.1), (5 \times 10^{-4}, 0.08), \\
 && (-0.001, -0.08), (-0.001, 0.08), (-1.5\times 10^{-3}, -0.06), (-1.5\times 10^{-3}, 0.001).
\eas Numerical simulations using MATLAB and the initial point \((x, y, z)=(-0.01, 0.01, 0.01)\), we accordingly obtain figure \ref{3}(a)--(g). To observe an attracting invariant torus, we instead choose \(d_3:=-\frac{4}{3}\); see figure \ref{3}(h). These are compatible with our anticipated bifurcations and demonstrate that cognitive choices for different values of \((\mu_1, \mu_4)\) from connected components of figure \ref{2}
effectively control the local dynamics of the system \eqref{EqCont}.

\subsection{The case \(r:=2\) and \(s:=3.\)}
\begin{figure}
\begin{center}
\subfigure{\includegraphics[width=.45\columnwidth,height=.27\columnwidth]{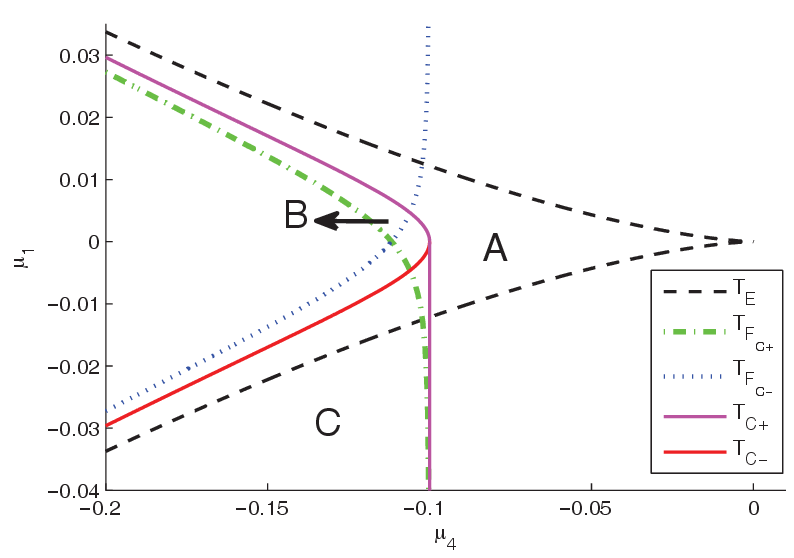}}
\caption{Transition varieties for the system (\ref{EqCont}, \ref{EqCoef}, \ref{EqCoef0}), \((r, s):=(2, 3),\) and \(\mu_3+\mu_4:=-0.1\). \(B\) shows the region above \(T_{FC_+}\) and below \(T_{C_+}.\) }\label{invTrans}
\end{center}
\end{figure}

Consider the control system given by equations \eqref{EqCont}, \eqref{EqCoef} and
\be\label{EqCoef0}
f:= d_2 {z_1}^2+d_3x^3+d_4x^4, \quad g:= u-\frac{3}{2}d_3x^2z_1, \quad h:= -\frac{3}{2}d_3x^2z_2, \qquad d_2d_3d_4\neq 0.
\ee Using our Maple program the distinguished parameters are chosen as \(\mu_1, \mu_3,\) and \(\mu_4,\) and in the parametric normal form \eqref{Eqr2}, \(a_2:= \frac{4d_3}{d_2},\) \(\beta_3:= \frac{16d_4}{5d_2},\)
\bes
(\nu_1,\nu_2,\nu_3):=\left(\dfrac{2 }{d_2} (2\mu_1+\mu_1\mu_3 \mu_4),\frac{2}{d_2}\Le(\mu_4-\mu_3+2 \mu_3 {\mu_4}^2\Ri),\frac{2}{d_2}(\mu_3+\mu_4)\right),
\ees and
\bes
(\mu_1,\mu_2,\mu_3):=\left(\frac{d_2}{4} \nu_1-\frac{{d_2}^3}{128}\nu_1{\nu_4}^2+\frac{{d_2}^3}{128}\nu_1 \nu_3^2, \frac{d_2}{4} \nu_4-\frac{d_2}{4} \nu_3,  \frac{d_2}{4} \nu_4+\frac{d_2}{4} \nu_3\right).
\ees Hence, estimated transition sets are given by
\bes
T_E:=\Le\{(\mu_1, \mu_3, \mu_4): \frac{432{\mu_1}^2(1+ \mu_3 \mu_4+0.25{\mu_3}^2{\mu_4}^2)}{{d_2}^2}+\frac{256{\mu_4}^3(1+3\mu_3{\mu_4}+3{\mu_3}^2{\mu_4}^2+{\mu_3}^3 {\mu_4}^3)}{{d_2}^3}\Ri\},
\ees
\bes
T_{C_\pm}:=\Le\{\Le(\frac{\mp4\sqrt{3\mu_3(1- {\mu_4}^{2})}}{9\sqrt{d_2}}\frac{\mu_3+3\mu_4+2\mu_3{ \mu_4}^{2}}{2+\mu_3 \mu_4}, \mu_3, \mu_4\Ri): d_2\mu_3\geq 0\right\}\cup\{(\mu_1, 0, \mu_4): d_2\mu_1\leq 0\},
\ees

\bes
T_{FC_\pm}:=\Le\{(\mu_1, \mu_3, \mu_4): \sqrt {{\mu_3}}\mu_1=\mp\frac {11{\mu_{3}}^{2}+30\mu_{3}\mu_{4}+3{\mu_{4}
}^{2}-24\mu_3{\mu_4}^{3}+8{\mu_3}^{2}{\mu_4}^{2}-
16{\mu_3}^{2}{\mu_4}^{4}}{6\,{\rm sign}(d_2)\sqrt {3d_2{(1- {\mu_4}^{2})}}(2+\mu_3\mu_4) }
\right\},
\ees
and
\bes
T_H:=\{(\mu_1, \mu_3, -\mu_3)\}.
\ees For numerical simulations we take \(d_2:=4, d_3:= 1, d_4:= \frac{5}{4},\) \(\mu_3+\mu_4:=-0.1,\) and the following values for parameters \((\mu_1, \mu_4)\):
\bes
(-0.0047, -0.112), (-0.005, -0.08),(-0.035, -0.15), (-0.035, -0.15),
\ees
and respectively obtain figures \ref{invTrans}(a)--(d). Figures \ref{C}--\ref{D} suggest the coexistence of two distinct attractors, that is a bistability involving equilibria and limit cycles in region \(C\) of figure \ref{invTrans}.
\begin{figure}
\begin{center}
\subfigure[Region \(A\): An orbit approaching a stable equilibrium \(E_1\).]
{\includegraphics[width=.40\columnwidth,height=.23\columnwidth]{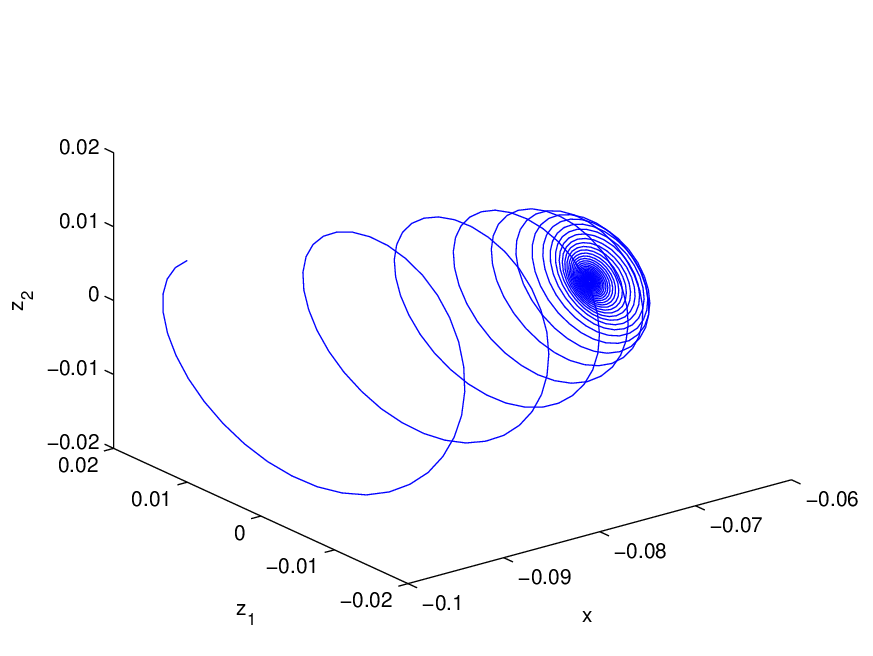}}
\subfigure[Region \(B\): An orbit approaching a stable limit cycle \(C_+\).]
{\includegraphics[width=.40\columnwidth,height=.23\columnwidth]{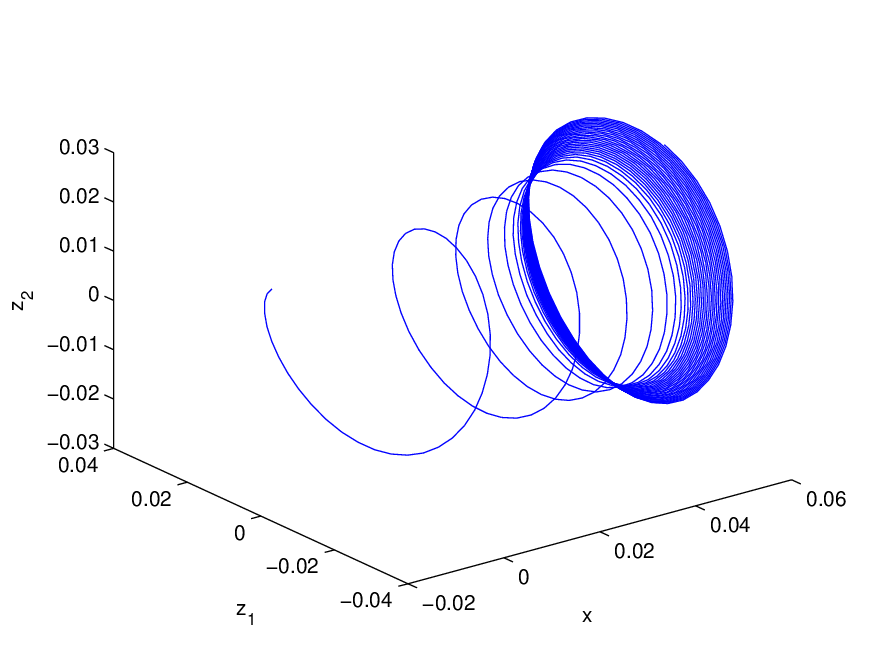}}
\subfigure[\label{C} Region \(C\): An orbit approaching and rotating around the limit cycle \(C_+\).]
{\includegraphics[width=.40\columnwidth,height=.23\columnwidth]{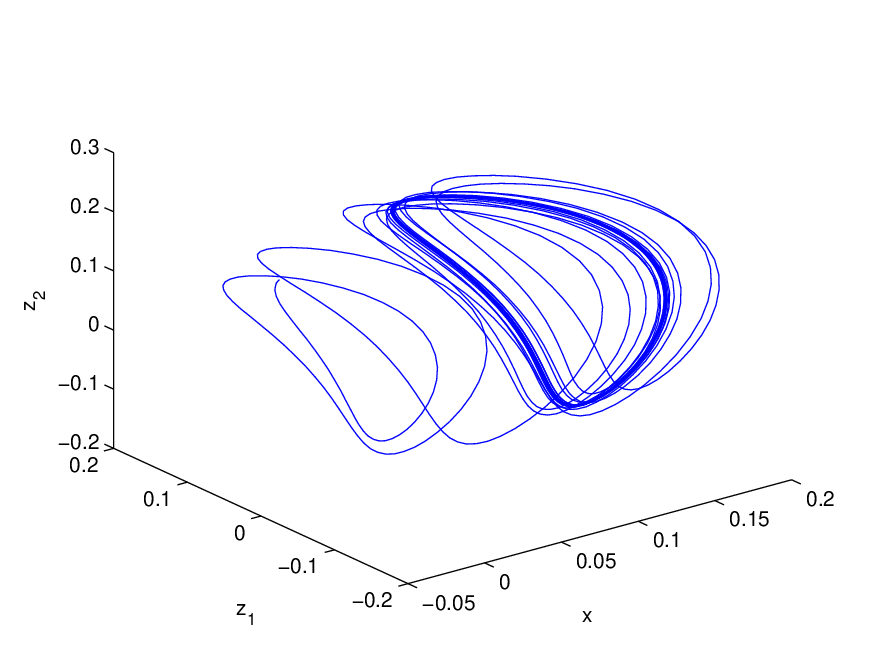}}
\subfigure[\label{D}Region C: An orbit approaching a stable equilibrium.]
{\includegraphics[width=.40\columnwidth,height=.23\columnwidth]{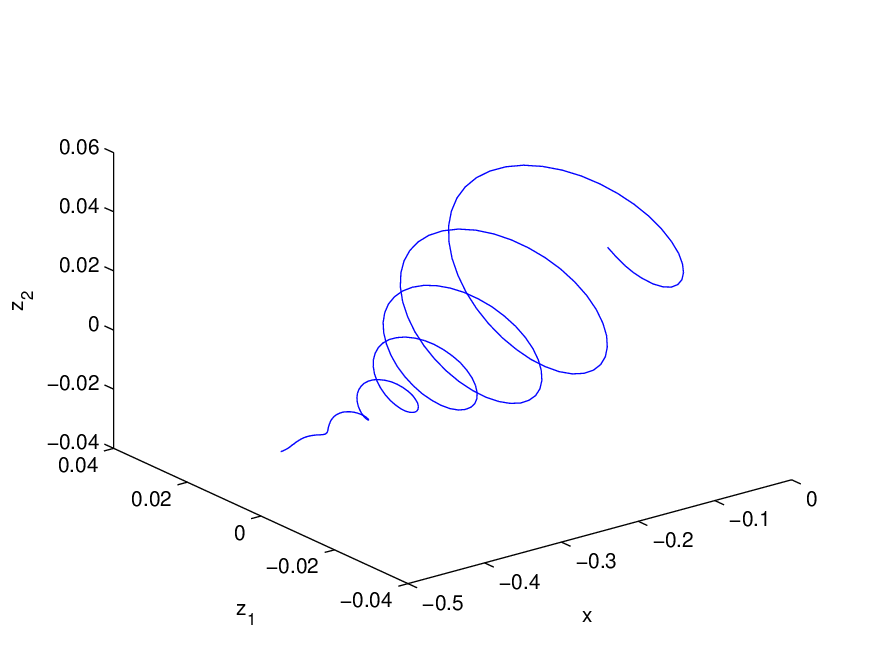}}
\caption{Figures \ref{C} and \ref{D} demonstrate a bistability: two orbits associated with the same parameters from region \(C\) in figure \ref{invTrans} approach to two different attractors. Orbits in \ref{C} and \ref{D} start with \((x:=0.01, z_1:=0.1, z_2:=0.1)\) and \((x:=-0.1, z_1:=0.01, z_2:=0.01),\) respectively. }\label{R2S3Orbits}
\end{center}
\end{figure}

\section*{Acknowledgment}

This research was in part supported by a grant from IPM (No. 94370421).

\end{document}